\newtheorem{lem}{Lemma}
\newtheorem{con}{Conjecture}
\newtheorem{theo}{Theorem}
\newtheorem{prop}{Proposition}
\theoremstyle{remark}
\newtheorem{rem}{Remark}
\numberwithin{equation}{section}
\newcommand{\fd}{\Delta}
\newcommand{\m}{\operatorname{M}}
\newcommand{\M}{\operatorname{M}}
\newcommand{\ch}{\operatorname{q}}
\newcommand{\di}{\operatorname{d}}
\newcommand{\Z}{\mathbb Z}
\newcommand{\h}{\operatorname{H}}
\renewcommand{\mod}{\operatorname{mod}}
\mathchardef\pFcomma=\mathcode`, 
\begin{document}

\title{Lozenge tilings of hexagons with removed core and satellites}

\author[Mihai Ciucu and Ilse Fischer]{\box\Adr}

\newbox\Adr
\setbox\Adr\vbox{ 
\centerline{ \large Mihai Ciucu} \vspace{0.3cm}
\centerline{Indiana University, Department of Mathematics}
\centerline{Bloomington, IN 47401, USA}
\centerline{{\tt mciucu@indiana.edu}} 
\vspace{0.5cm} \centerline{and}  \vspace{0.5cm}
\centerline{ \large Ilse Fischer} \vspace{0.3cm}
\centerline{Universit\"at Wien, Fakult\"at f\"ur Mathematik}
\centerline{Oskar-Morgenstern-Platz 1, 1090 Wien, Austria}
\centerline{{\tt ilse.fischer@univie.ac.at}} 
}

\thanks{The authors acknowledge support by the National Science Foundation, DMS grant 1501052 and Austrian Science Foundation FWF, START grant Y463 and SFB grant F50}

\begin{abstract} We consider regions obtained from 120 degree rotationally invariant hexagons by removing a core and three equal satellites (all equilateral triangles) so that the resulting region is both vertically symmetric and 120 degree rotationally invariant, and give simple product formulas for the number of their lozenge tilings. 
We describe a new method of approach for proving these formulas, and give the full details for an illustrative special case. As a byproduct, we are also able to generalize this special case in a different direction, by finding a natural counterpart of a twenty year old formula due to Ciucu, Eisenk\"olbl, Krattenthaler and Zare, which went unnoticed until now. The general case of the original problem will be treated in a subsequent paper. We then work out consequences for the correlation of holes, which were the original motivation for this study.
\end{abstract}

\keywords{lozenge tilings, plane partitions, determinant evaluations, product formulas, hypergeometric series}

\maketitle

\section{Introduction}

The fact that not only the number of plane partitions that fit in a box (equivalently, lozenge tilings\footnote{ A lozenge is the union of two adjacent unit triangles on the triangular lattice; a lozenge tiling of a lattice region $R$ is a covering of $R$ by lozenges that has no gaps or overlaps.} of a hexagon), but also all the symmetry classes (a total of ten) are given by simple 
\begin{figure}[h]
\centerline{
\hfill
{\includegraphics[width=0.50\textwidth]{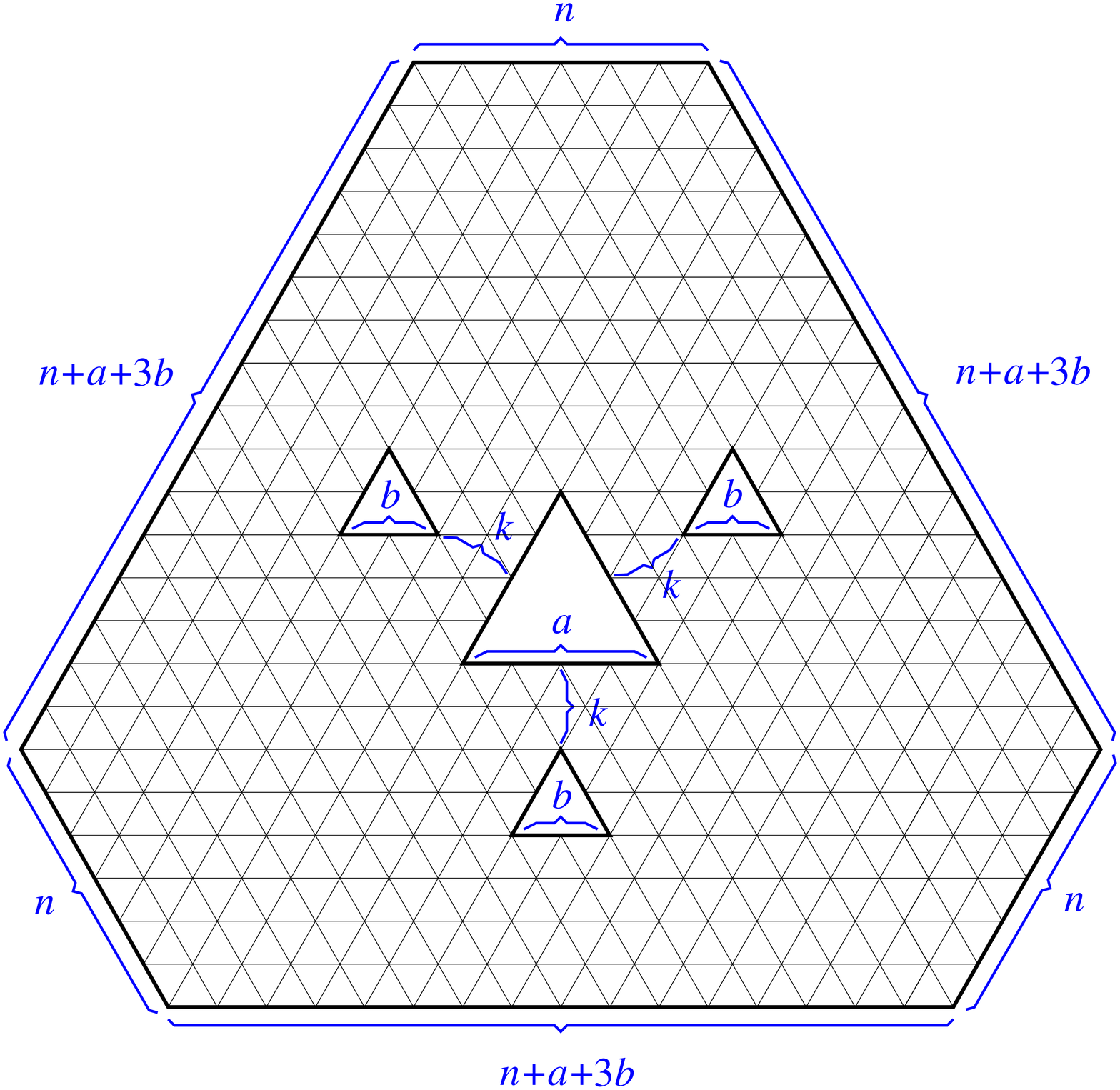}}
\hfill
{\includegraphics[width=0.50\textwidth]{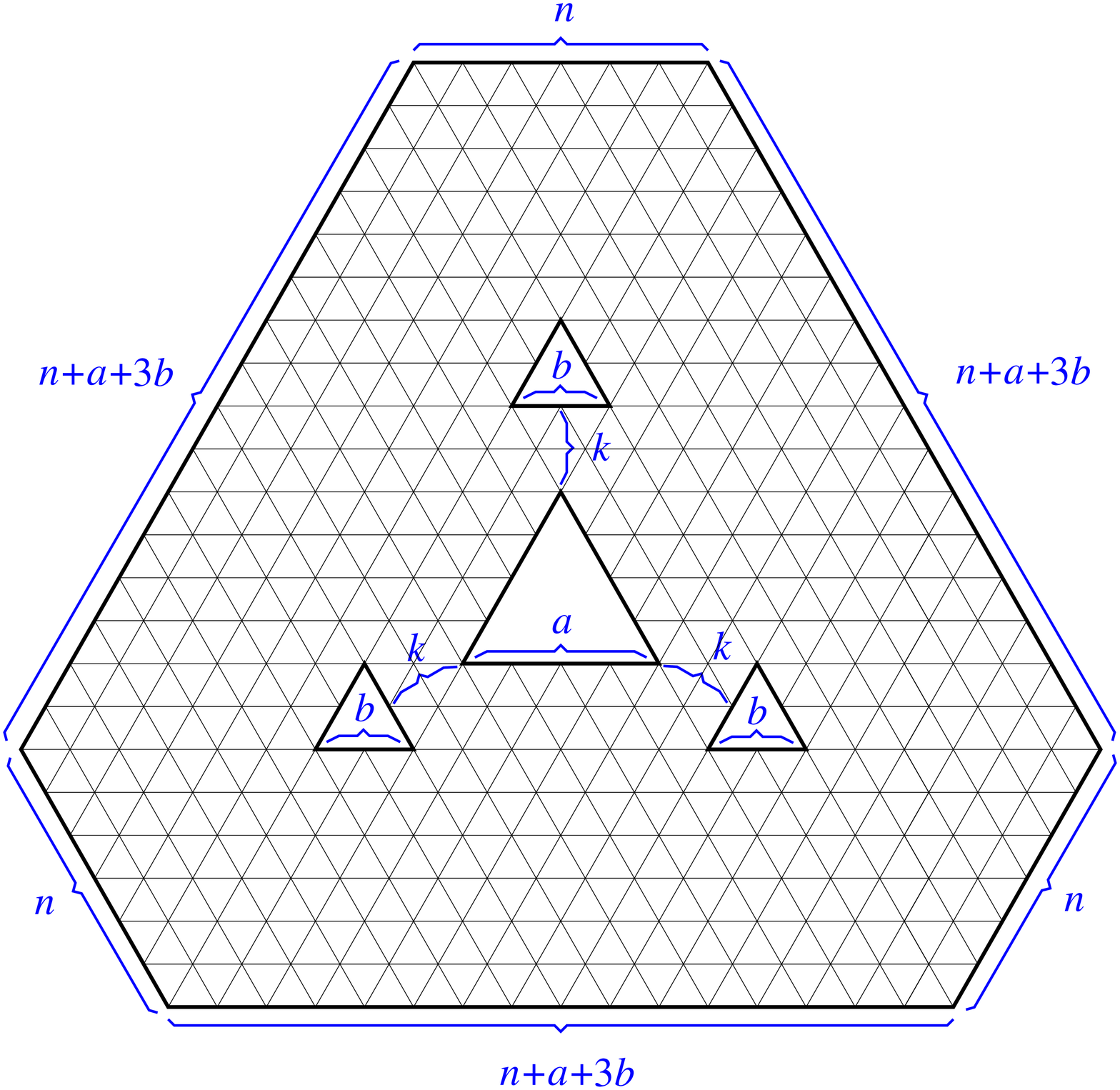}}
\hfill
}
\caption{\label{faa} The region $S_{n,a,b,k}$ (left)  and $S'_{n,a,b,k}$ (right) for $n=6$, $a=4$, $b=2$, $k=1$.}
\end{figure}
product formulas, is of singular beauty in enumerative combinatorics\footnote{ To specify just one of them, MacMahon proved \cite{MacM} --- in an equivalent formulation --- that the number of lozenge tilings of a hexagon of side lengths $a$, $b$, $c$, $a$, $b$, $c$ (in cyclic order) on the triangular lattice is equal to
\begin{equation}
\prod_{i=1}^a\prod_{j=1}^b\prod_{k=1}^c \frac{i+j+k-1}{i+j+k-2}.
\nonumber
\end{equation}
The other nine are only somewhat more complicated.}. This has been a rich source of inspiration for many researchers over the last four decades. Just to skim the surface, we mention \cite{And,StanPP,Kup,Ste,Bres,KKZ} and the survey \cite{kratt} for more recent developments. Works of the first author inspired by this include \cite{ppone,pptwo,sc,ec,ef,ov,ff,fv}. Probabilistic aspects were studied by Cohn, Larsen and Propp \cite{CLP}, Borodin, Gorin and Rains \cite{Borodin}, and Bodini, Fusy and Pivoteau \cite{Bodini}. Another extension was given by Vuleti\'c \cite{Vuletic}. 

\parindent15pt
This paper considers regions obtained from 120 degree rotationally invariant hexagons by removing a core and three equal satellites (all equilateral triangles) so that the resulting region is both vertically symmetric and 120 degree rotationally invariant (Figure \ref{faa} shows the two types of regions that are obtained; see Section 2 for the precise definitions), and gives simple product formulas for the number of their lozenge tilings.

The reader may find interesting the account of how these regions were found.

The special case of these regions when the core is empty was discovered by the first author in 1999, when he noticed that the number of its lozenge tilings seems to always factor fully into relatively small prime factors (such integers are sometimes referred to as ``round'').

This seemed a very hard result to prove (indeed, even guessing the precise product formula seemed exceedingly hard). 
Using the Lindstr\"om-Gessel-Viennot theorem \cite{Lindstr,GesselViennot} it is clearly possible to derive a determinant for the number of lozenge tilings in the case of even size satellites, but
the identification of factors method for evaluating determinants which had proved successful on many occasions before (see e.g. \cite{onethird,CEKZ,cutcor,free,kratt}) was not applicable due to the lack of a polynomial parameter.
Furthermore, it was the odd size that interested the first author most. The reason had to do with \cite{sc}, where he discovered that the distribution of gaps in random lozenge tilings is governed by Coulomb's law of two dimensional electrostatics: \cite{sc} could handle a multitude of even holes but only a single odd hole, and in order to support the conjecture that 2D Coulomb governs the distribution of holes for arbitrary holes (this conjecture was published in \cite{ov}) it was desirable to have an example with three odd holes; the fact that they were not collinear made this instance especially interesting. Having an exact, simple product formula for the number of tilings of the hexagon with three holes, the correlation of the holes can be worked out and its asymptotics determined, confirming thus the above mentioned ``electrostatic conjecture.''

The first author mentioned this observation to Christian Krattenthaler in 2003, and considered briefly a project to attempt proving it,
but the project was abandoned due to the above mentioned complications and limitations. 

An important step ahead was achieved in 2010, when the first author noticed that the round factorization persists if a fourth hole is added at the center. The reason this is so helpful is because it introduces a new parameter in the data, and the counts can be proved to be polynomials in this parameter. Then the data is not just integers that factor into relatively small (but otherwise mysterious) prime factors, but polynomials in the new parameter that factor fully into linear factors. This also gives a more objective measure of ``roundness'' than just factorization of integers into relatively small prime factors.


While the first author was working on enumerating the tilings of these regions in 2014, he showed them to Tri Lai (he was the first author's Ph.D. student at the time), who then in 2017 co-wrote the paper \cite{LR} which involves these regions.
To be precise, \cite{LR}
focuses on counting the lozenge tilings of these regions which are invariant under rotation by $120^\circ$, and also of those which are both vertically symmetric and invariant under this rotation (these follow, after a considerable amount of work, by
applying the factorization theorem \cite{FT} and Kuo condensation \cite{KuoOne}; see \cite{symffa,symffb} for earlier examples).
The straight count of lozenge tilings is
not mentioned in \cite{LR}. However, it turns out (see Conjecture~\ref{tbaa} below) that the straight count and the $120^\circ$-rotationally-invariant count are very closely related!

We are now finally presenting ourselves these regions found many years ago by the first author, and our work on
the problem of counting their lozenge tilings, a question that seems singularly hard in the circle of lozenge tiling problems. This is due to a large extent to the fact that it does not seem possible to extend this family so as to obtain a proof by applying Kuo's graphical condensation method, and also that it does not seem possible to deduce it from other results using standard combinatorial arguments.

It seemed very difficult even to find an explicit conjectural formula for the number of lozenge tilings of these regions, even with the great help that the extra parameter (the size of the core) brought in. The second author succeeded in finding one in 2016, and this is how this collaboration began.

\section{Statement of main results and conjectures}

The regions we present in this paper are hexagons on the triangular lattice\footnote{ Throughout this paper, with the exception of Section 3, we draw the triangular lattice so that one family of lattice lines is horizontal.} with one central and three satellite up-pointing triangular holes so that

\begin{enumerate}
\item[$(i)$] the hexagon with holes is both vertically symmetric and 120 degree rotationally invariant, and
\item[$(ii)$] the gap between each satellite and the core can be bridged by a string of whole lozenges lined up along their long diagonals.
\end{enumerate}

This common description leads to two families of different-looking regions, depending on whether the satellites point towards or away from the core. In the former case, condition $(ii)$ above amounts to the requirement that the side-length of the core is even (see the picture on the left in Figure \ref{faa} for an illustration), while in the latter the side-length of the satellites is required to be even (an example of this is shown on the right in  Figure \ref{faa}).

Assume therefore that $n$, $a$, $b$ and $k$ are non-negative integers with $a$ even, and define $S_{n,a,b,k}$ to be the region obtained from the hexagon $H_{n,n+a+3b}$ of side-lengths $n$, $n+a+3b$, $n$, $n+a+3b$,  $n$, $n+a+3b$ (clockwise from top) by removing a triangle of side $a$ from its center and three satellite triangular holes, each of side $b$, as indicated on the left in Figure \ref{faa} (we emphasize that $k$ is the length of a chain of lozenges that would bridge the gap between each satellite and the core; there are $2k$ lattice spacings between a satellite and the core). For non-negative integers  $n$, $a$, $b$ and $k$ with $b$ even, define $S'_{n,a,b,k}$ to be the region obtained from the same hexagon $H_{n,n+a+3b}$ by removing a triangle of side $a$ from its center and three satellite triangular holes of side $b$ as indicated on the right in Figure \ref{faa} ($k$ has the same significance as in the picture on the left in that figure). For both cases one must have $k\leq n/2$ in order for the satellites to be contained in the region.



Our original interest in these regions (and indeed the reason we found them) came from discovering (see \cite{sc}) that for quite general distributions of even\footnote{ With the exception of one, which could be odd.} triangular holes around the center of a very large hexagon, the number of lozenge tilings of the hexagon with holes varies with the position of the holes precisely\footnote{ In the double limit as first the enclosing hexagon becomes infinite, and then the separation between the holes approaches infinity.} as the exponential of the negative of the 2D electrostatic potential of a naturally corresponding system of electrical charges. This striking observation lended itself to generalization. We needed an example involving non-collinear holes, if possible of either even or odd side-lengths, for which we could work out the needed asymptotics. 

From this point of view, the more interesting family for us is $S_{n,a,b,k}$, as it can have three non-collinear odd charges ($S'_{n,a,b,k}$ can have at most one odd charge, a case already covered by \cite{sc}). The formula for the number of tilings of $S_{n,a,b,k}$ can then be used to determine the asymptotics of the correlation (see \eqref{ebi} for its definition) of the system of its four holes, providing thus the first example in the literature involving large non-collinear odd holes; we work this out in Theorem \ref{tbab} below (collinear holes of arbitrary size on the square lattice were treated in \cite{gd,fin}, and unit holes of arbitrary positions on the square lattice in \cite{Dub}; see also \cite{ec} for arbitrary holes of side two on the triangular lattice, and the extension \cite{gsp} to weighted doubly periodic planar bipartite lattices in the liquid phase of the Kenyon-Okounkov-Sheffield classification \cite{KOS} of the dimer models).


We therefore focus in this paper on the regions $S_{n,a,b,k}$. Analogous results to the ones we present below exist also for the regions $S'_{n,a,b,k}$, but due to the involved nature of the arguments and the fact that the $S_{n,a,b,k}$'s already provide us with the asymptotics we were after in the first place, we do not present them here.

Throughout this paper we define products according to the convention
\begin{equation}
\label{eba}
\prod _{k=m} ^{n-1}\text {\rm Expr}(k)=
\begin{cases}
\prod _{k=m}^{n-1} \text {\rm Expr}(k)&n>m,\\
\hskip0.478in
1&n=m,\\
\dfrac{1}{\prod _{k=n} ^{m-1}\text {\rm Expr}(k)}&n<m.
\end{cases}
\end{equation}

We recall that the Pochhammer symbol $(\alpha)_k$ is defined for any integer $k$ to be 
\[ 
(\alpha)_k:=\prod_{i=0}^{k-1}(\alpha+i),
\]
thus according to \eqref{eba}
\begin{equation}
\label{ebb}
(\alpha)_k:=
\begin{cases}
 \alpha(\alpha+1)\cdots(\alpha+k-1)&\text {if
}k>0,\\
1&\text {if }k=0,\\
1/((\alpha-1)(\alpha-2)\cdots(\alpha+k))&\text {if }k<0.
\end{cases}
\end{equation}
For half-integers $k$, define the Pochhammer symbol $(\alpha)_k$ by
\begin{equation}
\label{ebc}
(\alpha)_k:=\frac{\Gamma(\alpha+k)}{\Gamma(\alpha)}.
\end{equation}

Denote by $\M(R)$ the number of lozenge tilings of the region $R$ on the triangular lattice, and by $\M_r(R)$ the number of its lozenge tilings that are invariant under rotation by 120 degrees. 


Our main goal in this paper is to find a formula for $\M(S_{n,a,b,k})$. When $k=0$, the three satellites touch the core, and due to forced lozenges, removing the core and the three satellites is equivalent (as far as counting lozenge tilings of the leftover region) to removing just a larger core, of side $a+3b$. Therefore, the case $k=0$ follows by the main result of \cite{CEKZ} (see Theorem 1 there).

It turns out that there is a simple relationship between $\M(S_{n,a,b,k})$ and $\M_r(S_{n,a,b,k})$, the number of lozenge tilings of the region $S_{n,a,b,k}$ which are invariant under rotation by 120 degrees.  A formula for the latter was proved by Lai and Rohatgi in \cite{LR}. However, in Theorem 2 we provide a (rather radical) rewriting of their formula\footnote{In the case when $n$ is even; a similar rewriting holds for $n$ odd, but we do not need it in this paper.}, which has the advantage that it works for both even and odd satellite sizes (the original formulas were very different in the two cases; compare equations (2.8) and (2.9) of \cite{LR} with equations (2.10) and (2.11) of \cite{LR}). 


The simplest way to express our formula for $\M(S_{n,a,b,k})$ is to introduce the {\it normalized counts} $\overline{\M}(S_{n,a,b,k})$ and $\overline{\M}_r(S_{n,a,b,k})$ as follows:
\begin{align}
\label{ebca}
\overline{\M}(S_{n,a,b,k})&:=\frac{\M(S_{n,a,b,k})}{\M(S_{n,a,b,0})}\\
\nonumber
\\
\label{ebcb}
\overline{\M}_r(S_{n,a,b,k})&:=\frac{\M_r(S_{n,a,b,k})}{\M_r(S_{n,a,b,0})}
\end{align}

Then our formula for $\M(S_{n,a,b,k})$ follows (using also the two paragraphs above) from the following conjecture. 

\begin{con}
\label{tbaa}
For non-negative integers $n$, $a$, $b$ and $k$ with $a$ even we have
\begin{equation}
\label{ebcc}
\frac{\overline{\M}(S_{n,a,b,k})}{\overline{\M}_r(S_{n,a,b,k})^3}=\left[\prod_{i=1}^k\frac{(a+6i-4)(a+3b+6i-2)}{(a+6i-2)(a+3b+6i-4)}\right]^2.
\end{equation}
\end{con}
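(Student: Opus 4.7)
The plan is to obtain explicit product formulas for both $\overline{\M}(S_{n,a,b,k})$ and $\overline{\M}_r(S_{n,a,b,k})$ separately, and then to verify by direct algebraic manipulation that the conjectured identity \eqref{ebcc} holds. The denominator $\overline{\M}_r(S_{n,a,b,k})$ is supplied by Theorem 2, which unifies into one expression the even- and odd-$b$ cases of the Lai--Rohatgi enumeration \cite{LR}, so that no parity distinction is forced on this side of the identity. The main task is therefore to establish a closed-form product formula for the total count $\M(S_{n,a,b,k})$, which at the $k=0$ boundary must reduce to the result of \cite{CEKZ}.

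To compute $\M(S_{n,a,b,k})$ I would first assume $b$ even. Then the standard Lindstr\"om--Gessel--Viennot setup converts lozenge tilings of $S_{n,a,b,k}$ into families of non-intersecting lattice paths whose endpoints are determined by the horizontal segments along the boundary of the core and satellites, and yields a determinant whose entries are sums of binomial coefficients. The decisive observation, highlighted in the introduction, is that the presence of the core size $a$ as a free parameter makes this determinant a polynomial in $a$ of predictable degree. This opens the door to identification of factors: find specializations of $a$ at which pairs of rows (or columns) of the matrix become linearly dependent, read off the corresponding linear factors of $a$, and match them against the expected product form; once enough factors are accounted for, pin down the overall constant by evaluating at a convenient value such as $a=0$, where \cite{CEKZ} applies. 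For odd $b$ the naive LGV setup fails, and the natural route is to introduce an auxiliary weighted path (or to deform the cut of the region) so that the odd-$b$ case reduces to an analogous determinant amenable to the same treatment, or alternatively to argue via polynomiality in $b$ after establishing the identity for enough even values.

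The main obstacle is the determinant evaluation itself. The introduction openly warns that identification of factors had previously resisted application here due to the absence of a polynomial parameter; even once the core parameter $a$ removes that obstruction, locating sufficiently many linear factors in $a$ and pinning down the leading coefficient appear delicate enough that the paper only promises to carry out the full proof for an illustrative special case, deferring the general evaluation to a subsequent paper. A secondary but nontrivial obstacle is the final simplification step: substituting the two product formulas into the left-hand side of \eqref{ebcc}, one must collapse a long ratio of Pochhammer symbols --- in particular the entire dependence on $n$ must cancel --- down to the compact factor on the right. Here the unified form of Theorem 2 pays off, since a parity split at this stage would double the bookkeeping; the remaining manipulations should amount to standard gamma-function identities.
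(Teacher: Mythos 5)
First, be aware that the statement you are asked to prove is, in the paper, explicitly a \emph{conjecture}: the authors do not prove \eqref{ebcc} there. They describe the same program you sketch (a Lindstr\"om--Gessel--Viennot determinant for even satellite sizes, extension to odd $b$ via polynomiality in the satellite size, a rewriting that makes the entries polynomial in $a$, and then Krattenthaler's identification of factors), carry it out in full only for the special case $b=0$ (Theorem~\ref{newtheo} in Section~\ref{exsec}, with general $n_1,n_2,n_3$), and supply two supporting results for general even $b$ (a determinant of size independent of $n$ and $k$ in Section~\ref{withoutnsec}, and the matching of leading coefficients in $a$ in Section~\ref{leadingsec}); the general case is deferred to a subsequent paper. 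So your proposal coincides in outline with the authors' intended route, but like the paper it stops short of a proof: the decisive step --- exhibiting, for general $b$ and $k$, enough linear factors in $a$ of the determinant, showing the degree bound is attained, and then performing the final cancellation against Theorem~\ref{tbb} --- is exactly what is not done, and merely announcing identification of factors does not close that gap.

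There is also a concrete flaw in the one place where you are specific. You propose to pin down the remaining constant ``by evaluating at a convenient value such as $a=0$, where \cite{CEKZ} applies.'' But \cite{CEKZ} covers the case $k=0$ (satellites touching the core, equivalent to a single enlarged core of side $a+3b$), not the case $a=0$: when $a=0$ with $b,k>0$ the region $S_{n,0,b,k}$ is the hexagon with three free-standing satellites and no core, which is precisely the originally intractable configuration that motivated the whole project, so no known evaluation is available there. Since the factor identification is performed in the variable $a$, the normalization must come from elsewhere; the paper's own workaround is to compare leading coefficients in $a$ (Proposition~\ref{tea}, currently only for even $b$), and in the $b=0$ case the extra evaluation at $a=0$ reduces to MacMahon's box formula --- an option not open to you for $b>0$. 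Your treatment of odd $b$ via polynomiality in the satellite size does match the paper's mechanism (Lemma~\ref{indeplem} and Theorem~\ref{oddb}), but as it stands your argument establishes the conjecture no more than the paper does.
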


While this is still strictly speaking a conjecture, we mention that we do have a new approach to tackle it, which we are confident that will lead to a proof. 
We describe in Sections~\ref{detsec} and \ref{exsec} this new method, which uses the identification of factors method on a particularly convenient determinant, and give the details of the proof for the special case when $b=0$. This case corresponds to the cored hexagons treated in \cite{CEKZ}. As a byproduct, we are able to deal with a different generalization of this special case, which leads us to a new family of regions whose number of lozenge tilings is expressible by a product formula; see Theorem~\ref{newtheo} in Section~\ref{exsec} (in fact, using results from \cite{3b}, this can be generalized; see Remark~\ref{remgeneralize} in Section~\ref{exsec}). The relative briefness of the proof we present here compared to the original proof in \cite{CEKZ} illustrates the advantages of our new method. Some details still need to be worked out for the proof of the general case, which will be presented in a subsequent paper. 

The following two related results are also included in this paper:
\begin{itemize} 
\item In Section~\ref{withoutnsec}, we derive a determinantal formula  for 
$\m(S_{n,a,b,k})$ which is valid for even $b$, with the remarkable feature that the size of the underlying determinant depends neither on $n$ nor on $k$, and it can therefore be used to verify the formula for each particular choice of $a,b$ ($b$ even).
\item In Section~\ref{leadingsec}, we prove, using yet another determinantal expression, that for even $b$, $\M(S_{n,a,b,k})$ and our conjectured expression for it are polynomials in $a$ with the same leading coefficient. 
\end{itemize}

\begin{rem} It is amusing to note that the product on the right hand side of \eqref{ebcc} can be written as
\begin{equation*}
\frac
{\dfrac{\left(\frac{a}{6}+\frac13\right)_k}{\left(\frac{a}{6}+\frac23\right)_k}}
{\dfrac{\left(\frac{a+3b}{6}+\frac13\right)_k}{\left(\frac{a+3b}{6}+\frac23\right)_k}},
\end{equation*}
and that $a$ is the size of the core, while $a+3b$ is the size of the enlarged core arising in the special case $k=0$. It is remarkable that this ratio does not depend on $n$.
\end{rem}

The relationship between the un-normalized counts $\M(S_{n,a,b,k})$ and $\M_r(S_{n,a,b,k})$ is detailed in the following equivalent restatement of Conjecture 1.

\begin{con}
\label{tbab}
For non-negative integers $n$, $a$, $b$ and $k$ with $a$ even we have
\begin{align}
&
\frac{\M(S_{n,a,b,k})}{\M_r(S_{n,a,b,k})^3}
=
\prod_{i=1}^k\left[\frac{(a+6i-4)(a+3b+6i-2)}{(a+6i-2)(a+3b+6i-4)}\right]^2
\nonumber
\\
\nonumber
\\
&
\times
\begin{cases}
 \left[\dfrac{(a+3b+2)_{n/4,6}(a+3b+3n/2+1)_{n/4,6}}
            {(a+3b+4)_{n/4,6}(a+3b+3n/2+5)_{n/4,6}}\right]^2,&
\!\!\!\!\!\!\!\!\!\!\!\!\!\!\!\!\!\!\!\!\!\!\!\!n\ \mathrm{even}
\\
\\
\dfrac{(a+3b+3n+2)^2}{4(a+3b+3(n+1)/2-1)^2} 
\left[\dfrac{(a+3b+2)_{(n+1)/4,6}(a+3b+3(n+1)/2+1)_{(n+1)/4,6}}
            {(a+3b+4)_{(n+1)/4,6}(a+3b+3(n+1)/2+5)_{(n+1)/4,6}} \right]^2,&
\\
\ \ \ \ \ \ \ \ \ \ \ \ \ \ \ \ \ \ \ \ \ \ \ \ \ \ \ \ \ \ \ \ \ \ \ \ \ \ \ \ \ \ \ \ \ \ \ \ \ \ \ \ \ \ \ \ \ \ \ \ \ \ \ \ \ \ \ \ \ \ \ \ \ \ \ \ \ \ \ \ \ \ \ \ \ \ \ \ \ \ \ \ \ \ \ \ \ \ \ \ \ \ 
n\ \mathrm{odd}
\\
\end{cases}
\label{ebd}
\end{align}
where $(\alpha)_{k,m}:= m^k \left( \frac{\alpha}{m} \right)_k$ and the half-integer index Pochhammer symbols are defined by \eqref{ebc}.

\end{con}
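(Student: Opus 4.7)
The plan is to prove Conjecture~\ref{tbab} by reducing it to Conjecture~\ref{tbaa} combined with an explicit evaluation of the $k=0$ case. From the definitions \eqref{ebca} and \eqref{ebcb}, the normalized and un-normalized ratios are related by
$$
\frac{\M(S_{n,a,b,k})}{\M_r(S_{n,a,b,k})^3} \;=\; \frac{\overline{\M}(S_{n,a,b,k})}{\overline{\M}_r(S_{n,a,b,k})^3} \cdot \frac{\M(S_{n,a,b,0})}{\M_r(S_{n,a,b,0})^3},
$$
so it suffices to produce a closed form for $\M(S_{n,a,b,0})/\M_r(S_{n,a,b,0})^3$ and verify that its product with the right-hand side of \eqref{ebcc} equals the right-hand side of \eqref{ebd}.

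For the $k=0$ evaluation I would invoke the observation already made in the paper: when $k=0$ the satellites of $S_{n,a,b,0}$ touch the core and the forced lozenges reduce the region to the cored hexagon $H_{n,n+a+3b}$ with a central up-pointing triangle of side $a+3b$ removed. Its straight count $\M(S_{n,a,b,0})$ is then given by the main theorem of \cite{CEKZ}, while $\M_r(S_{n,a,b,0})$ is given by the corresponding cyclically symmetric enumeration; this is the $k=0$ specialization of the Lai--Rohatgi formula \cite{LR}, and is equivalent to classical results on cyclically symmetric plane partitions with a central triangular hole removed.

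The main obstacle is the algebraic identification: after substituting these product formulas, I must show that the resulting ratio combines with the right-hand side of \eqref{ebcc} to produce the compact expression on the right-hand side of \eqref{ebd}, in particular recognizing the step-$6$ Pochhammer symbols $(\alpha)_{k,6}$. I would rewrite every occurring factor using the generalized Pochhammer conventions \eqref{ebb} and \eqref{ebc}, and then group numerator and denominator factors according to their residues modulo~$6$ in the appropriate shift; the step-$6$ Pochhammers $(a+3b+2)_{n/4,6}$, $(a+3b+4)_{n/4,6}$, and their counterparts should emerge directly from long products of consecutive integers appearing in the CEKZ formula. The split into even-$n$ and odd-$n$ branches is dictated by the parity factor in the CEKZ formula (and in its symmetric version), and the extra boundary prefactor $\frac{(a+3b+3n+2)^2}{4(a+3b+3(n+1)/2-1)^2}$ in the odd-$n$ case should appear as the residue of partial cancellation between numerator and denominator at the edges of the step-$6$ ranges, with the half-integer Pochhammer extension \eqref{ebc} accounting for the non-integer index $(n+1)/4$.

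Since this argument is conditional on Conjecture~\ref{tbaa}, it yields a full proof of Conjecture~\ref{tbab} only in those cases where Conjecture~\ref{tbaa} has been established --- in particular, in the case $b=0$ proved in Sections~\ref{detsec} and~\ref{exsec}; in the remaining cases the derivation above constitutes a rigorous proof of the equivalence of the two conjectures asserted by the authors, so that a proof of Conjecture~\ref{tbaa} (which will appear in a subsequent paper) will automatically yield Conjecture~\ref{tbab}.
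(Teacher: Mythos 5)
The statement you are proving is labeled a \emph{conjecture} in the paper: the authors offer no proof of \eqref{ebd}, they simply present it as an ``equivalent restatement'' of Conjecture~\ref{tbaa}, the equivalence resting on exactly the reduction you describe --- divide by the $k=0$ counts, note that forced lozenges turn $S_{n,a,b,0}$ into a hexagon with a single core of side $a+3b$, and feed in the CEKZ formula for the straight count and the rotationally invariant count (Theorem~\ref{tbb}, i.e.\ the rewritten Lai--Rohatgi formulas) at $k=0$. So your framework coincides with the paper's intended route, and your closing caveat (that this can only establish the equivalence of the two conjectures, plus the cases where Conjecture~\ref{tbaa} is known, such as $b=0$) is the honest and correct assessment of what such an argument can deliver.

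The gap is that the one piece of genuine mathematical content in this reduction is never carried out. Your identity
$$
\frac{\M(S_{n,a,b,k})}{\M_r(S_{n,a,b,k})^3}=\frac{\overline{\M}(S_{n,a,b,k})}{\overline{\M}_r(S_{n,a,b,k})^3}\cdot\frac{\M(S_{n,a,b,0})}{\M_r(S_{n,a,b,0})^3}
$$
is trivial; everything hinges on verifying that $\M(S_{n,a,b,0})/\M_r(S_{n,a,b,0})^3$ equals the bracketed case expression in \eqref{ebd}, i.e.\ that the ratio of the CEKZ cored-hexagon product to the cube of the $k=0$ specialization of \eqref{ebg} (or of the original Lai--Rohatgi formulas) collapses to the step-$6$ Pochhammer ratio $\bigl[(a+3b+2)_{n/4,6}(a+3b+3n/2+1)_{n/4,6}/\bigl((a+3b+4)_{n/4,6}(a+3b+3n/2+5)_{n/4,6}\bigr)\bigr]^2$ for even $n$, and to the same expression with index $(n+1)/4$ times the prefactor $(a+3b+3n+2)^2/\bigl(4(a+3b+3(n+1)/2-1)^2\bigr)$ for odd $n$. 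You only assert that these factors ``should emerge'' from regrouping residues modulo $6$ and that the odd-$n$ prefactor ``should appear'' from boundary cancellations; but this regrouping --- involving half-integer Gamma arguments, the parity split of $n$, and precisely the boundary terms that the odd-$n$ prefactor encodes --- is where all the work (and all the risk of error) lies. As written the proposal is a plan for a computation, not a proof of the claimed equivalence; to make it complete you must perform and exhibit this identification explicitly, for both parities of $n$ (note Theorem~\ref{tbb} covers only even $n$-parameter, so for the odd-$n$ branch you must work directly with the Lai--Rohatgi formulas or their odd-$n$ analogue, which the paper declines to state).
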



%

We note that when the parameter $n$ is even in the above formula, it simplifies (writing the $n$- and $a$-parameters as $2n$ and $2a$, to spell out their evenness) to


%
\begin{equation}
\frac{\M(S_{2n,2a,b,k})}{\M_r(S_{2n,2a,b,k})^3}
=
\left[\dfrac
{\left(\dfrac{a}{3}+\dfrac{1}{3}\right)_k
\left(\dfrac{a}{3}+\dfrac{b}{2}+k+\dfrac{1}{3}\right)_{n/2-k} \left(\dfrac{a}{3}+\dfrac{b}{2}+\dfrac{n}{2}+\dfrac{1}{6}\right)_{n/2}}
{\left(\dfrac{a}{3}+\dfrac{2}{3}\right)_k
\left(\dfrac{a}{3}+\dfrac{b}{2}+k+\dfrac{2}{3}\right)_{n/2-k} \left(\dfrac{a}{3}+\dfrac{b}{2}+\dfrac{n}{2}+\dfrac{5}{6}\right)_{n/2}}
\right]^2.
\label{ebe}
\end{equation}
%


\begin{rem} In the special case when $a=b=0$ --- when our region becomes a regular hexagon $H_n$ of side $n$ --- the two branches of the formula in Conjecture \ref{tbab} unify to give
\begin{equation}
\label{ebfa}
\frac{\M(H_n)}{\M_r(H_n)^3}=\left[\frac
{\left(\dfrac13\right)_n}
{\left(\dfrac23\right)_n}\right]^2,
\end{equation}
a result that readily follows from the well-known formulas for symmetry classes of plane partitions (compare Cases 1 and 3 in \cite{StanPP}).
\end{rem}

Throughout the asymptotic analysis, we will focus on the case when the parameter $n$ is even. This will help keep its length manageable, while capturing the details of the asymptotics of our formulas. Analogous results exist for odd $n$.

We present now our (rather radical) rewriting of the formulas for $\M_r(S_{n,a,b,k})$ found by Lai and Rohatgi \cite{LR} (in line with the previous paragraph, we only treat here the case when the $n$-parameter is even; see also footnote 6). The new form has the advantage that it works for both even and odd satellite sizes (the original formulas were quite different in the two cases; compare equations (2.8) and (2.9) in \cite{LR} with equations (2.10) and (2.11) in \cite{LR}).

We emphasize that products with index limits out of order  are to be interpreted according to the formula we presented at the beginning of Section 2. 

\begin{theo} \cite{LR}
\label{tbb}
Let $n$, $a$, $b$ and $k$ be non-negative integers.

For even $n$ we have
\begin{align}
&
\M_r(S_{2n,2a,b,k})=
\dfrac{\left(\dfrac{a}{2}+\dfrac{k}{2}+\dfrac{1}{2}\right)_k \left(a+2n+\dfrac{3b}{2}+\dfrac12\right)_n}
                             {2^{n^2-n-k^2-k} \left(\dfrac{b}{2}+n-k+\dfrac12\right)_k \left(\dfrac{1}{2}\right)_{n-k}}
\nonumber
\\
&
\times
\left(\dfrac{a}{2}+\dfrac{b}{2}+\dfrac{k}{2}+\dfrac12\right)_k \left(\dfrac{a}{2}+b+n-\dfrac{k}{2}+\dfrac{1}{2}\right)_k 
\left(\dfrac{a}{2}+\dfrac{b}{2}+n-\dfrac{k}{2}+\dfrac12\right)_k
\nonumber
\\
&
\times
\left[
\prod_{i=1}^{n/2-k}\left(a+\dfrac{3b}{2}+3k+2i\right)_i \left(a+\dfrac{3b}{2}+3k+2i-1\right)_{i-1}
\right.
\nonumber
\\
&\ \ \ 
\times
\prod_{i=1}^{k} \left(\dfrac{a}{2}+\dfrac{i}{2}\right)_{i-1}
\prod_{i=1}^{n/2-1}\left(a+\dfrac{3b}{2}+\dfrac{3n}{2}+i+\dfrac12\right)_{2i}
\nonumber
\\
&
\left.
\times
\prod_{i=1}^{n-k-1}\dfrac{1}{\left(\frac{1}{2}\right)_i}
\prod_{i=1}^k \dfrac{\left(a+b+2i+k\right)_{n-i-k} (a+b+2n+k-2i+2)_{b-2k+4i-3}}
                   {\left(\frac{1}{2}\right)_{i} (2i)_{b-1} \left(i+ \frac{b-1}{2}\right)_{n-k}}
\right]^2,
\label{ebg}
\end{align}
%
while for odd $n$ we have
\begin{align}
&
\M_r(S_{2n,2a,b,k})=
\dfrac{\left(\dfrac{a}{2}+\dfrac{k}{2}+\dfrac{1}{2}\right)_k \left(a+2n+\dfrac{3b}{2}+\dfrac12\right)_n}
                             {2^{n^2-n-k^2-k} \left(\dfrac{b}{2}+n-k+\dfrac12\right)_k \left(\dfrac{1}{2}\right)_{n-k}}
\nonumber
\\
&
\times
\left(\dfrac{a}{2}+\dfrac{b}{2}+\dfrac{k}{2}+\dfrac12\right)_k \left(\dfrac{a}{2}+b+n-\dfrac{k}{2}+\dfrac{1}{2}\right)_k 
\left(\dfrac{a}{2}+\dfrac{b}{2}+n-\dfrac{k}{2}+\dfrac12\right)_k
\nonumber
\\
&
\times
\left[
\prod_{i=1}^{(n-1)/2-k}\left(a+\dfrac{3b}{2}+3k+2i\right)_i \left(a+\dfrac{3b}{2}+3k+2i+1\right)_{i}
\right.
\nonumber
\\
&\ \ \
\times
\prod_{i=1}^{k} \left(\dfrac{a}{2}+\dfrac{i}{2}\right)_{i-1}
\prod_{i=0}^{(n-1)/2-1}\left(a+\dfrac{3b}{2}+\dfrac{3n}{2}+i+1\right)_{2i+1}
\nonumber
\\
&\ \ \
\left.
\times
\prod_{i=1}^{n-k-1}\dfrac{1}{\left(\frac{1}{2}\right)_i}
\prod_{i=1}^k \dfrac{(a+b+2i+k)_{n-i-k} (a+b+2n+k-2i+2)_{b-2k+4i-3}}
                   {\left( \frac{1}{2} \right)_{i} (2i)_{b-1} \left(i + \frac{b-1}{2} \right)_{n-k}}
\right]^2.
\label{ebh}
\end{align}



\end{theo}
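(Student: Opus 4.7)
The plan is to verify directly that the unified formulas \eqref{ebg} (for $n$ even) and \eqref{ebh} (for $n$ odd) coincide with the parity-dependent formulas (2.8)--(2.11) of Lai and Rohatgi~\cite{LR}. Since the theorem is credited to them, no enumerative argument is required: the task is purely to establish an algebraic identity between two expressions built from Pochhammer symbols, and the only content is showing that a single formula in $b$ (for each parity of $n$) specializes correctly to each of the two Lai--Rohatgi formulas.

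I would split the verification into four subcases according to the parities of $n$ and of the satellite size $b$. When $b$ is odd, the quantity $(b-1)/2$ is a non-negative integer, so every Pochhammer symbol appearing in \eqref{ebg} and \eqref{ebh} has integer base, and the convention \eqref{eba}--\eqref{ebb} suffices to interpret those whose length $(n-i-k$ or $b-2k+4i-3)$ may be negative; the comparison with (2.10)--(2.11) of \cite{LR} then reduces to routine index shifts and factorizations of the prefactor such as $(1/2)_i = (2i)!/(4^i\, i!)$. When $b$ is even the situation is more delicate: the factors $(b/2+n-k+1/2)_k$ and $(i+(b-1)/2)_{n-k}$ have half-integer base and must be unfolded using \eqref{ebc}. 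I would convert these to ratios of Gamma functions, apply the Legendre duplication formula $\Gamma(z)\Gamma(z+1/2) = 2^{1-2z}\sqrt{\pi}\,\Gamma(2z)$ to recombine them with neighbouring integer-based Pochhammer symbols, and match the resulting product against the corresponding factors in (2.8)--(2.9) of \cite{LR}, absorbing along the way a power of $2$ that contributes to the $2^{n^2-n-k^2-k}$ in the prefactor denominator.

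The main obstacle I expect is bookkeeping. The formulas in \cite{LR} factor the contribution of the strip of forced lozenges between the core and each satellite into two subgroups that look algebraically different (e.g.\ upper-half versus lower-half rows of the strip); to merge these into the single bracketed square in \eqref{ebg} and \eqref{ebh}, I would reindex each subgroup and apply the identity $(\alpha)_m (\alpha+m)_\ell = (\alpha)_{m+\ell}$ to coalesce consecutive Pochhammer symbols, keeping careful track of the parity-dependent powers of $2$ that migrate across the identity via duplication. A useful self-check is that the exponent $n^2-n-k^2-k$ must emerge independently of the parities of $b$ and $n$, and that the final rational factor in the $i=1,\dots,k$ product of \eqref{ebg} and \eqref{ebh} must reproduce, in both the even-$b$ and odd-$b$ reductions, the corresponding piece of the Lai--Rohatgi product accounting for the interaction between the core and the satellites. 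Once the four parity combinations are handled, the theorem follows from \cite{LR}, and the payoff is that \eqref{ebg}--\eqref{ebh} can be combined cleanly with Conjecture~\ref{tbaa} to yield the compact identity \eqref{ebe}.
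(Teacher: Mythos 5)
Your overall route is the right one, and in fact it is the only route available: the paper itself contains no proof of Theorem~\ref{tbb} --- the statement is given with the citation to \cite{LR}, Remark 3 only records how the rewriting was found (patterned on the conjectured formula for $\M(S_{n,a,b,k})$), and when the paper actually needs $\M_r$ later (in the proof of Proposition~\ref{tfa}) it works directly from Lai--Rohatgi's $P_1,P_2,F_1,F_2$ of their equations (2.4)--(2.7) via the factorization $\M_r(S_{n,a,b,k})=2^{n+b}P_1P_2$ (even $b$), not from \eqref{ebg}--\eqref{ebh}. So your proposed case-by-case algebraic verification against \cite{LR} supplies exactly the computation the paper leaves unwritten; you may find it slightly more convenient to verify against the factored forms (2.4)--(2.7), which is the shape of the LR formulas the paper itself manipulates, rather than against (2.8)--(2.11).

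One concrete correction to your plan: the dichotomy ``$b$ odd $\Rightarrow$ all Pochhammer bases are integers, $b$ even $\Rightarrow$ half-integer bases'' is false. Already $\bigl(\tfrac12\bigr)_{n-k}$ and $\bigl(\tfrac12\bigr)_i$ have half-integer base for every $b$, and for odd $b$ the factors $\bigl(a+\tfrac{3b}{2}+3k+2i\bigr)_i$ and $\bigl(a+\tfrac{3b}{2}+3k+2i-1\bigr)_{i-1}$ have half-integer base, as do $\bigl(\tfrac a2+\tfrac k2+\tfrac12\bigr)_k$, $\bigl(\tfrac a2+\tfrac b2+\tfrac k2+\tfrac12\bigr)_k$, $\bigl(\tfrac a2+\tfrac i2\bigr)_{i-1}$, etc., depending on the parities of $a$, $k$ and $i$, which are unconstrained. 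So the Gamma-function conversion \eqref{ebc} together with Legendre duplication is needed in all four of your $(n,b)$-parity cases, and the bookkeeping must also track the parities of $a$, $k$ (and the product index $i$); the convention \eqref{eba}--\eqref{ebb} for negative lengths such as $b-2k+4i-3$ is, as you say, also required throughout. With that adjustment the plan is sound, though of course the theorem's entire content is the verification itself, so the work lies in actually carrying out the four comparisons rather than in any new idea.
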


\begin{rem} It is worth mentioning that originally we worked out the above formulas in the case when $b$ is odd, patterned on the product formula for $\M(S_{n,a,b,k})$ that we discovered; it was this that led us to the expressions in \eqref{ebg} and \eqref{ebh}. An interesting feature of these formulas (which equations (2.8) and (2.9) of \cite{LR} do not possess) is that the expressions on their right hand sides are defined also for even $b$. It is most remarkable --- given how different equations (2.8) and (2.9) of \cite{LR} (which correspond to even $b$) are from equations (2.10) and (2.11) of \cite{LR} (which correspond to odd $b$) --- that for odd values of $b$, the expressions on the right hand sides of \eqref{ebg} and \eqref{ebh} above still give the correct number of $120^\circ$-rotationally-invariant tilings of $S_{2n,a,b,k}$.
\end{rem}

To state our next result, we need to define the correlation of holes in a sea of dimers. The original definition, for two monomers on the square lattice, is due to Fisher and Stephenson \cite{FS}. It was extended to an arbitrary finite collection of holes on the triangular lattice in \cite{sc}. In particular, the correlation of the core and the three satellites is defined as
\begin{equation}
\omega(a,b,k):=\lim_{n\to\infty}\overline{\M}(S_{2n,a,b,k})=\lim_{n\to\infty}\frac{\M(S_{2n,a,b,k})}{\M(S_{2n,a,b,0})}.
\label{ebi}
\end{equation}

We recall that the Barnes $G$-function $G(z)$ is defined for complex $z$ to be
\begin{equation}
G(z+1)=(2\pi)^{z/2}\exp\left(-\frac{z+z^2(1+\gamma)}{2}\right)
\prod_{k=1}^\infty\left\{\left(1+\frac{z}{k}\right)^k \exp\left(\frac{z^2}{2k}-z\right)\right\},
\end{equation}
where $\gamma$ is Euler's constant.

In fact, since in Theorem \ref{tbc} below the argument of $G(z)$ is always either a non-negative integer or a non-negative half-integer, it will be enough for us to know the values of $G$ at such values.

The function $G(z)$ satisfies the recurrence
\begin{equation}
G(z+1)=\Gamma(z)G(z),
\label{ebj}
\end{equation}
and thus for non-negative integers $n$ it is given by
\begin{equation}
G(n)=\prod_{i=0}^{n-2}i!
\label{ebk}
\end{equation}
(we note that for $0\leq n\leq1$, when the product limits are out of order, we use the general convention \eqref{eba} to obtain $G(0)=0$ and $G(1)=1$).

On the other hand, by the recurrence \eqref{ebj}, we have
\begin{equation}
G\left(n+\frac12\right)=G\left(\frac12\right)\Gamma\left(\frac12\right)\Gamma\left(\frac32\right)\cdots\Gamma\left(n-\frac12\right).
\label{ebl}
\end{equation}

All the values we need are then specified by the known fact (see e.g. \cite{Finch}, Section 2.15, p. 136) that
\begin{equation}
G\left(\frac12\right)=\frac{e^{1/8}2^{1/24}}{A^{3/2}\pi^{1/4}},
\label{ebm}
\end{equation}
where $A$ is the Glaisher-Kinkelin constant\footnote{The Glaisher-Kinkelin constant (see \cite{Glaish}) is the value $A$ for which                                
\begin{equation}
\label{GK}
\lim_{n\to\infty}                                                                        
\dfrac                                                                                    
 {0!\,1!\,\cdots\,(n-1)!}                                                                 
 {n^{\frac{n^2}{2}-\frac{1}{12}}\,(2\pi)^{\frac{n}{2}}\,e^{-\frac{3n^2}{4}}}              
=                                                                                         
\dfrac                                                                                    
 {e^{\frac{1}{12}}}                                                                       
 {A}                                                                                      
\end{equation}.}. It is interesting that our results would lead one to guess this very value for $G(1/2)$, had it not already been known (see Remark~\ref{barnes} for a detailed explanation).

We are now ready to state the main asymptotic result of this paper, which is what our hexagonal regions with four holes were designed for.

\begin{theo}
\label{tbc}
Assuming that Conjecture 1 holds, for non-negative integers $a$, $b$ and $k$ with $a$ even we have
\begin{equation}
\omega(a,b,k)\sim
\left[
\frac{G\left(\frac{a}{2}+1\right)}{G\left(\frac{a}{2}+\frac{3b}{2}+1\right)}
\right]^2
\left\{
3^{b^2/4}G\left(\frac{b}{2}+1\right)^{\!\!2}k^{b(a+b)/2}
\right\}^3
,\ \ \ k\to\infty,
\label{ebn}
\end{equation}
where $G$ is the Barnes $G$-function.
\end{theo}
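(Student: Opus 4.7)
The plan is to exploit the remarkable feature of Conjecture~\ref{tbaa} that the ratio $\overline{\M}(S_{n,a,b,k})/\overline{\M}_r(S_{n,a,b,k})^3$ is \emph{independent of $n$}, so that the $n\to\infty$ limit defining $\omega(a,b,k)$ decouples into (i) the explicit Pochhammer product on the right of \eqref{ebcc}, and (ii) the correlation among rotationally invariant tilings, which is governed by the closed formula of Theorem~\ref{tbb}.

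Taking the limit $n\to\infty$ (with $n$ even) in \eqref{ebcc}, together with \eqref{ebca}--\eqref{ebcb} and \eqref{ebi}, yields
\begin{equation*}
\omega(a,b,k)=\omega_r(a,b,k)^3\left[\prod_{i=1}^k\frac{(a+6i-4)(a+3b+6i-2)}{(a+6i-2)(a+3b+6i-4)}\right]^2,\quad \omega_r(a,b,k):=\lim_{n\to\infty}\overline{\M}_r(S_{2n,a,b,k}).
\end{equation*}
As noted in the Remark following Conjecture~\ref{tbaa}, the bracketed product is a ratio of four Pochhammer symbols $(\alpha)_k$; by $\Gamma(k+\alpha)/\Gamma(k+\beta)\sim k^{\alpha-\beta}$ the two $k^{\pm 1/3}$ contributions cancel, so the product converges to an explicit constant given by four Gamma values and contributes no power of $k$ to the asymptotics.

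Next I would compute $\omega_r(a,b,k)$ from the rewriting \eqref{ebg} of Lai--Rohatgi (for even $n$, with $a\mapsto a/2$ to match the $S_{2n,2a,b,k}$ convention of Theorem~\ref{tbb}). In the ratio $\M_r(S_{2n,a,b,k})/\M_r(S_{2n,a,b,0})$ every purely $n$-dependent block can be written as a ratio $(\alpha)_N/(\beta)_N$ for $N$ a linear function of $n$; each such ratio has a finite limit as $n\to\infty$, evaluated again by $\Gamma(N+\alpha)/\Gamma(N+\beta)\sim N^{\alpha-\beta}$ (the $N$-exponents from all blocks must sum to zero, a useful consistency check since $\omega_r$ is finite). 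This yields $\omega_r(a,b,k)$ as a closed-form product in $k$. For the $k\to\infty$ asymptotics I would rewrite the remaining products of Gamma values over arithmetic progressions as ratios of Barnes $G$-values via the recurrence $G(z+1)=\Gamma(z)G(z)$, and apply the standard expansion
\begin{equation*}
\log G(z+1)=\tfrac{z^2}{2}\log z-\tfrac{3z^2}{4}+\tfrac{z}{2}\log(2\pi)-\tfrac{1}{12}\log z+\zeta'(-1)+O(1/z)
\end{equation*}
to extract the leading power of $k$ and the Barnes $G$ constants. The half-integer values of $G$ that arise when $b$ is odd are supplied by \eqref{ebl}--\eqref{ebm}.

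The main obstacle will be the bookkeeping in this last step: tracking how the various Pochhammer blocks of \eqref{ebg}, after the $n\to\infty$ collapse, produce exponents of $k$ that sum precisely to $3b(a+b)/2$, and how the multiplicative constants conspire into $[G(a/2+1)/G(a/2+3b/2+1)]^2\cdot 3^{3b^2/4}\cdot G(b/2+1)^6$. The peculiar factor $3^{3b^2/4}$ is a strong hint that Gauss's triplication formula for $\Gamma$ (and its Barnes $G$ analogue) must be invoked to combine three Pochhammer blocks whose arguments differ by $1/3$ after rescaling; identifying this combination and checking that it yields exactly the stated constant is the delicate part. Multiplying by the constant limit of the bracketed product from Step~2 then gives \eqref{ebn}.
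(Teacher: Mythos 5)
Your proposal is correct and follows essentially the same route as the paper: take $n\to\infty$ in Conjecture~\ref{tbaa} to decouple $\omega(a,b,k)=\omega_r(a,b,k)^3\cdot[\cdots]^2$, evaluate the bracketed product's $k\to\infty$ limit as a ratio of four Gamma values, and obtain the $k\to\infty$ asymptotics of $\omega_r$ (the paper's Proposition~\ref{tfa}) from the closed product formula for the $120^\circ$-invariant count, with the Gamma constants cancelling to leave exactly \eqref{ebn}. The only deviation is that you would compute $\omega_r$ from the unified rewriting \eqref{ebg}, whereas the paper works from the original Lai--Rohatgi expressions and splits into the cases $b$ even and $b$ odd; this is a computational rather than structural difference.
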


\begin{rem}
This proves the electrostatic conjecture mentioned above (in other words, up to the specification of the multiplicative constant, Conjecture \ref{tca}) for the system of holes consisting of the core and the three satellites, achieving this way the original motivating goal of this work (see also the equivalent form \eqref{ech} of \eqref{ebn}).
We discuss in detail in Section 3 the strong evidence \eqref{ebn} provides for Conjecture \ref{tca}
in this special case.
\end{rem}

\begin{rem} 
\label{barnes}
We note that since $a$ is even, when $b$ is also even the values of the Barnes $G$-function in \eqref{ebn} are simply given by \eqref{ebk}. It is most remarkable that \eqref{ebn} holds also for odd $b$, when the right hand side involves the fourth power of the complicated constant  \eqref{ebm}.

In fact, we could have guessed the value of $G(1/2)$ (had it not been known already) from the natural assumption that \eqref{ebn} holds also for odd $b$. Indeed, set $a=0,b=1$ in \eqref{ebn}, and compare the leading coefficient in $k$ on the left hand side (which we obtain explicitly from the asymptotic analysis of our formulas) with the coefficient of the power of $k$ on the right hand side of the thus specialized \eqref{ebn}. Using \eqref{ebj}, this gives a linear equation for $G(1/2)^4$, which leads us precisely to the value in~\eqref{ebm}!
\end{rem} 







The special case of Conjecture \ref{tbaa} when $a=0$, when combined with Theorem 1 of \cite{3b}, affords a product formula for the number of lozenge tilings of a hexagon with a triad of bowties (an arrangement such as pictured in Figure \ref{fba}) removed from its center. We obtain the following result.

\begin{theo}
\label{triadform}
Let $T_{n,k,B,a,b,c}$ be the region obtained from the hexagon whose side-lengths alternate between $n+a+b+c$ and $n+3B-a-b-c$ $($with the top side of length $n+a+b+c$$)$ by removing from its center three bowties in a triad formation as indicated in Figure $\ref{fba}$, where the outer lobe sizes are $a$, $b$, $c$, the inner lobe sizes $B-a$, $B-b$, $B-c$ $($counterclockwise from top$)$, the distance between two bowtie nodes is~$3k+3B-a-b-c$, and the distance between the outer lobes and the facing hexagon sides is $n-k$.  

Then, writing $\langle n\rangle = G(n+1)$ for short,  we have
\begin{align}
&
\frac{\M(T_{n,k,B,a,b,c})}{\M(S_{n,0,B,k})}=
\frac{\langle 3k+B\rangle^3}{\langle 3k\rangle \langle B\rangle^3}
\frac{\langle n+k+B\rangle^3 \langle n-k+2B\rangle^3}{\langle n-2k+B\rangle^3 \langle n+2k+2B\rangle^3}
\nonumber
\\
&
\times
\frac{\langle 3k+3B-a-b-c\rangle^4\langle a\rangle\langle b\rangle\langle c\rangle}
{\langle 3k+3B-a-b\rangle \langle 3k+3B-a-c\rangle \langle 3k+3B-b-c\rangle}
\nonumber
\\
&
\times
\frac{\langle B-a\rangle\langle B-b\rangle\langle B-c\rangle}
{\langle 3k+2B-a-b\rangle \langle 3k+2B-a-c\rangle \langle 3k+2B-b-c\rangle}
\nonumber
\\  
&
\times
\frac{\langle n-2k+a\rangle \langle n+2k+3B-a\rangle}
     {\langle n+k+3B-b-c\rangle \langle n-k+b+c\rangle}
\frac{\langle n-2k+b\rangle \langle n+2k+3B-b\rangle}
     {\langle n+k+3B-a-c\rangle \langle n-k+a+c\rangle}
\nonumber
\\
&
\times
\frac{\langle n-2k+c\rangle \langle n+2k+3B-c\rangle}
     {\langle n+k+3B-a-b\rangle \langle n-k+a+b\rangle}.
\label{triadform}
\end{align}

\end{theo}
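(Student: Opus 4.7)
The plan is to derive this ratio formula as a consequence of Theorem 1 of the companion paper \cite{3b}, which is designed precisely for this configuration: a hexagon with a triad of bowtie-shaped holes arranged with threefold symmetric placement (though not necessarily symmetric lobe sizes) around the center. The natural reference region against which to compare $T_{n,k,B,a,b,c}$ is one obtained by degenerating each bowtie to a single triangular hole. Setting $a=b=c=B$ collapses each inner lobe to a point and leaves three outer triangular lobes of side $B$ at distance $n-k$ from the facing hexagon sides, so this choice of parameters should recover $S_{n,0,B,k}$ and force the ratio to equal $1$; a direct telescoping check confirms that the right-hand side of \eqref{triadform} does collapse to $1$ at $a=b=c=B$ (the first line cancels against factors in the second, third, and fourth lines), which provides a useful sanity check on the bookkeeping.

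First, I would set up the parameter dictionary between our notation and that of \cite{3b}. The key geometric data are the three outer lobe sizes $a, b, c$, the three inner lobe sizes $B-a, B-b, B-c$, the pairwise distance $3k+3B-a-b-c$ between bowtie nodes, and the distance $n-k$ from each outer lobe to its facing hexagon side. These data completely specify the triad placement, and matching them to the conventions of \cite{3b} --- possibly via a change of variables accommodating the hexagon side-lengths $n+a+b+c$ and $n+3B-a-b-c$ --- is a routine unpacking of definitions.

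Next, applying Theorem 1 of \cite{3b} directly yields a closed-form product expression for $\M(T_{n,k,B,a,b,c})$. The same result specialized to $a=b=c=B$ gives a closed form for $\M(S_{n,0,B,k})$. Taking the ratio, the common $n$-dependent bulk factors telescope, leaving the compact expression \eqref{triadform} after rewriting factorials in terms of Barnes $G$-values via $\langle n \rangle = G(n+1) = n!\,G(n)$. The grouping displayed in the statement is already suggestive of the underlying structure: the first line gathers the $a,b,c$-independent factors governing the geometric placement of the triad; the second and third lines isolate contributions from the outer lobes $a, b, c$, the inner lobes $B-a, B-b, B-c$, and their pairwise sums; and the last two lines capture the coupling between each outer lobe and the hexagon side opposite to it.

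The main obstacle I anticipate is not conceptual but purely one of bookkeeping: translating the parameter conventions of \cite{3b} (which may use different notation for lobe sizes, orientations, or distance measurements) into those fixed here, and then carrying out a careful telescoping simplification using the recurrence $G(z+1)=\Gamma(z)G(z)$ to consolidate the resulting products. Once this parameter dictionary is established, no new combinatorial ideas beyond those already contained in \cite{3b} are required. A subsidiary subtlety is verifying that the hypotheses of \cite{3b} (in particular, that the triad actually fits inside the hexagon and that all relevant lobe sizes and distances are nonnegative) translate to natural constraints on $n, k, B, a, b, c$ here, which should reduce to the positivity conditions implicit in the statement.
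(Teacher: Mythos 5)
Your overall strategy --- reduce the ratio formula \eqref{triadform} to Theorem 1 of \cite{3b} via a parameter dictionary, using the degeneration $a=b=c=B$ to identify the comparison region with $S_{n,0,B,k}$ --- is exactly the route the paper intends (the paper gives no independent proof; it simply invokes \cite{3b}), and your check that the right-hand side of \eqref{triadform} collapses to $1$ at $a=b=c=B$ is correct. But there is a genuine gap in your pivotal step: Theorem 1 of \cite{3b} does \emph{not} yield a closed-form product evaluation of $\M(T_{n,k,B,a,b,c})$, nor of $\M(S_{n,0,B,k})$. It is itself a ratio result: it expresses the tiling count of a hexagon with a triad of bowties as an explicit multiplicative factor times the tiling count of the region in which each bowtie is replaced by a single triangular hole of the combined lobe size. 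No unconditional product formula for $\M(S_{n,0,B,k})$ exists --- that is precisely the $a=0$ case of Conjecture \ref{tbaa}, which remains open in this paper; this is also why the present statement is a ratio, and why the paper says the absolute formula for $\M(T_{n,k,B,a,b,c})$ follows only from \eqref{triadform} \emph{together with} Conjecture \ref{tbaa}. If your plan ``get closed forms for numerator and denominator from \cite{3b} and divide'' were executable, the $a=0$ case of Conjecture \ref{tbaa} would already be proved, contradicting the entire framing of the paper; so the step in which you claim both absolute counts and then ``telescope the common $n$-dependent bulk factors'' cannot be carried out.

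The repair is simple and actually shortens the argument: since the reference region in Theorem 1 of \cite{3b} is the merged-triangle triad, the specialization in which all three merged triangles have size $B$ (your degeneration $a=b=c=B$ of $T$) is exactly $S_{n,0,B,k}$, so \cite{3b} hands you the ratio $\M(T_{n,k,B,a,b,c})/\M(S_{n,0,B,k})$ directly as a product of factorials; neither absolute count is ever evaluated, and there are no bulk factors to cancel. What remains is genuinely the bookkeeping you describe: matching the lobe sizes $a,b,c$ and $B-a,B-b,B-c$, the node distance $3k+3B-a-b-c$ and the distance $n-k$ to the conventions of \cite{3b}, verifying the containment and nonnegativity hypotheses, and rewriting the resulting factorial product in Barnes $G$ form via $\langle n\rangle=G(n+1)$ and the recurrence \eqref{ebj} to obtain \eqref{triadform}.
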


\begin{figure}[h]
\centerline{\hfill \includegraphics[width=0.5\textwidth]{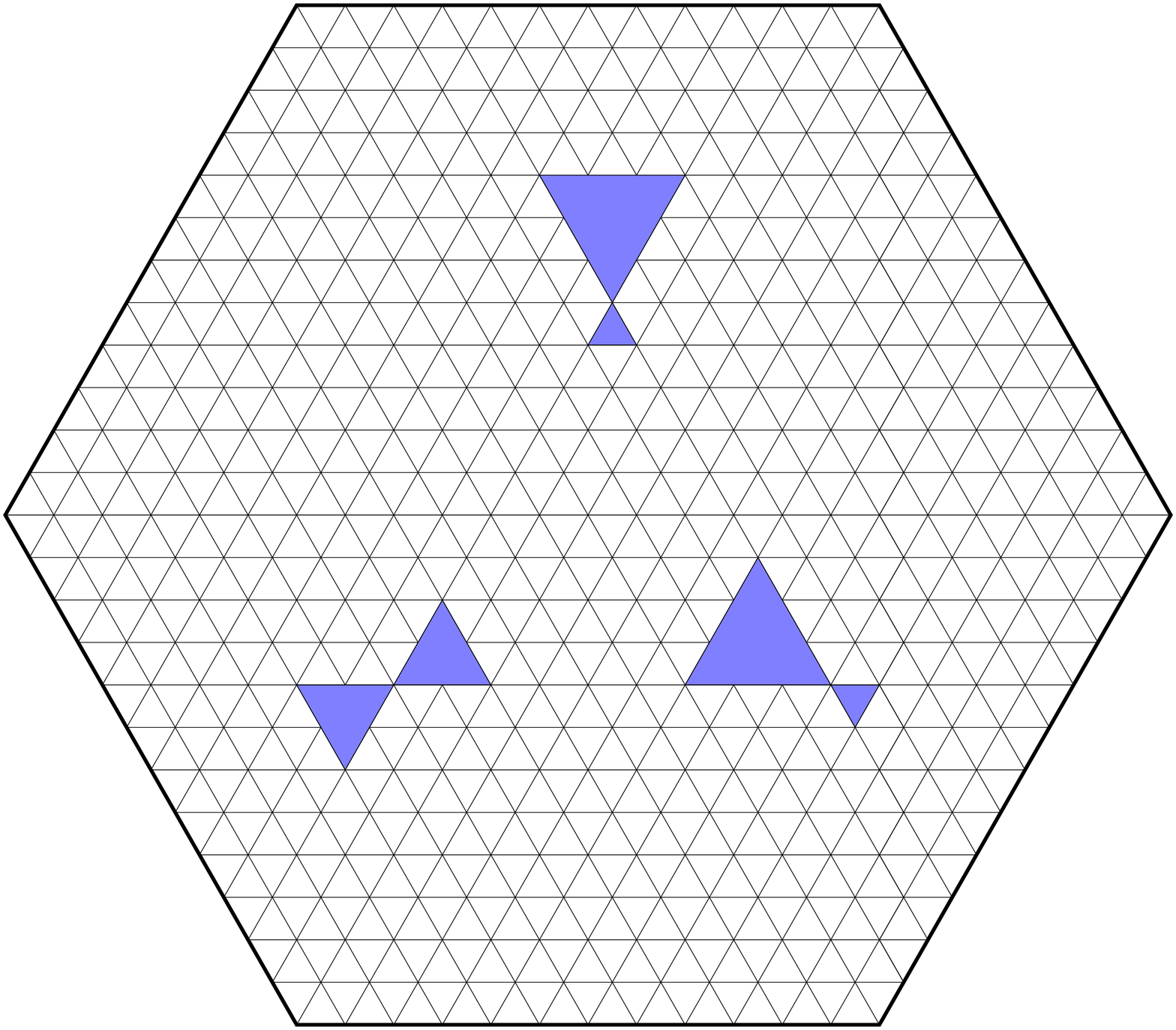} \hfill }
\caption{\label{fba} A triad of bowties at the center of a hexagon.
}
\end{figure}

\begin{rem}
Note that the above result allows in particular to squeeze in completely the outer lobe of any of the three bowties independently, obtaining a triangular satellite of opposite orientation compared to $S_{n,a,b,k}$. This includes the case $a=0$ of the regions $S'_{n,a,b,k}$ shown on the right in Figure~\ref{faa}!
\end{rem}

Asymptotic analysis of the formula for $\M(T_{n,k,B,a,b,c})$ that follows from \eqref{triadform} and Conjecture \ref{tbaa} lets us deduce the following result, the details of whose proof will appear in a subsequent paper.

\begin{theo}
\label{triadcorr}
Consider three bowties $X_1$, $X_2$ and $X_3$ in a triad formation, as shown in Figure $\ref{fba}$. Their outer lobes have sizes $a$, $b$ and $c$, and their inner lobes have sizes $a'$, $b'$ and $c'$, respectively. The distance between the nodes of two bowties is $3k$. Assume  $a+b+c=a'+b'+c'$, and define the correlation $\widetilde\omega(X_1,X_2,X_3)$ using equation \eqref{ecb}.

If Conjecture $\ref{tbaa}$ holds and $a+a'=b+b'=c+c'=B$, writing $\langle n\rangle = G(n+1)$ as before, we have
\begin{align}
&
  \widetilde\omega(X_1,X_2,X_3)\sim  
\frac{3^{B^2/8}}{(2\pi)^{B/2}}\frac{\langle\frac{B}{2}\rangle^2\langle a\rangle \langle a'\rangle}{\langle B\rangle}
\frac{3^{B^2/8}}{(2\pi)^{B/2}}\frac{\langle\frac{B}{2}\rangle^2 \langle b\rangle \langle b'\rangle}{\langle B\rangle}
\frac{3^{B^2/8}}{(2\pi)^{B/2}}\frac{\langle\frac{B}{2} \rangle^2 \langle c\rangle \langle c'\rangle}{\langle B\rangle}
\nonumber
\\[10pt]
&\ \ \ \ \ \ \ \ \ \ \ \ \ \ \ \ \ \ \ \ \ \ \ \ \ \ \ \ \ \ 
\times
(3k)^{-\frac12[(a-a')(b-b')+(a-a')(c-c')+(b-b')(c-c')]}, \ \ \ k\to\infty.
\label{ebo}
\end{align}
\end{theo}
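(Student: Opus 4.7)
The plan is to combine the ratio formula \eqref{triadform} with the asymptotic machinery behind Theorem \ref{tbc} to obtain an explicit product for $\M(T_{n,k,B,a,b,c})$, then divide by the normalizing factor prescribed in \eqref{ecb} for $\widetilde\omega$ and perform a two-stage asymptotic analysis: first send $n\to\infty$ with $k$ fixed, then $k\to\infty$. Under the assumption of Conjecture \ref{tbaa} specialized to $a=0$, the count $\M(S_{n,0,B,k})$ is fully explicit via \eqref{ebcc} together with the main result of \cite{CEKZ} for $\M(S_{n,0,B,0})$ and Theorem \ref{tbb} for $\M_r(S_{n,0,B,k})$. Multiplying by \eqref{triadform} turns this into a fully explicit product for $\M(T_{n,k,B,a,b,c})$; dividing by the relevant MacMahon-type normalization and letting $n\to\infty$ term-by-term (via $\Gamma(n+\alpha)/\Gamma(n+\beta)\sim n^{\alpha-\beta}$) yields a closed-form expression involving only Barnes $G$-factors whose arguments are linear in $k$.

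To pass $k\to\infty$, I would expand each factor $\langle mk+\lambda\rangle=G(mk+\lambda+1)$ via the asymptotic
\[
\log G(z+1)\sim\tfrac{z^2}{2}\log z-\tfrac{3z^2}{4}+\tfrac{z}{2}\log(2\pi)-\tfrac{\log z}{12}+\zeta'(-1),\qquad z\to\infty,
\]
and collect the $k^2\log k$, $k^2$, $k\log k$, $k$, $\log k$ and constant contributions separately. The hypothesis $a+a'=b+b'=c+c'=B$ is the condition that each bowtie has balanced outer/inner lobe ``masses'', and the electrostatic heuristic of \cite{sc} predicts that under this hypothesis every contribution of orders $k^2$ and $k$ to the exponent must cancel, leaving a $\log k$ term (responsible for the power of $3k$) and an $O(1)$ constant (responsible for the three-factor product in \eqref{ebo}). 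I would verify this cancellation explicitly, collecting the coefficient of $\log k$ into $-\tfrac{1}{2}[(a-a')(b-b')+(a-a')(c-c')+(b-b')(c-c')]$ using $a-a'=2a-B$ (and cyclically), and bundling the surviving constant into the symmetric product over the three bowties.

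The main obstacle is the book-keeping required for these cancellations. Many Barnes $G$-factors with arguments of the forms $3k+\lambda$ and $n\pm k+\lambda$ appear in \eqref{triadform}, joined by those from $\M(S_{n,0,B,k})$ and the normalization; one must verify not only that the $k^2$- and $k$-contributions annihilate, but also that the surviving constant splits as the symmetric product over $\{a,a'\},\{b,b'\},\{c,c'\}$ displayed in \eqref{ebo}. I would organize the calculation by first rewriting every Pochhammer/$G$-ratio in terms of the bowtie variables $\{a,B-a\},\{b,B-b\},\{c,B-c\}$, which should make the expected cyclic symmetry manifest, and then reducing to the single-hole-system asymptotics already performed for Theorem \ref{tbc}. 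As a structural sanity check, the constant $3^{B^2/8}/(2\pi)^{B/2}$ contributed per bowtie is the ``square root'' of the corresponding factor $3^{b^2/4}G(\tfrac{b}{2}+1)^2$ appearing in \eqref{ebn}, reflecting that each bowtie is built of two triangular holes rather than one and therefore contributes a half-weight to each Gaussian-type constant---a consistency I would monitor throughout.
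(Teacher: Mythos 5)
Your overall route is the one the paper itself indicates: it gives no proof of Theorem \ref{triadcorr} here (the details are explicitly deferred to a subsequent paper), saying only that the result follows by asymptotic analysis of the formula for $\M(T_{n,k,B,a,b,c})$ obtained from \eqref{triadform} together with Conjecture \ref{tbaa} (with $\M(S_{n,0,B,0})$ from \cite{CEKZ} and $\M_r(S_{n,0,B,k})$ from Theorem \ref{tbb}), followed by Barnes $G$-function asymptotics in the double limit $n\to\infty$, then $k\to\infty$. Your plan reproduces exactly this: assemble the explicit product, take $n\to\infty$ termwise, then expand the $G$-factors in $k$ and check that the $k^2$- and $k$-order contributions cancel under the balancing hypotheses, leaving the stated power of $3k$ and the constant, which indeed should factor as the product of the three single-bowtie constants \eqref{bowtiecorr}. (The shift between ``node distance $3k$'' in Theorem \ref{triadcorr} and ``$3k+3B-a-b-c=3k+\tfrac{3B}{2}$'' in Theorem \ref{triadform} is harmless at leading order, but you should note it.)

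The one genuine gap in your write-up is the normalization step. The correlation $\widetilde\omega(X_1,X_2,X_3)$ in the statement is the \emph{toroidal} correlation of part $(i)$ of the definition, equation \eqref{ecb}: the holes are removed from the torus $T_N$ and one divides by $\M(T_N)$. What your computation produces is the hexagon-based quantity $\lim_{n\to\infty}\M(T_{n,k,B,a,b,c})/\M(H)$, i.e.\ the analogue of $\omega$ in \eqref{ebi}, not $\widetilde\omega$. These two correlations are not interchangeable for free; this is precisely why Section~\ref{consequences} has to introduce Assumption I (and Conjecture \ref{tca}) before converting statements about $\omega$ (Theorem \ref{tbc}) into statements about $\widetilde\omega$. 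So either you must add an argument that for this charge-zero collection the free-boundary hexagon limit coincides with the toroidal limit (this is expected, and for charge-zero systems there are results in \cite{ec} one could try to lean on, but it is not automatic), or you must state the corresponding transfer as an explicit hypothesis. Apart from this missing bridge, the bookkeeping plan is sound, though your closing ``square-root'' heuristic is not quite right as stated: the per-bowtie constant in \eqref{ebo} contains $G\bigl(\tfrac{B}{2}+1\bigr)^{2}$ (the full single-hole constant of charge $B$, corrected by $\langle a\rangle\langle a'\rangle/\langle B\rangle$), not the square root $3^{B^2/8}G\bigl(\tfrac{B}{2}+1\bigr)$ of the bracketed factor in \eqref{ebn}; treat that remark as dispensable rather than as a check the computation must pass.
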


\begin{rem}
As $3k$ is the distance between each pair of bowties, and their charges are $a-a'$, $b-b'$ and $c-c'$, this proves the electrostatic conjecture for a system of three bowties arranged in a triad when $a+a'=b+b'=c+c'$ and $a+b+c=a'+b'+c'$. In fact, the electrostatic conjecture follows even without assuming $a+b+c=a'+b'+c'$. The reason we assumed this condition is because it allows us to compute the multiplicative constant in \eqref{ebo} explicitly. We will use it in Section~\ref{consequences} (see Remark~\ref{turnsout}).
\end{rem}

\section{Consequences for the correlation of holes}
\label{consequences}

In this section we show how Theorem \ref{tbc} can be used to ``bootstrap'' an earlier conjecture of the first author \cite[Conjecture 1]{ov} on the asymptotics of the correlation $\widetilde{\omega}$ of any finite collection $O_1,\dotsc,O_n$ of triangular holes, by specifying explicitly the involved multiplicative constant (see Conjecture \ref{tca} in this section).

To achieve this, we need to discuss some more subtle points involving two other definitions of the correlation of holes, which the first author introduced in \cite{ov}. For convenience we reproduce their definitions below.

\begin{figure}[h]
\centerline{
\hfill
{\includegraphics[width=0.75\textwidth]{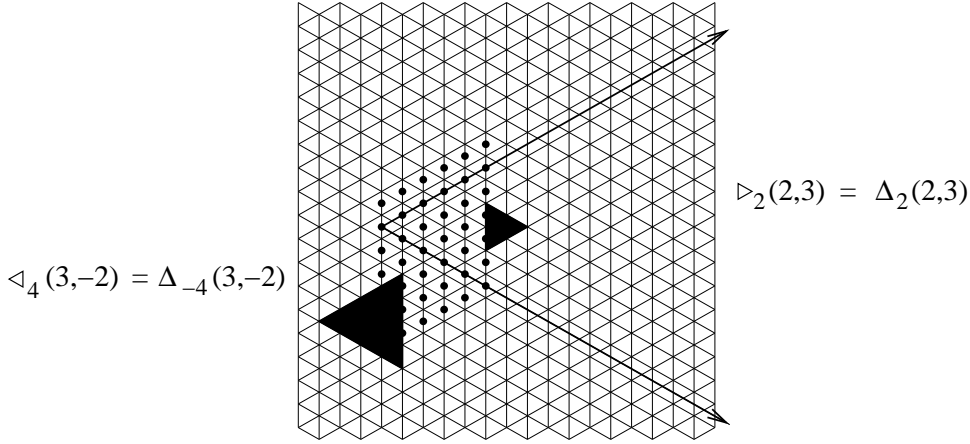}}
\hfill
}
\caption{\label{fca} Marked points in our $60^\circ$ coordinate system; the right $2$-triangular hole $\triangleright_2(2,3)=\triangle_2(2,3)$ and the left $4$-triangular hole $\triangleleft_4(3,-2)=\triangle_{-4}(3,-2)$: see text.}
\end{figure}

Denote the triangular lattice by $\mathcal T$, and draw it (only in this section) so that one family of lattice lines is vertical. Think of the hexagonal lattice $\mathcal H$ as the dual of  $\mathcal T$. Then the vertices of $\mathcal H$ are the unit triangles of $\mathcal T$, and a dimer on $\mathcal H$ is a lozenge. Monomers on $\mathcal H$ are unit triangles of $\mathcal T$; we call them right-monomers and left-monomers according to the direction they point to. Allow  holes in $\mathcal H$ to be arbitrary finite (not necessarily connected) unions of monomers.

Call the midpoints of vertical lattice segments in $\mathcal T$ {\it marked points}, and coordinatize them by 
pairs of integers in a $60^\circ$ coordinate system (see Figure \ref{fca}), by picking one of them to be
the origin, and taking the $x$- and $y$-axes in the polar directions $-\pi/3$ and $\pi/3$, respectively. 
Then each right-monomer is specified by a pair of integer coordinates, and so is each left-monomer.

Define the {\it right $k$-triangular hole} $\triangleright_k(x,y)$ to be the right-pointing triangular hole with a side of length $k$ (the unit being the side-length of a unit triangle) whose topmost marked point (those on its boundary included) has coordinates $(x,y)$; the {\it left $k$-triangular hole} $\triangleleft_k(x,y)$ is defined to be the analogous left-pointing triangular hole. In some instances we will find it convenient to have a unifying notation for these two types of holes. To this end, for $k\in\Z$ we define the {\it $k$-triangular hole} $\triangle_k(x,y)$ by
\begin{equation}
\triangle_k(x,y):= 
\begin{cases}
\triangleright_k(x,y),&{\,\,\text{\rm if $k>0$,}}\\ 
                   \triangleleft_k(x,y),&{\text{\rm if $k<0$}}
\label{eca}
\end{cases}
\end{equation}
(see Figure \ref{fca} for two illustrations).

We define the correlation $\widetilde{\omega}$ of any finite collection $O_1,\dotsc,O_n$ of holes as follows. 

For any positive integer $N$, let $T_N$ be the torus obtained from the rhombus  $\{(x,y): |x|,|y|\leq N-1/2\}$ on $\mathcal T$ by identifying opposite sides. Let the {\it charge} $\ch(O)$ of the hole $O$ be the difference between the number of right- and  left-monomers in $O$. 
By performing a reflection across a vertical lattice line, it suffices to define the correlation when $\sum_{i=1}^n \ch(O_i)\geq0$. 
Define $\widetilde{\omega}$ inductively by:

\parindent4pt
\medskip
$(i)$ If $\sum_{i=1}^n \ch(O_i)=0$, set
\begin{equation}
\widetilde{\omega}(O_1,\dotsc,O_n):=
\lim_{N\to\infty}\frac{\M\left(T_N\setminus O_1\cup\cdots\cup O_n\right)}{M\left(T_N\right)};
\label{ecb}
\end{equation}
$(ii)$ If $\sum_{i=1}^n \ch(O_i)=s>0$, set
\begin{equation}
\widetilde{\omega}(O_1,\dotsc,O_n):=
\lim_{R\to\infty} \frac{R^{s/2} \,\widetilde{\omega}\left(O_1,\dotsc,O_n,\triangleleft_1(R,0)\right)}
{\sqrt{C}},
\label{ecc}
\end{equation}
where the constant $C$ is determined by $\widetilde{\omega}(\triangleright_1(0,0),\triangleleft_1(R,0))\sim C\,R^{-1/2}$, $R\to\infty$.

\parindent12pt
Given a hole $O$ and integers $x$ and $y$, denote by $O(x,y)$ the translation of $O$ under which its topmost (and leftmost, if there are ties) marked point is brought to the point $(x,y)$. In \cite{ov} we presented the following generalization of the Fisher-Stephenson conjecture \cite{FS} on the rotational invariance of the  monomer-monomer correlation (the original Fisher-Stephenson conjecture was recently proved by Dub\'edat \cite{Dub}).


\begin{con} \cite{ov}
\label{tca}
For any hole types $O_1,\dotsc,O_n$ and any distinct pairs of integers $(x_1,y_1)$, $\dotsc$, $(x_n,y_n)$ we have as $R\to\infty$ that
\begin{equation}
\widetilde{\omega}(O_1(Rx_1,Ry_1),\dotsc,O_n(Rx_n,Ry_n))\sim
\prod_{i=1}^n\widetilde{\omega}(O_i)
\prod_{1\leq i<j\leq n}\di((Rx_i,Ry_i),(Rx_j,Ry_j))^{\frac12\ch(O_i)\ch(O_j)},
\label{ecd}
\end{equation}
where $\di$ is the Euclidean distance expressed in units equal to a unit triangle side.

\end{con}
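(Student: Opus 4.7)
The plan is to attack Conjecture~\ref{tca} in two complementary layers. The direct combinatorial layer exploits exact product formulas (as obtained from Conjecture~\ref{tbaa} via Theorem~\ref{tbc}, and from the triad formula of Theorem~\ref{triadform} via Theorem~\ref{triadcorr}) to verify the conjecture for specific families of holes and, crucially, to pin down the multiplicative constants $\widetilde\omega(O_i)$ that appear on the right-hand side of~\eqref{ecd}. The continuum layer then leverages the Gaussian free field description of dimer height functions, in the spirit of Kenyon-Okounkov-Sheffield~\cite{KOS} and Dub\'edat~\cite{Dub}, to propagate these special cases to the full statement.

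Concretely, I would first reduce to the net-neutral case using definition~(ii): if $\sum_i\ch(O_i)\neq 0$, one appends auxiliary far-away unit left-monomers until the total charge vanishes, and the square-root normalizations in \eqref{ecc} are built so that such auxiliary Coulomb factors cancel in the limit. Assuming total charge zero, the correlation is a genuine ratio of tiling counts on the torus, to which the Kasteleyn determinantal description applies. I would then induct on the number of holes. The base case $n=2$ is a Fisher-Stephenson-type statement, established for unit monomers on the square lattice in~\cite{Dub} and for various triangular holes on the triangular lattice in~\cite{sc,ec}. For the inductive step, one separates the farthest hole $O_n$ from the cluster $O_1,\dots,O_{n-1}$ and shows that, as $R\to\infty$, the joint correlation factors as $\widetilde\omega(O_1,\dots,O_{n-1})\cdot\widetilde\omega(O_n)$ multiplied by the required pairwise Coulomb factors $\prod_{i<n} d_{in}^{\frac12\ch(O_i)\ch(O_n)}$.

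The main obstacle is precisely this last factorization, which amounts to an operator product expansion for ``electric'' vertex operators in the free boson describing the height function of the dimer model: each hole $O_i$ should be represented in the continuum by a vertex operator of charge proportional to $\ch(O_i)$, so that multi-point correlations become products of pairwise distances raised to powers $\tfrac12\ch(O_i)\ch(O_j)$, exactly as in~\eqref{ecd}. Making the lattice-to-continuum passage rigorous for holes of arbitrary shape, and identifying the nonuniversal multiplicative constants, is where the exact asymptotics of Theorems~\ref{tbc} and~\ref{triadcorr} become indispensable: they fix $\widetilde\omega$ for the four-hole (core plus three satellites) and three-bowtie configurations respectively, and thus anchor the constants $\widetilde\omega(O_i)$ for all larger systems that can be built inductively by fusing holes together. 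A lurking secondary difficulty is to control the uniformity of the asymptotics in the geometry of the points $(Rx_i,Ry_i)$, which suggests that one wants a version of the continuum argument that is genuinely multi-scale rather than a mere two-point estimate.
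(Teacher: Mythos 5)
The statement you are trying to prove is not a theorem of this paper: Conjecture~\ref{tca} is quoted from \cite{ov} and is left open here. The paper offers no proof of it; it only provides \emph{evidence} in special cases, and in fact runs the logic in the opposite direction from yours --- in Section~\ref{consequences} the authors \emph{assume} Conjecture~\ref{tca} (together with Assumptions I and II) in order to extract the one-hole constants, deriving \eqref{eck}, \eqref{ecq} and ultimately the guess \eqref{ecf}. So there is no ``paper proof'' to match, and your proposal must be judged as a stand-alone argument, which it is not: the step you yourself flag as the ``main obstacle'' --- the asymptotic factorization of the $n$-hole correlation into the $(n-1)$-hole correlation times $\widetilde\omega(O_n)$ times the pairwise Coulomb factors, i.e.\ the lattice-to-continuum operator product expansion for holes of arbitrary shape --- is precisely the entire content of \eqref{ecd}. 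Deferring it to ``the Gaussian free field description'' is not an argument; no rigorous version of that OPE for general triangular-lattice holes exists (Dub\'edat \cite{Dub} treats unit monomers on the square lattice, and \cite{sc,ec} treat special collinear or charge-two configurations), so both your inductive step and your base case $n=2$ for arbitrary hole types are unsupported.

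There are two further concrete problems. First, circularity: you propose to ``anchor'' the constants $\widetilde\omega(O_i)$ using Theorems~\ref{tbc} and~\ref{triadcorr}, but both of those are conditional on Conjecture~\ref{tbaa}, which is itself still unproven in this paper; moreover, converting the hexagon-based correlation $\omega$ of \eqref{ebi} into the torus-based correlation $\widetilde\omega$ requires Assumption~I (proportionality of $\omega$ and $\widetilde\omega$), which is yet another unproven hypothesis, and the paper's own derivation of the values \eqref{ecf} additionally invokes Conjecture~\ref{tca} itself. Second, your reduction to the net-neutral case is not automatic: definition \eqref{ecc} balances the charge by sending auxiliary monomers to infinity along one fixed direction, and the claim that the resulting normalization ``cancels'' independently of the geometry is tantamount to the rotational-invariance content of the conjecture (see the paper's footnote on this point). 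As it stands, the proposal is a reasonable research program but contains no proof of any step beyond what is already conjectural in the literature.
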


The second correlation we need is denoted by $\overline{\omega}$. It is a variant of $\widetilde{\omega}$, but defined only for those collections of holes whose total charge is even. The correlation $\overline{\omega}$ is defined inductively using $(i)$ above, and the modification of $(ii)$ in which $\triangleleft_1(R,0)$ is replaced by $\triangleleft_2(R,0)$ (note that this causes the constant $C$ to be replaced by the leading coefficient $C'$ in the asymptotics of $\overline{\omega}(\triangleright_2(0,0),\triangleleft_2(R,0))$, $R\to\infty$; it turns out that $\overline{\omega}(\triangleright_2(0,0),\triangleleft_2(R,0))\sim \frac{3}{4\pi^2} R^{-2}$, $R\to\infty$, and therefore $C'=\frac{3}{4\pi^2}$).

The special case $q=1$ of \cite[Proposition 2.2]{ec}, stated in terms of the correlation $\overline{\omega}$ (in \cite{ec}
it is phrased in terms of a variant of $\overline{\omega}$, denoted there by $\hat{\omega}$) implies that for non-negative integers~$s$ we have
\begin{equation}
\overline{\omega}(\triangleright_{2s})=\frac{3^{s^2/2}}{(2\pi)^s}\,[0!\,1!\,\cdots\,(s-1)!]^2.
\label{ece}
\end{equation}
%


Based on physical intuition,
it is expected that $\widetilde{\omega}$ agrees with $\overline{\omega}$, and therefore \eqref{ece} is expected to hold with $\overline{\omega}$ replaced by $\widetilde{\omega}$. If we would also know --- at least conjecturally --- the values of the $\widetilde{\omega}(\triangleright_{2s+1})$'s, then we could write down explicitly the multiplicative constant on the right hand side of \eqref{ecd} in the (quite general) special case when $O_i$ is an arbitrary triangular hole (of even or odd side-length, pointing either to the right or to the left), for $i=1,\dotsc,n$.

Based on the experience with Theorem \ref{tbb} (see Remark 3), we could make the daring guess that~\eqref{ece} holds with $\overline{\omega}$ replaced by $\widetilde{\omega}$ {\it also for odd side triangular holes:} As $0!\,1!\,\cdots\,(s-1)!=G(s+1)$, this leads to guessing that
\begin{equation}
\widetilde{\omega}(\triangleright_{k})=\frac{3^{k^2/8}}{(2\pi)^{k/2}}\left[G\left(\frac{k}{2}+1\right)\right]^2, \ \ \ \text{\rm all $k\geq0$.}
\label{ecf}
\end{equation}
%


As it turns out, this daring guess is strongly supported by Theorem \ref{tbc}, as we explain in this section. We therefore formulate the following strengthening of Conjecture \ref{tca} in the case when the holes are arbitrary triangles.

\begin{con}
\label{tcb} 
For arbitrary integers $k_1,\dotsc,k_n$, and any distinct pairs of integers $(x_1,y_1)$, $\dotsc$, $(x_n,y_n)$, we have
\begin{align}
&
\widetilde{\omega}\left(\triangle_{k_1}(Rx_1,Ry_1),\dotsc,\triangle_{k_n}(Rx_n,Ry_n)\right)
\nonumber
\\
&\ \ \ \ \ \ \ \ \ \ \ 
\sim
\prod_{i=1}^n \frac{3^{k_i^2/8}}{(2\pi)^{|k_i|/2}}\left[G\left(\frac{|k_i|}{2}+1\right)\right]^2
\nonumber
\prod_{1\leq i<j\leq n}\di((Rx_i,Ry_i),(Rx_j,Ry_j))^{\frac12 k_i k_j}, \ \ \ R\to\infty.
\label{ecg}
\end{align}

\end{con}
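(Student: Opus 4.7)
The plan is to deduce Conjecture~\ref{tcb} from Conjecture~\ref{tca} combined with an explicit determination of the self-correlations $\widetilde{\omega}(\triangle_k)$ for every integer $k$. Applied to the holes $O_i = \triangle_{k_i}$, Conjecture~\ref{tca} already supplies the claimed factorization into individual correlations times pairwise distance factors raised to powers $\tfrac12 k_i k_j$ (since $\ch(\triangle_k) = k$). Thus what Conjecture~\ref{tcb} adds to Conjecture~\ref{tca} is exactly the single-hole identity \eqref{ecf}, which by reflection across a vertical lattice line reduces to the case $k \geq 0$.

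For even $k=2s$ I would first identify $\widetilde{\omega}(\triangle_{2s})$ with $\overline{\omega}(\triangle_{2s})$, which then equals $\frac{3^{s^2/2}}{(2\pi)^s}[0!\,1!\,\cdots(s-1)!]^2 = \frac{3^{s^2/2}}{(2\pi)^s}[G(s+1)]^2$ by \eqref{ece}. A natural rigorous route for this identification is to evaluate $\widetilde{\omega}(\triangle_{2s},\triangleleft_1(R,0),\triangleleft_1(R',0))$ in two ways: by expanding the two unit left-monomers via the iterative clause $(ii)$ of the definition of $\widetilde{\omega}$, or by coalescing them into a size-$2$ left-triangle as $R \approx R' \to \infty$. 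Consistency of these two evaluations should force the normalization constants $C$ and $C'$ underlying the definitions of $\widetilde{\omega}$ and $\overline{\omega}$ to agree on a single even-charge hole, establishing the desired identification. For odd $k$ I would use Theorem~\ref{tbc} as the source of numerical data: specializing $a=0$ in \eqref{ebn} and comparing with the prediction of Conjecture~\ref{tcb} for three congruent satellites of odd side $b$ at mutual distances comparable to $k$, one obtains (after dividing out the explicit pairwise distance factors supplied by Conjecture~\ref{tca}) a single asymptotic equation for $[\widetilde{\omega}(\triangle_b)]^3$, and hence for $\widetilde{\omega}(\triangle_b)$. Remark~\ref{barnes} illustrates precisely this extraction in the case $b=1$, where one recovers exactly the value \eqref{ebm} of $G(1/2)$. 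Varying even $a>0$ then provides independent consistency checks against the already-established even-$k$ values.

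The main obstacle is proving Conjecture~\ref{tca} itself, since the reduction above relies crucially on the product structure of $\widetilde{\omega}$ for arbitrary non-collinear configurations. None of the standard tools (Kuo condensation, the factorization theorem \cite{FT}, Lindstr\"om--Gessel--Viennot) directly yields this factorization in the required generality. A promising path forward, consistent with the philosophy of Sections~\ref{detsec}--\ref{exsec}, is to enlarge the new determinantal method developed there so that it applies to collections of triangular holes in arbitrary positions, and then to apply steepest-descent or Kenyon--Okounkov--Sheffield-type asymptotic analysis \cite{KOS} to the resulting determinants in order to extract the electrostatic product structure together with its multiplicative constants. Once Conjecture~\ref{tca} is established, the data from Theorem~\ref{tbc} (together with the known even-$k$ values) suffice to pin down all the self-correlation constants, and the even and odd cases merge cleanly --- through the Barnes $G$-function values at half-integers --- into the single uniform formula asserted in Conjecture~\ref{tcb}.
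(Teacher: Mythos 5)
Your overall decomposition matches the paper's: Conjecture~\ref{tcb} is Conjecture~\ref{tca} together with the explicit single-hole values \eqref{ecf}, and the paper (like you) does not prove either statement --- Section~\ref{consequences} only \emph{derives} \eqref{ecf} conditionally, from Conjecture~\ref{tca}, Assumptions I and II, Theorem~\ref{tbc}, and one further input. It is in the odd-$k$ extraction that your proposal has a genuine gap. The correlation in Theorem~\ref{tbc} is the \emph{normalized} one of \eqref{ebi}: the $k=0$ denominator corresponds, via forced lozenges, to the coalesced hole $\triangleright_{a+3b}$. Hence what Theorem~\ref{tbc} plus Conjecture~\ref{tca} and Assumption I(a) yields is equation \eqref{eck}, an asymptotic identity for the \emph{ratio} $\widetilde{\omega}(\triangleright_a)\widetilde{\omega}(\triangleright_b)^3/\widetilde{\omega}(\triangleright_{a+3b})$, not for $[\widetilde{\omega}(\triangleright_b)]^3$ alone. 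Setting $a=0$ with $b$ odd leaves the unknown $\widetilde{\omega}(\triangleright_{3b})$ in the equation, and since $a$ must be even, $a+3b$ is odd for every admissible choice, so varying $a$ never closes the system: each new equation introduces a new odd-hole unknown. Your appeal to Remark~\ref{barnes} does not repair this; that remark concerns recovering the numerical value of $G(1/2)$ by matching leading coefficients, not isolating $\widetilde{\omega}(\triangleright_1)$.

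The paper closes the system with an ingredient your proposal omits: the second family of relations \eqref{ecn}, obtained from the product formula of \cite[Theorem~1.1]{ppone} for a hexagon with one $\triangleright_1$ and $m$ collinear $\triangleright_2$ holes (using Assumption I(b) and Conjecture~\ref{tca}), which expresses $\widetilde{\omega}(\triangleright_1)\widetilde{\omega}(\triangleright_2)^m/\widetilde{\omega}(\triangleright_{2m+1})$ in closed form. Combining the $m=1$ case \eqref{ecp} with the $a=0$, $b=1$ case \eqref{eco} of \eqref{eck}, and inserting $\widetilde{\omega}(\triangleright_2)=\sqrt{3}/(2\pi)$ (which is \eqref{ece} plus Assumption II), eliminates $\widetilde{\omega}(\triangleright_3)$ and pins down $\widetilde{\omega}(\triangleright_1)$ as in \eqref{ecq}; then \eqref{ecn} delivers all $\widetilde{\omega}(\triangleright_{2m+1})$, giving \eqref{ecf} and hence the constant in \eqref{ecg}. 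Without such an independent relation linking odd holes to the known even value $\widetilde{\omega}(\triangleright_2)$, your odd-$k$ constants cannot be extracted as claimed. Your even-$k$ coalescing argument for $\widetilde{\omega}=\overline{\omega}$ is essentially the paper's self-consistency discussion phrased as a computation; as stated it is heuristic, which is acceptable at the conjectural level but should be flagged as an assumption (the paper's Assumption II) rather than presented as forced.
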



We now discuss the supporting evidence for equation \eqref{ecf}. We start by rewriting the statement of Theorem \ref{tbc} in terms of the Euclidean distance between the holes, expressed in units equal to the side-length of a unit triangle. In these units, the distance between the core and each satellite is $k\sqrt{3}$, and the distance between each pair of satellites is $3k$. Denoting the core by $S_0$ and the satellites by $S_i=S_i(k)$, $k=1,2,3$, one readily checks that the statement of Theorem \ref{tbc} can be rewritten as
\begin{equation}
\omega(S_0,S_1,S_2,S_3)\sim
\frac{3^{a^2/8}G\left(\frac{a}{2}+1\right)^{\!2}\left[3^{b^2/8}G\left(\frac{b}{2}+1\right)^{\!2}\right]^{3}}
{3^{(a+3b)^2/8}G\left(\frac{a+3b}{2}+1\right)^{\!2}}
\prod_{0\leq i <j\leq 3} \di(S_i,S_j)^{\frac12 \ch(S_i)\ch(S_j)},
\label{ech}
\end{equation}
in the limit as the satellites recede away from the core at the same rate, where $\di$ is the Euclidean distance expressed in units equal to the side-length of a unit triangle.

In order to make our arguments, we will need two assumptions. The first one consists of two special cases of the assumption that the correlations $\omega$ and $\widetilde{\omega}$ are equal up to a multiplicative factor depending only on the shapes and sizes of the holes, and not on their relative positions.

The first special case we need is that when the collection of holes consists of the core and the three satellites, when it is equivalent to the statement in the first part below. The second is described in the second part.

\medskip
\parindent0pt
{\bf Assumption I.} (a) The ratio $\dfrac{\omega(S_0(k),S_1(k),S_2(k),S_3(k))}{\tilde\omega(S_0(k),S_1(k),S_2(k),S_3(k))}$ does not depend on $k$.

\medskip
(b) Let $m$ be a non-negative integer, and let $\mathcal O$ be the collection consisting of one $\triangleright_1$ and $m$ $\triangleright_2$ collinear holes lined up along a horizontal axis, so that the leftmost of them is the $\triangleright_1$. Then $\dfrac{\omega(\mathcal O)}{\tilde\omega(\mathcal O)}$ does not depend on the relative distances between the holes in the collection  $\mathcal O$.



\parindent15pt
\medskip


\medskip
This is a reasonable assumption, as $\omega$ is defined by placing the holes at the very center of the enclosing hexagons (in the fine mesh limit as the lattice spacing approaches zero, the enclosing hexagon approaches a regular hexagon, with the core and the satellites shrinking to its center), and there the entropy is maximal (cf. \cite{CLP}). The denominator in Equation \eqref{ebi} (resp., for part (b), the denominator in equation (2.2) of \cite{sc}) is a natural choice, but other choices would clearly work, so in the very definition of $\omega$ there is a residing and somewhat arbitrary multiplicative constant.

When taking ratios of correlations, the multiplicative constants cancel out. Therefore, under Assumption I(a) we get
\begin{equation}
\frac{\omega(S_0,S_1(k),S_2(k),S_3(k))}{\omega(S_0,S_1(0),S_2(0),S_3(0))}
=
\frac{\widetilde{\omega}(S_0,S_1(k),S_2(k),S_3(k))}{\widetilde{\omega}(S_0,S_1(0),S_2(0),S_3(0))}.
\label{eci}
\end{equation}
By definition \eqref{ebi}, the denominator on the left hand side above is equal to 1. Thus, equation~\eqref{eci} combined with \eqref{ech}  gives
\begin{equation}
\frac{\widetilde{\omega}(S_0,S_1(k),S_2(k),S_3(k))}{\widetilde{\omega}(S_0,S_1(0),S_2(0),S_3(0))}
\sim
\frac{3^{a^2/8}G\left(\frac{a}{2}+1\right)^{\!2}\left[3^{b^2/8}G\left(\frac{b}{2}+1\right)^{\!2}\right]^{3}}
{3^{(a+3b)^2/8}G\left(\frac{a+3b}{2}+1\right)^{\!2}}
\prod_{0\leq i <j\leq 3} \di(S_i,S_j)^{\frac12 \ch(S_i)\ch(S_j)}.
\label{ecj}
\end{equation}
However, due to forced lozenges at the points of contact of $S_1(0)$, $S_2(0)$ and $S_3(0)$ with the core~$S_0$, the denominator on the left hand side above is equal to $\widetilde{\omega}(\triangleright_{a+3b})$. Therefore, if Conjecture \ref{tca} holds, \eqref{ecj} implies that
\begin{align}
\frac{\widetilde{\omega}(\triangleright_a)\widetilde{\omega}(\triangleright_b)^3}{\widetilde{\omega}(\triangleright_{a+3b})}
&
=
\frac{3^{a^2/8}G\left(\frac{a}{2}+1\right)^{\!2}\left[3^{b^2/8}G\left(\frac{b}{2}+1\right)^{\!2}\right]^{3}}
{3^{(a+3b)^2/8}G\left(\frac{a+3b}{2}+1\right)^{\!2}},
\label{eck}
\\
&
=
\frac{\dfrac{3^{a^2/8}}{(2\pi)^{a/2}}\,G\left(\frac{a}{2}+1\right)^{\!2}\left[\dfrac{3^{b^2/8}}{(2\pi)^{b/2}}\,G\left(\frac{b}{2}+1\right)^{\!2}\right]^{3}}
{\dfrac{3^{(a+3b)^2/8}}{(2\pi)^{(a+3b)/2}}\,G\left(\frac{a+3b}{2}+1\right)^{\!2}},\ \ \ \text{for all}\ 0\leq a,b\in\Z,\ a\ \text{even.}
\nonumber
\end{align}
While strictly speaking not implying \eqref{ecf}, the above equation does strikingly support it.

\medskip
In fact, it turns out that equation \eqref{ecf} is implied by Conjecture \ref{tca} and Assumption I, provided we make one additional assumption, which we describe below.

\begin{rem}
\label{turnsout}
It turns out that Theorem \ref{triadcorr}, Conjecture \ref{tca} and equation \eqref{ecf} imply the following generalization of \eqref{ecf}: For a bowtie $X_{a,a'}$ with lobe sizes $a$ and $a'$, its correlation is given by
\begin{equation}
\widetilde\omega(X_{a,a'})=
\frac{3^{(a+a')^2/8}}{(2\pi)^{(a+a')/2}}\frac{G\left(\frac{a+a'}{2}+1\right)^2G(a+1)G(a'+1)}{G(a+a'+1)}.
\label{bowtiecorr}
\end{equation}

In turn, the above equation, when combined with equation (1.4) of \cite{ff}, yields more generally the correlation of the shamrock $S(a,b,c,m)$ (the structure consisting of an up-pointing triangular core of side $m$ and three down-pointing triangular lobes of sides $a$, $b$ and $c$ touching it at the vertices). We obtain
\begin{equation}
\widetilde\omega(S(a,b,c,m))=
\frac{3^{(a+b+c+m)^2/8}}{(2\pi)^{(a+b+c+m)/2}}
\frac{G\left(\frac{a+b+c+m}{2}+1\right)^2G(m+1)^3G(a+1)G(b+1)G(c+1)}{G(a+m+1)G(b+m+1)G(c+m+1)}.
\label{shamrockcorr}
\end{equation}

Similarly, combining equation \eqref{bowtiecorr} above with equation (1.5) of \cite{fv}, we can find the correlation of the fern $F(a_1,\dotsc,a_k)$ (a string of contiguous triangular lobes of sizes $a_1,\dotsc,a_k$ lined up along a lattice line, alternately oriented up and down). With $a=a_1+\cdots+a_k$, $o:=a_1+a_3+\cdots$ and $e=a_2+a_4+\cdots$, we obtain
\begin{equation}
\widetilde\omega(F(a_1,\dotsc,a_k))=
\frac{3^{a^2/8}}{(2\pi)^{a/2}}
\frac{G\left(\frac{a}{2}+1\right)^2G(o+1)G(e+1)}{G(a+1)}s(a_1,\dotsc,a_k)s(a_2,\dotsc,a_k),
\label{ferncorr}
\end{equation}
where
\begin{align}
&
s(b_1,b_2,\dotsc,b_{2l})=s(b_1,b_2,\dotsc,b_{2l-1})=
\frac{\prod_{\text{\rm $1\leq i\leq j\leq 2l-1$, $j-i+1$ odd}}G(b_i+b_{i+1}+\cdots+b_j+1)}{\prod_{\text{\rm $1\leq i\leq 2l-1$, $j-i+1$ even}}G(b_i+b_{i+1}+\cdots+b_j+1)}
\nonumber
\\[5pt]
&\ \ \ \ \ \ \ \ \ \ \ \ \ \ \ \ \ \ \ \ \ \ \ \ \ \ \ \ \ \ \ \ \ \ \ \ \ \ \ \ \ \ \ \ \ \ \ \ 
\times\frac{1}{G(b_1+b_3+\cdots+b_{2l-1}+1)}.
\label{sdef}
\end{align}

Equations \eqref{shamrockcorr} and \eqref{ferncorr} then naturally extend Conjecture \ref{tcb} to arbitrary collections of shamrocks and ferns. The details will be presented in a subsequent paper.
\end{rem}

We point out that part $(i)$ of the definition \eqref{ecb}--\eqref{ecc} of the correlation $\widetilde{\omega}$ is most natural, but in part $(ii)$ a very specific choice was made about how to handle collections of holes of strictly positive total charge: Namely, to repeatedly send to infinity negative charges of unit magnitude\footnote{ Furthermore, in the definition of  $\widetilde{\omega}$ these auxiliary negative unit charges are always sent to infinity along the polar direction $-\pi/3$. This was chosen for technical reasons, to aid the computations. Due to the expected rotational invariance, the obtained values should be independent of the direction. } until the total charge is reduced to zero, so that part $(i)$ can be used. 

Once a decision is made upon how exactly to balance the total charge (e.g. for a collection of holes of total charge $2k>0$, one way to do the balancing --- the way done in the definition of $\widetilde{\omega}$ --- is to repeatedly send a negative monomer $\triangleleft_1$ to infinity $2k$ times; another way --- corresponding to the definition of $\overline{\omega}$ --- is to 
repeatedly send a $\triangleleft_2$ hole to infinity $k$ times), we claim that there is a unique choice for the value of $C$ at the denominator on the right hand side of~\eqref{ecc} that gives a chance for Conjecture~\ref{tca} to hold.

We justify this claim for the two cases of a $\triangleleft_1$ or a $\triangleleft_2$ being sent to infinity (these are the only instances we need in our arguments below; the general case is handled the same way). For the case of a $\triangleleft_1$, the claim follows by considering in $(ii)$ the special case when $n=1$ and $O_1=\triangleright_1$. Indeed, then \eqref{ecc} becomes
\begin{equation}
\widetilde{\omega}(\triangleright_1)=\widetilde{\omega}(\triangleright_1(0,0)):=
\lim_{R\to\infty} \frac{R^{1/2} \,\widetilde{\omega}\left(\triangleright_1(0,0),\triangleleft_1(R,0)\right)}
{\sqrt{C}}.
\label{ecl}
\end{equation}
If we want to end up with a correlation for which Conjecture \ref{tca} holds, then we must have
\begin{equation}
\widetilde{\omega}\left(\triangleright_1(0,0),\triangleleft_1(R,0)\right)\sim
\widetilde{\omega}\left(\triangleright_1(0,0)\right)\widetilde{\omega}\left(\triangleleft_1(R,0)\right) R^{-1/2},\ \ \ R\to\infty.
\label{ecma}
\end{equation}
Clearly, \eqref{ecl} and \eqref{ecma} give (using also $\widetilde{\omega}\left(\triangleleft_1\right)=\widetilde{\omega}\left(\triangleright_1\right)$) that $\widetilde{\omega}(\triangleright_1)=\sqrt{C}$. Combined with~\eqref{ecma}, this gives
\begin{equation}
\widetilde{\omega}\left(\triangleright_1(0,0),\triangleleft_1(R,0)\right)\sim
C R^{-1/2},\ \ \ R\to\infty,
\label{ecm}
\end{equation}
which determines $C$ uniquely, as claimed, to be the value we used in the definition of $\widetilde{\omega}$. The case of $\overline{\omega}\left(\triangleright_2\right)$ is justified the same way, leading to the unique choice of $C'$ used in the definition of $\overline{\omega}$.

Our second assumption is a special case of what we could call self-consistency: That all the different possible ways to balance a given collection of holes in part $(ii)$ of the definition lead to the same value of the correlation, provided the denominator on the right hand side of \eqref{ecc} is always chosen to have the unique value determined by the statement of Conjecture  \ref{tca}. In fact, we only need this for our two correlations $\widetilde{\omega}$ and $\overline{\omega}$, and only for a single triangular hole of side two.

\parindent0pt
\medskip
{\bf Assumption II.} $\overline{\omega}(\triangleright_{2})=\widetilde{\omega}(\triangleright_{2})$.

\parindent15pt
\medskip

There is one more result on the asymptotics of the correlation $\omega$ that we need, which follows from the product formula of \cite[Theorem 1.1]{ppone} by the same reasoning that derived Theorem \ref{tbc} from the formula in Conjecture \ref{tbaa}. Making the same arguments that led to \eqref{eck} (i.e. assuming that Conjecture \ref{tca} and Assumption I(b) hold), we obtain
\begin{equation}
\frac{{\widetilde\omega}(\triangleright_1){\widetilde\omega}(\triangleright_2)^{m}}{\widetilde{\omega}(\triangleright_{2m+1})}
=
\frac{3^{1/8}G\left(\frac{3}{2}\right)^{\!2}\left[3^{1/2}G\left(3\right)^{\!2}\right]^{m}}
{3^{(2m+1)^2/8}G\left(\frac{2m+1}{2}+1\right)^{\!2}},\ \ \ \text{for all}\ m\geq0.
\label{ecn}
\end{equation}
%
%

\medskip
{\it Deducing the value of $\widetilde{\omega}(\triangleright_{1})$.} Consider equation \eqref{eck} (which recall follows from Theorem~\ref{tbc}, provided Conjecture \ref{tca} and Assumption I(a) hold), and set $a=0$ and $b=1$. Using $\widetilde{\omega}\left(\triangleright_0\right)=1$ (which follows from the definition of $\widetilde{\omega}$), the recurrence \eqref{ebj} and the fact that $\Gamma(3/2)=\sqrt{\pi}/2$, we obtain 
\begin{equation}
\frac{\left[\widetilde{\omega}\left(\triangleright_1\right)\right]^3}{\widetilde{\omega}\left(\triangleright_3\right)}
=
\frac{4}{3^{3/4}\pi}G\left(\frac{3}{2}\right)^{\!4}.
\label{eco}
\end{equation}
%
%
%
%
%
%

\begin{figure}[h]
\centerline{
\hfill
{\includegraphics[width=0.4\textwidth]{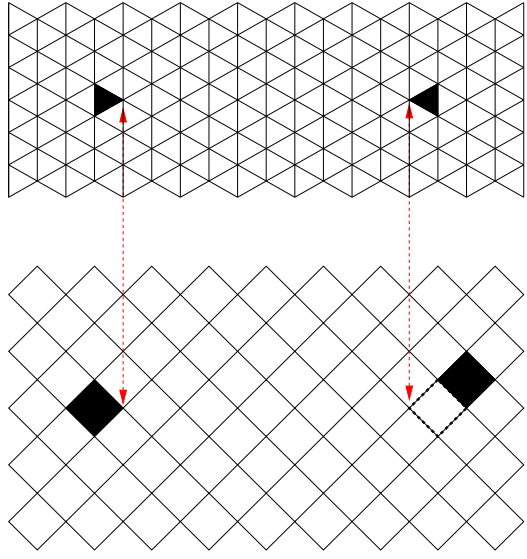}}
\hfill
}
\caption{\label{fcb} Squaring the hexagonal lattice: If the removed unit triangles and the removed unit squares are lined up as shown, having $d$ long lozenge diagonals, respectively $d$ unit square diagonals in between, then their correlations decay to zero asymptotically like $c/\sqrt{d}$ with the same value of $c$ (namely $c=e^{1/2}2^{-5/6}A^{-6}$, $A$ being the Glaisher-Kinkelin constant), where $d$ is the Euclidean distance between the removed monomers, measured on the triangular lattice in units equal to a long lozenge diagonal, and on the square lattice in units equal to a unit square diagonal. Phrased in terms of monomer correlations in a sea of dimers, as the dual of the triangular lattice is the hexagonal lattice (while the square lattice is self-dual), this shows how to calibrate the size of the hexagonal lattice against the square lattice so that the monomer-monomer correlations decay identically.}
\end{figure}

\parindent0pt
On the other hand, setting $m=1$ in \eqref{ecn}, we get
\begin{equation}
\frac{{\tilde\omega}\left(\triangleright_1\right){\tilde\omega}\left(\triangleright_2\right)}
{\widetilde{\omega}\left(\triangleright_3\right)}
=
\frac{4}{3^{1/2}\pi}.
\label{ecp}
\end{equation}
By \eqref{ece} and Assumption II, ${\widetilde\omega}\left(\triangleright_2\right)=\sqrt{3}/(2\pi)$. 
Thus, combining equations \eqref{eco} and \eqref{ecp}, we get 
\begin{equation}
\widetilde{\omega}\left(\triangleright_1\right)=\frac{3^{1/8}}{\sqrt{2\pi}}G\left(\frac{3}{2}\right)^{\!2}. 
\label{ecq}
\end{equation}

\parindent15pt
\medskip
{\it Deducing the values  $\widetilde{\omega}(\triangleright_{2m+1})$.} Having determined the value of $\triangleright_1$, the value of $\widetilde{\omega}(\triangleright_{2m+1})$ for any positive integer $m$ follows directly from \eqref{ecn}, using again that (by \eqref{ece} and Assumption II) $\widetilde{\omega}\left(\triangleright_2\right)=\sqrt{3}/(2\pi)$. This leads to \eqref{ecf}, and thus to the explicit multiplicative constant in Conjecture~\ref{tcb}.

\medskip
We end this section with a pretty astounding way of relating the hexagonal and square lattices from the point of view of the rate of decay to zero of the the monomer-monomer correlation. This is afforded by comparing the value of $\widetilde{\omega}(\triangleright_{1})$ derived above to the analogous constant for the square lattice, which was determined by Hartwig \cite{Hart} in 1966.

Hartwig showed in \cite{Hart} that 
\begin{equation}
\omega(\square_{0,0},\square_{d,d+1})\sim\frac{e^{1/2}}{2^{\tfrac56} A^6}\, d^{-1/2}                  
,\ \ \ d\to\infty,
\label{ecs}
\end{equation}

\parindent0pt
where $\square_{p,q}$ denotes the unit square on the square lattice whose bottom left corner has coordinates $(p,q)$, and the correlation $\omega$ on the square lattice is defined in analogy to \eqref{ebi}, using large squares centered at the origin to enclose the monomers.

\parindent15pt
On the other hand, equation \eqref{ecm}, together with $\widetilde{\omega}(\triangleright_{1})=\sqrt{C}$ and the value for $\widetilde{\omega}(\triangleright_{1})$ derived in \eqref{ecq}, gives
\begin{equation}
\widetilde{\omega}\left(\triangleright_1(0,0),\triangleleft_1(d,0)\right)\sim
\frac{3^{1/4}}{2\pi}\,G\!\left(\frac{3}{2}\right)^{\!4}
d^{-1/2},\ \ \ d\to\infty.
\label{ect}
\end{equation}
By the recurrence \eqref{ebj} and the value \eqref{ebm} of $G(1/2)$, we have
\begin{equation}
G\left(\frac32\right)=\frac{2^{1/24}e^{1/8}\pi^{1/4}}{A^{3/2}},
\label{ebu}
\end{equation}
and \eqref{ect} becomes 
\begin{equation}
\widetilde{\omega}\left(\triangleright_1(0,0),\triangleleft_1(d,0)\right)\sim
\frac{3^{1/4}e^{1/2}}{2^{5/6}A^6}\,
d^{-1/2},\ \ \ d\to\infty.
\label{ecv}
\end{equation}

By Conjecture \ref{tca} we should have
\begin{equation}
\widetilde{\omega}\left(\triangleright_1(0,0),\triangleleft_1(d,d)\right)\sim
\frac{3^{1/4}e^{1/2}}{2^{5/6}A^6}\,
(d\sqrt{3})^{-1/2}
=\frac{e^{1/2}}{2^{5/6}A^6}\,d^{-1/2}
,\ \ \ d\to\infty,
\label{ecw}
\end{equation}
because the Euclidean distance between $\triangleright_1(0,0)$ and $\triangleleft_1(d,d)$ is $d\sqrt{3}$, expressed in units equal to a unit triangle side.

The agreement of the multiplicative constants in \eqref{ecs} and \eqref{ecw} is most unexpected. 
Note that in \eqref{ecs} the distance between the removed unit squares is $d$ unit square diagonals, and the distance between the removed unit triangles in \eqref{ecw} is $d$ long lozenge diagonals. Therefore, the agreement of the right hand sides in \eqref{ecs} and \eqref{ecw} has the following interpretation: If the triangular lattice is scaled so that the lengths of a long lozenge diagonal matches the length of a unit square diagonal on the square lattice (see Figure \ref{fcb}), then the monomer-monomer correlations on these two lattices decay to zero at precisely the same rate. Since unit holes in  lozenge tilings are equivalent to monomers in dimer systems on the hexagonal lattice, we can view this agreement as specifying how to scale the hexagonal lattice against the square lattice in order to get precisely the same decay --- squaring the hexagonal lattice, as it were.

%
%
%

%
%
%
%
%
%


\section{Determinantal formula for $\m(S_{n,a,b,k})$} 
\label{detsec}

The purpose of this section is to derive a convenient determinantal formula for $\m(S_{n,a,b,k})$. 
This derivation is divided into the following steps according to the four subsections.
\begin{enumerate}
\item First we use the Lindstr\"om-Gessel-Viennot theorem \cite{Lindstr,GesselViennot} to derive a determinantal formula for the number of lozenge tilings of $S_{n,a,b,k}$ \emph{assuming that $b$ is even}. This is standard, however, we introduce a notation that will be useful in the following. Also, for what follows, we need a more general setting, where the sizes of the three satellites are independent integers $b_1,b_2,b_3$.
\item Next we show that the number of lozenge tilings in this more general setting is for each 
$i \in \{1,2,3\}$ a polynomial in $b_i$ when fixing the other $b_j$'s, $n$, $a$, and $k$. Here we employ arguments that have been used in \cite[Section~6]{CEKZ}.
\item Then we use this polynomiality to modify the determinantal formula from the first step so that it gives the correct values also if $b$ is odd.
\item Finally, we modify the determinant further such that it reveals the polynomiality in $a$ (so that $a$ does not appear in the size of the matrix and all matrix entries are polynomials in $a$). This is necessary to be able to apply the identification of factors method, see \cite[Sec. 2.4]{KrattDet}.
\end{enumerate}

\subsection{Trapezoids with triangular holes}

For positive integers $n,l$, we refer to the isosceles trapezoid whose longer base is of length $l$, whose legs are of length $n$ and with lower base angles $60^\circ$ as an $(n,l)$-trapezoid. The $(11,16)$-trapezoid is given in Figure~\ref{exampletrapez}. If we draw such a trapezoid on the triangular lattice in the usual way so that the longer base is horizontal and below the shorter base, and the vertices are lattice points, then the trapezoid has $n$ more up-pointing unit triangles than down-pointing unit triangles. Hence such a trapezoid does not have a lozenge tiling, but may have one if we remove $n$ up-pointing unit triangles from it. 

As indicated in Figure~\ref{exampletrapez}, such lozenge tilings correspond to families of non-intersecting lattice paths where the starting points are arranged along the left leg of the trapezoid, while the end points are situated at the centers of the $/$-sides of the removed triangles (which are the black triangles in our example). If we number the starting points from bottom to top with $1$ to $n$ and fix also a numbering of the removed triangles from $1$ to $n$, such a family of non-intersecting lattice paths induces in a natural way a permutation of $1,2,\ldots,n$. The sign of this permutation is said to be the \emph{sign }of the lozenge tiling. In our example, numbering the removed triangles from bottom to top and within a row from left to right gives the permutation $1 \, 2 \, 3 \, 4 \, 6 \, 7 \, 8 \, 9 \, 5 \, 10 \, 11$. 

We say that the set of removed triangles is \emph{even} if each such triangle that is not situated in the bottom row is contained in a maximal (connected) horizontal chain of removed triangles that is of even length. The set of removed triangles in Figure~\ref{exampletrapez} is even. Under this assumption, all lozenge tilings have the same sign. Finally note that removing a horizontal chain of, say, $n$ unit triangles is equivalent (due to forced lozenges) to removing an up-pointing triangle of size $n$.

\begin{figure}
\scalebox{0.4}{\includegraphics{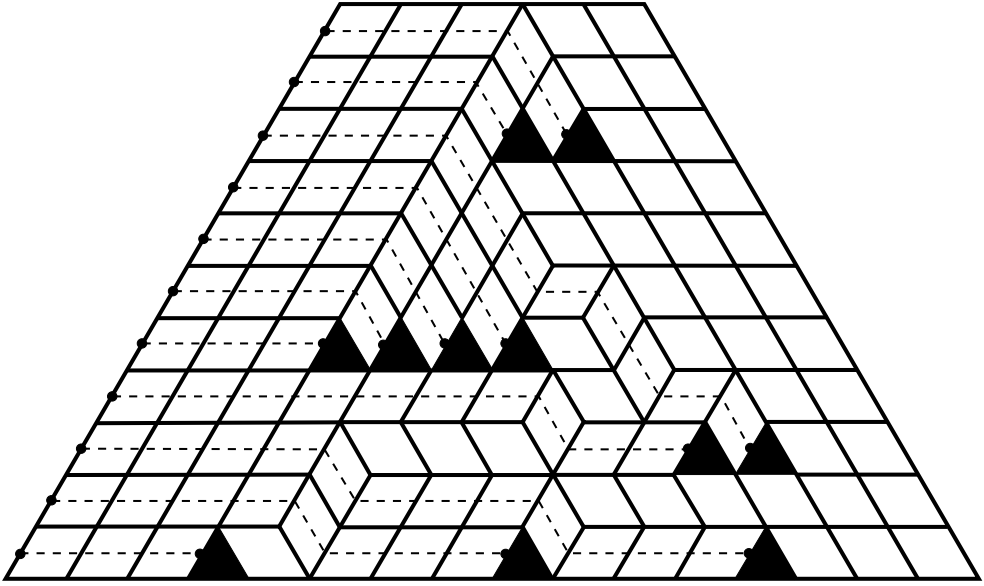}}
\caption{\label{exampletrapez} A lozenge tiling of an $(11,16)$-trapezoid along with the corresponding family of non-intersecting lattice paths.}
\end{figure}

The following lemma, which follows immediately from the Lindstr\"om-Gessel-Viennot theorem, allows us to compute the number of lozenge tilings of a trapezoid with an even set of up-pointing unit triangles removed. In order to formulate it, we need the forward difference operator $\fd_x$ which is defined as 
$$\fd_x p(x) = p(x+1)-p(x).$$ Moreover, we set 
$$
\binom{n}{k} = \begin{cases} \frac{n(n-1) \cdots (n-k+1)}{k!} & k \ge 0 \\ 0 & \text{otherwise} \end{cases}.
$$

\begin{lem} 
\label{gelfand} Consider an $(n,l)$-trapezoid with $n$ up-pointing unit triangles $R_1,R_2,\ldots,R_n$ removed. For each $i$, let $r_i$ be the row of $R_i$, counted from the bottom starting with $1$, and $c_i$ be the position of $R_i$ in its row, counted from the left starting with $1$. Then the signed enumeration of lozenge tilings\footnote{Each tiling being counted with a sign equal to the sign of the permutation induced by the paths of lozenges encoding the tiling; see Figure \ref{exampletrapez}.} of the $(n,l)$-trapezoid
where the triangles $R_1,R_2,\ldots,R_n$ have been removed is 
\begin{equation}
\label{operator}
\prod_{i=1}^{n} \fd_{c_i}^{r_i-1} \prod_{1 \le i < j \le n} \frac{c_j-c_i}{j-i} = \prod_{i=1}^{n} \fd_{c_i}^{r_i-1} \det_{1 \le i, j \le n} \left( \binom{c_i-d}{j-1} \right), 
\end{equation} 
for any $d$. 
If $R_1,\ldots,R_n$ is even, then the absolute value of this expression is the number of lozenge tilings.
\end{lem}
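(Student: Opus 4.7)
The plan is the standard Lindstr\"om--Gessel--Viennot (LGV) strategy, followed by a short operator manipulation that rewrites the resulting determinant in the form displayed in \eqref{operator}.

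First, I would use the bijection sketched in the text: a lozenge tiling of the $(n,l)$-trapezoid with up-pointing unit triangles $R_1,\dotsc,R_n$ removed is encoded by a family of $n$ non-intersecting lattice paths from the midpoints $A_1,\dotsc,A_n$ of the unit edges of the left leg (numbered bottom to top) to the midpoints $E_1,\dotsc,E_n$ of the $/$-sides of $R_1,\dotsc,R_n$. In convenient oblique coordinates $A_j$ sits at $(0,j-\tfrac12)$ and $E_i$ at $(c_i-1,r_i-\tfrac12)$; each path uses an ``east'' step $(1,0)$ and a ``southeast'' step $(1,-1)$, and the sign of a tiling is by definition the sign of the induced permutation. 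The LGV theorem then equates the signed enumeration of tilings with $\det_{1\le i,j\le n}(p_{ij})$, where $p_{ij}$ counts lattice paths from $A_j$ to $E_i$.

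Second, a direct step count gives $p_{ij}=\binom{c_i-1}{j-r_i}$ (exactly $j-r_i$ of the $c_i-1$ steps must be southeast); the paper's convention $\binom{n}{k}=0$ for $k<0$ correctly handles the cases $j<r_i$ in which no path exists. Shifting the horizontal origin lets us replace $c_i-1$ by $c_i-d$ for any integer $d$; this is admissible because the Vandermonde computation below will be independent of $d$. Next, the elementary identity $\Delta_c\binom{c-d}{m}=\binom{c-d}{m-1}$, iterated $r_i-1$ times, yields
\[
p_{ij}=\Delta_c^{r_i-1}\binom{c-d}{j-1}\bigg|_{c=c_i}.
\]
Since $\Delta_{c_i}^{r_i-1}$ only touches the $i$-th row of the matrix and operators on different variables commute, multilinearity of the determinant gives
\[
\det_{i,j}(p_{ij})=\left(\prod_{i=1}^{n}\Delta_{c_i}^{r_i-1}\right)\det_{i,j}\binom{c_i-d}{j-1},
\]
and a standard Vandermonde evaluation of the remaining determinant (using that $\binom{c-d}{j-1}$ is a polynomial in $c$ of degree $j-1$ with leading coefficient $1/(j-1)!$) identifies it with $\prod_{1\le i<j\le n}(c_j-c_i)/(j-i)$. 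This establishes the equality of the two expressions in \eqref{operator}, and equates both with the signed enumeration.

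Finally, under the evenness hypothesis all non-intersecting path families induce the same permutation, so every tiling contributes with a common sign and the absolute value of \eqref{operator} equals the number of tilings. The key point is that a horizontal chain of $n$ removed unit triangles is equivalent, via the forced lozenges noted just before the lemma, to a single removed up-triangle of side $n$; for an even chain in row $r\ge 2$, the forced lozenges surrounding it rigidly fix the matching between the $n$ incoming paths and the $n$ constituent $/$-edges, thereby fixing the permutation block on those indices, while chains in the bottom row create no sign ambiguity since their endpoints sit at the lowest heights in the configuration. The main obstacle I expect is this last sign-forcing step, which needs to be checked uniformly across arbitrary even configurations; the determinantal manipulations are otherwise routine.
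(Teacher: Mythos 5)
Your derivation of the determinantal identity follows essentially the same route as the paper: encode tilings by non-intersecting lattice paths, apply Lindstr\"om--Gessel--Viennot, pull the difference operators out of the rows by multilinearity, and evaluate the remaining determinant as a Vandermonde-type product using that $\binom{c-d}{j-1}$ is a degree-$(j-1)$ polynomial in $c$ with leading coefficient $1/(j-1)!$. One small wording issue: ``shifting the horizontal origin'' does not by itself change $\binom{c_i-1}{j-r_i}$ into $\binom{c_i-d}{j-r_i}$ (a translation changes start and end points alike, so displacements and path counts are unchanged); what actually licenses the replacement is the point you state next, namely that $\det\bigl(\binom{c_i-d}{j-1}\bigr)$ is independent of $d$, so the operator identity \eqref{operator} holds for any $d$. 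With that reading, your entry $\binom{c_i-1}{j-r_i}$ versus the paper's $\binom{c_i}{j-r_i}$ is an immaterial normalization, and this part of your argument is sound and matches the paper.

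The genuine gap is in your justification of the final sentence (evenness implies constant sign). Your claim that ``all non-intersecting path families induce the same permutation'' is false in general, and the argument you give for it only fixes the matching \emph{inside} a chain: the forced lozenges between consecutive removed triangles of a chain determine how the paths terminating on that chain arrive, but they say nothing about the paths that merely \emph{pass} the chain, which is exactly where the parity question lives. A path whose endpoint lies beyond a chain sitting in a row $r\ge 2$ can, in different tilings, clear that chain either entirely below it (in rows $<r$) or entirely above it (in rows $>r$), since the chain's interior is blocked by removed triangles and forced lozenges; switching between these two routings genuinely changes the induced permutation (it changes which starting point the passing path uses relative to the $2m$ paths ending on the chain), but it changes it by $2m$ transpositions. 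So for an even chain the \emph{sign} is invariant even though the permutation is not; for an odd chain above the bottom row the sign would flip, which is precisely why such chains are excluded, and bottom-row chains cause no ambiguity because no path can pass below them. This above/below parity argument is the missing ingredient; your ``same permutation'' claim cannot be repaired as stated. (To be fair, the paper itself only asserts the constant-sign property in the discussion preceding the lemma and does not prove it inside the lemma's proof, so your determinantal core is on par with the paper --- but since you attempted the sign step, the attempt as written does not work.)
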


\begin{proof} We use the bijection between lozenge tilings and families of non-intersecting lattice paths as indicated in Figure~\ref{exampletrapez}.
The starting points of the lattice paths on the left leg of the trapezoid can be parametrized by 
$(1,1),(2,2),\ldots,(n,n)$, from bottom to top, while the endpoint on the $/$-side of $R_i$ is then  $(c_i+r_i,r_i)$, and we allow unit steps $(1,0)$ and $(0,-1)$ in our paths. In general, the number of such lattice paths from $(a,b)$ to $(c,d)$ is $\binom{c-a+b-d}{b-d}$, so the number of paths from $(j,j)$ to $(c_i+r_i,r_i)$ is 
$$
\binom{c_i}{j-r_i} = \fd_{c_i}^{r_i-1} \binom{c_i}{j-1}.
$$
By the Lindstr\"om-Gessel-Viennot theorem, the signed enumeration is equal to 
$$
\det_{1 \le i, j \le n} \left( \fd_{c_i}^{r_i-1} \binom{c_i}{j-1} \right) = 
\prod_{i=1}^{n} \fd_{c_i}^{r_i-1} \det_{1 \le i, j \le n} \left( \binom{c_i}{j-1} \right),
$$
where we have used the linearity in the rows of the determinant to show the equality of the expressions.
The result is now a consequence of the following: Suppose $p_j(c)$ is a sequence of monic polynomials for $j=1,\ldots,n$ with 
$\deg_{c} p_j(c) = j-1$. Then, by elementary column operations,
$$
\det_{1 \le i, j \le n} \left( p_j(c_i) \right)=\prod_{1 \le i < j \le n} (c_j-c_i), 
$$
and the assertion now follows by choosing $p_j(c) = (j-1)! \binom{c-d}{j-1}$.
\end{proof}

As it was used in the proof, we may apply the powers of the forward difference operators also ``inside'' the determinant in \eqref{operator} (by the linearity of the determinant in the rows). That way we obtain a determinant in which each row corresponds to a removed triangle. Horizontal (connected) chains of removed triangles then correspond to sets of consecutive rows (if the numbering of the removed triangles was chosen accordingly) in the matrix; these are referred to as \emph{blocks} in the following. The parameter $d$ will play a crucial role and it is the reason why we write the formula in \eqref{operator} in this particular form: We will see that, for any such block, we can choose $d$ appropriately in such a way that this block can be ``eliminated''. We will find it useful to eliminate certain parameters (typically the length of a chain of removed unit triangles) from the matrices underlying our determinants, as this will help us obtain expressions that are polynomials in these parameters. It is this somewhat simple observation that is applied in the following repeatedly to derive two useful formulas for our concrete problem.
  
Next we apply this lemma to our setting. However, in order to be able to extend  
the determinantal formula for even $b$ to odd $b$, we need to work with 
satellites of independent sizes. It is not more difficult to consider a multivariate generalization, where also the three sides --- originally of length $n$ --- are allowed to have independent lengths, and so are the distances between the core and the satellites.

For non-negative integers $n_1,n_2,n_3,b_1,b_2,b_3,k_1,k_2,k_3$ and non-negative even $a$, we denote the hexagon with side lengths $n_1+a+b_1+b_2+b_3,n_3,n_2+a+b_1+b_2+b_3,n_1,n_3+a+b_1+b_2+b_3,n_2$ (clockwise from the northwestern side) that has four triangular holes with side lengths $a,b_1,b_2,b_3$, respectively, as indicated in Figure~\ref{example0} by $S_{n_1,n_2,n_3,a,b_1,b_2,b_3,k_1,k_2,k_3}$: The hole of size $a$ ({the} \emph{core}) has distance $(n_1+b_1) \cdot \frac{\sqrt{3}}{2}$, $(n_2+b_2) \cdot \frac{\sqrt{3}}{2}$, $(n_3+b_3) \cdot \frac{\sqrt{3}}{2}$ from the three sides of length $n_1+a+b_1+b_2+b_3$, $n_2+a+b_1+b_2+b_3$, $n_3+a+b_1+b_2+b_3$, respectively. The three holes of size $b_1,b_2,b_3$ ({the} \emph{satellites}) 
point towards the center of the core and have distance $2 k_1 \cdot \frac{\sqrt{3}}{2} , 2 k_2 \cdot \frac{\sqrt{3}}{2}, 2 k_3 \cdot \frac{\sqrt{3}}{2}$ from the core, respectively, where the satellite of size $b_i$ is situated between the core and the long side of the hexagon that has distance $n_i+b_i$ from the core. 

Note that the geometry of the configuration implies 
\begin{equation}
\label{geometry}
n_1 \le a + n_2 + b_2 + n_3 + b_3.
\end{equation}
This can be seen as follows: Consider the line that includes the  ``$/$''-side of the core. The length of the portion of this line included in the wedge obtained by extending the sides of length $n_2+a+b_1+b_2+b_3$ and $n_3+a+b_1+b_2+b_3$ until they meet, which is $a+n_2+b_2+n_3+b_3$, needs to be at least as large as the length of the southeastern edge of the hexagon, which is $n_1$. By symmetry, we also have $n_2 \le a + n_1+b_1+n_3+b_3$ and $n_3 \le a + n_1 + b_1 + n_2 + b_2$. 

\begin{figure}
\scalebox{0.5}{
\psfrag{a}{\Huge$a$}
\psfrag{b1}{\Huge$b_1$}
\psfrag{b2}{\Huge$b_2$}
\psfrag{b3}{\Huge$b_3$}
\psfrag{n1+b1}{\Huge$n_1+b_1$}
\psfrag{n2+b2}{\Huge$n_2+b_2$}
\psfrag{n3+b3}{\Huge$n_3+b_3$}
\psfrag{2k1}{\Huge$k_1$}
\psfrag{2k2}{\Huge$k_2$}
\psfrag{2k3}{\Huge$k_3$}
\psfrag{n1}{\Huge$n_1$}
\psfrag{n2}{\Huge$n_2$}
\psfrag{n3}{\Huge$n_3$}
\psfrag{n1+a+b1+b2+b3}{\Huge$n_1+a+b_1+b_2+b_3$}
\psfrag{n2+a+b1+b2+b3}{\Huge$n_2+a+b_1+b_2+b_3$}
\psfrag{n3+a+b1+b2+b3}{\Huge$n_3+a+b_1+b_2+b_3$}
\includegraphics{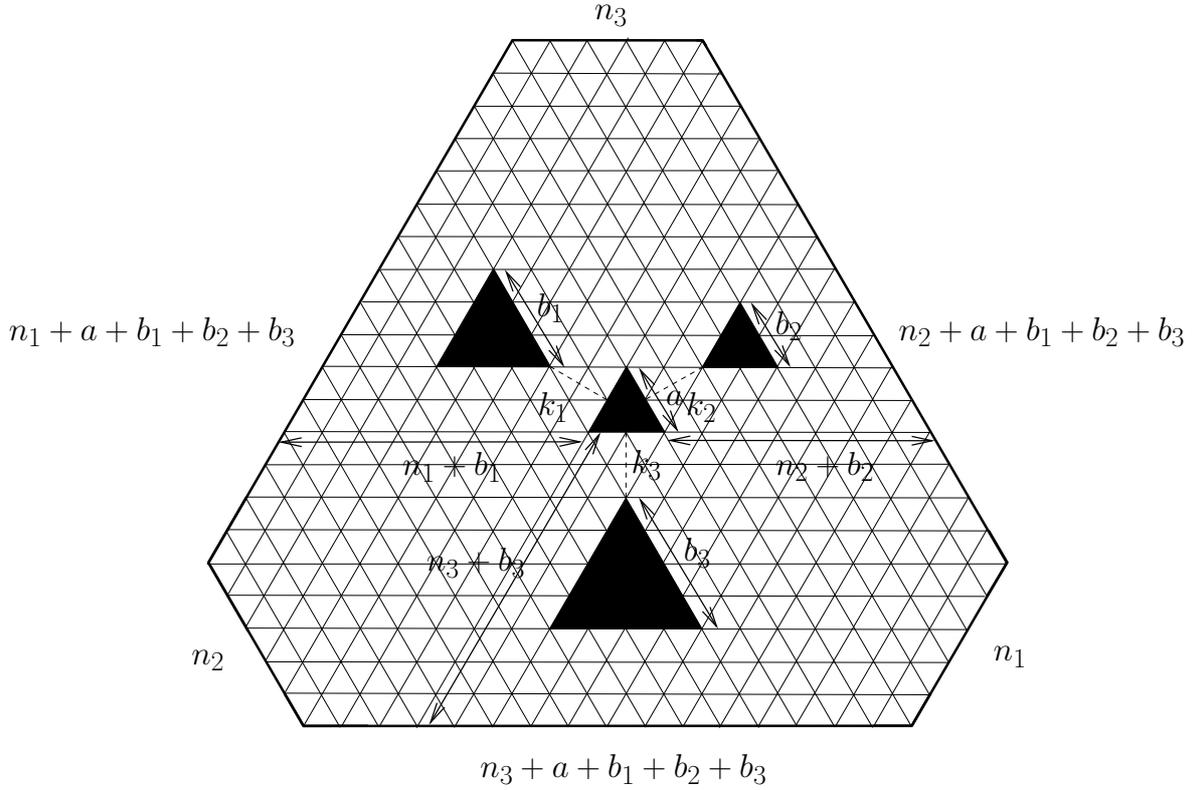}} 
\caption{\label{example0} Independent satellites}
\end{figure}

\begin{figure}
\scalebox{0.4}{\includegraphics{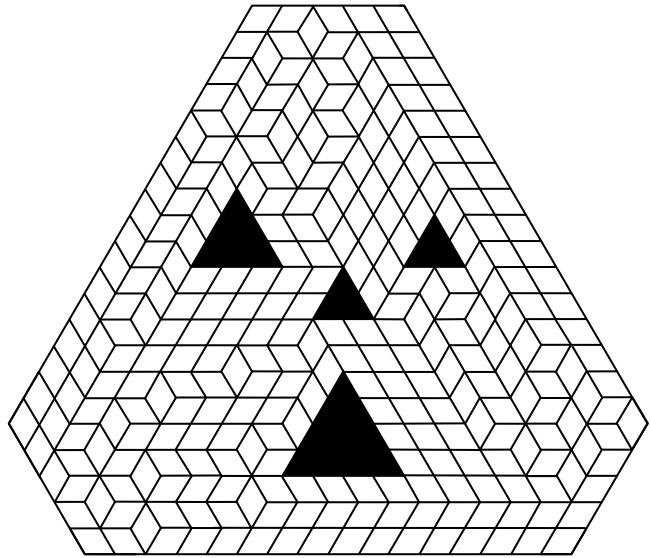}}
\caption{\label{example} An example of a lozenge tiling of $S_{5,5,5,2,3,2,4,1,1,1}$}
\end{figure}

If $b_1,b_2,b_3$ are even, Lemma~\ref{gelfand} can be applied to compute the number of lozenge tilings of this region. Indeed, set 
$n=n_1+n_2+a+b_1+b_2+b_3$. In order to start from an $(n,n+n_3)$-trapezoid, we add a triangle of size $n_2$ at the bottom left corner of the hexagon, while we add a triangle of size $n_1$ at the bottom right corner. We have six chains of triangles to be removed as follows.
\begin{enumerate}
\item At height $0$\footnote{The height of a removed triangle is one less than its row number.} of length $n_2$ in positions $1,\ldots,n_2$.
\item At height $0$ of length $n_1$ in positions $n_2+n_3+a+b_1+b_2+b_3+1,\ldots,n_1+n_2+n_3+a+b_1+b_2+b_3$.
\item At height $n_3-2 k_3$ of length $b_3$ in positions $n_1+\frac{a}{2}+b_1+k_3+1,\ldots,n_1+\frac{a}{2}+b_1+b_3+k_3$.
\item At height $n_3+b_3$ of length $a$ in positions $n_1+b_1+1,\ldots,n_1+a+b_1$.
\item At height $n_3+\frac{a}{2}+b_3+k_1$ of length $b_1$ in positions $n_1-2 k_1+1,\ldots,n_1 - 2 k_1+b_1$.
\item At height $n_3+\frac{a}{2}+b_3+k_2$ of length $b_2$ in positions $n_1+\frac{a}{2}+b_1+k_2+1,\ldots,n_1+\frac{a}{2}+b_1+b_2+k_2$.
\end{enumerate}

Using Lemma~\ref{gelfand}, it follows that the number of lozenge tilings of $S_{n_1,n_2,n_3,a,b_1,b_2,b_3,k_1,k_2,k_3}$ is 
\begin{equation}
\label{op}
\renewcommand\arraystretch{1.3}
\left|
\prod_{i=1}^{b_3} \fd_{c_{3,i}}^{n_3-2 k_3} \prod_{i=1}^{a} \fd_{c_{4,i}}^{n_3+b_3} \prod_{i=1}^{b_1} \fd_{c_{5,i}}^{n_3+\frac{a}{2}+b_3+k_1} \prod_{i=1}^{b_2} \fd_{c_{6,i}}^{n_3+\frac{a}{2}+b_3+k_2} 
\det \left( 
\begin{array}{cc}  
\binom{c_{1,i}-d}{j-1} &  \quad {1 \le i \le n_2} \\   
\binom{c_{2,i}-d}{j-1} & \quad {1 \le i \le n_1}  \\ 
\binom{c_{3,i}-d}{j-1} &  \quad {1 \le i \le b_3} \\ 
\binom{c_{4,i}-d}{j-1} & \quad {1 \le i \le a} \\ 
\binom{c_{5,i}-d}{j-1} & \quad {1 \le i \le b_1} \\ 
\binom{c_{6,i}-d}{j-1} & \quad {1 \le i \le b_2} 
\end{array} 
\right)_{1 \le j \le n}\right|,
\end{equation}
where $c_{1,i}=i$, $c_{2,i}=n_2+n_3+a+b_1+b_2+b_3+i$, $c_{3,i}=n_1+\frac{a}{2}+b_1+k_3+i$, $c_{4,i}=n_1+b_1+i$, $c_{5,i}=n_1 -2 k_1+i$ and $c_{6,i}=n_1+\frac{a}{2}+b_1+k_2+i$.  We obtain the following result.

\begin{prop}
For even $b_1,b_2,b_3$, we have 
\begin{equation}
\m(S_{n_1,n_2,n_3,a,b_1,b_2,b_3,k_1,k_2,k_3})= \left|
\label{bigmatrix}
\det \left( \begin{matrix*}[l]  \binom{i-d}{j-1} & {1 \le i \le n_2}  \vspace{2mm} \\ \binom{n_2+n_3+a+b_1+b_2+b_3+i-d}{j-1} & {1 \le i \le n_1}  \vspace{2mm} \\ \binom{n_1+\frac{a}{2}+b_1+k_3+i-d}{j-1-n_3+2 k_3} & {1 \le i \le b_3} \vspace{2mm} \\ \binom{n_1+b_1+i-d}{j-1-n_3-b_3} & {1 \le i \le a} \vspace{2mm} \\ \binom{n_1 -2 k_1+i-d}{j-1-n_3-\frac{a}{2}-b_3-k_1} & {1 \le i \le b_1} \vspace{2mm} \\ \binom{n_1+\frac{a}{2}+b_1+k_2+i-d}{j-1-n_3-\frac{a}{2}-b_3-k_2} & {1 \le i \le b_2} \end{matrix*} \right)_{1 \le j \le n} \right|, 
\end{equation}
where $d$ can be chosen arbitrarily.
\end{prop}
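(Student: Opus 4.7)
The plan is to apply Lemma~\ref{gelfand} directly to the trapezoid embedding of $S_{n_1,n_2,n_3,a,b_1,b_2,b_3,k_1,k_2,k_3}$ already set up just above the proposition, and then to push the forward difference operators past the determinant sign using linearity of $\det$ in its rows.

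First I would verify that lozenge tilings of $S_{n_1,n_2,n_3,a,b_1,b_2,b_3,k_1,k_2,k_3}$ are in bijection with lozenge tilings of the $(n, n+n_3)$-trapezoid, with $n = n_1 + n_2 + a + b_1 + b_2 + b_3$, obtained by attaching the two up-pointing triangles of sizes $n_2$ and $n_1$ at the bottom-left and bottom-right corners and removing the six horizontal chains of unit triangles listed above the proposition. In the two corner triangles every lozenge is forced, and each of the four interior holes (the core of side $a$ and the three satellites of sides $b_1, b_2, b_3$) is equivalent, by forced lozenges, to a horizontal chain of unit triangles of the same length and situated at the stated height.

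Next I would check that this set of removed unit triangles is \emph{even} in the sense of the paragraph preceding Lemma~\ref{gelfand}: the horizontal chains not in the bottom row have lengths $b_3$, $a$, $b_1$, $b_2$, which are all even under the hypothesis that $a, b_1, b_2, b_3$ are even. Hence every tiling contributes with the same sign, and Lemma~\ref{gelfand} identifies $\m(S_{n_1,n_2,n_3,a,b_1,b_2,b_3,k_1,k_2,k_3})$ with the absolute value of the signed enumeration in \eqref{op}.

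Finally, to pass from \eqref{op} to the displayed matrix, I would move each power of $\fd$ past the determinant sign onto the row associated with the corresponding removed triangle (linearity in rows) and then apply the identity $\fd_c^r \binom{c-d}{j-1} = \binom{c-d}{j-1-r}$ to shift the binomial's lower parameter by $-r$ for each row associated with a triangle of height $r$; the two bottom-row chains (of lengths $n_1$ and $n_2$) have height $0$ and require no shift. The main bookkeeping step, and the most plausible source of a sign or indexing error, is reading off the column parameters $c_{t,i}$ and the heights for the six row blocks from the horizontal positions of the removed triangles listed above the proposition and matching them consistently with the lower-index shifts; after this substitution one recovers exactly the six blocks displayed in the statement. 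The independence of the right-hand side from the choice of $d$ is automatic, since shifting the column argument by a constant only performs elementary column operations on a matrix of binomial polynomials, as already exploited in the proof of Lemma~\ref{gelfand}.
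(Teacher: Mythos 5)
Your proposal is correct and follows essentially the same route as the paper: embed the holed hexagon in the $(n,n+n_3)$-trapezoid with the six listed chains removed (evenness holding since $a$ is even by the region's definition and $b_1,b_2,b_3$ by hypothesis), apply Lemma~\ref{gelfand} to get \eqref{op}, and then move the difference operators inside the determinant row by row via $\fd_c^r\binom{c-d}{j-1}=\binom{c-d}{j-1-r}$, which is exactly how the paper passes from \eqref{op} to \eqref{bigmatrix}.
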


\subsection{Polynomiality in the sizes of the satellites}

The technique we are using to deal with odd-sized satellites is based on the following crucial observation.

\begin{lem} 
\label{indeplem}
For any $i \in \{1,2,3\}$, the quantity $\m(S_{n_1,n_2,n_3,a,b_1,b_2,b_3,k_1,k_2,k_3})$ is a polynomial in $b_i$ when fixing the $n_i$'s, the $k_i$'s, the core size $a$, and the two $b_j$'s with $j \not=i$.
\end{lem}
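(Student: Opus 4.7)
The plan is to follow the approach of \cite[Section 6]{CEKZ}, which established analogous polynomiality for cored hexagons. By the evident symmetry of the configuration under permutations of the three satellites, it suffices to prove the claim for a single index, say $i=3$, with $n_1, n_2, n_3, a, b_1, b_2, k_1, k_2, k_3$ held fixed.

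The starting point is the determinantal formula \eqref{bigmatrix}, which is valid whenever $b_3$ is even. The parameter $b_3$ enters this formula in two ways: through the size of the block of rows corresponding to the $b_3$-satellite, and through the overall matrix size $n=n_1+n_2+a+b_1+b_2+b_3$. The objective is to recast the determinant so that $b_3$ is removed from the matrix dimensions and appears only as a polynomial parameter in the entries of a matrix of fixed size.

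First, I would Laplace-expand \eqref{bigmatrix} along the $b_3$ rows belonging to the satellite block. These rows have the crucial feature that their row labels $c_{3,r}=u+r$, with $u=n_1+\tfrac{a}{2}+b_1+k_3$, are consecutive integers. Consequently, each $b_3\times b_3$ minor of the block—indexed by a choice of $b_3$ columns—admits a closed-form Vandermonde-type evaluation, separating into factors expressible as Pochhammer symbols in the variable $b_3$.

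Second, I would resum over the $b_3$-element column subsets arising in the Laplace expansion. The consecutive-integer structure of the row labels, together with the alternating signs from the Laplace expansion, allows this alternating sum to be rewritten as a single determinant of size $n_1+n_2+a+b_1+b_2$ (manifestly independent of $b_3$), whose entries are polynomials in $b_3$. This resummation is in the spirit of the arguments of \cite[Section 6]{CEKZ} and rests on elementary identities for alternating sums of products of binomial coefficients (Chu--Vandermonde and its cousins). Once such a fixed-size polynomial-entry determinantal formula is obtained, it gives a polynomial in $b_3$ whose value agrees with $\m(S_{n_1,n_2,n_3,a,b_1,b_2,b_3,k_1,k_2,k_3})$ at every even value of $b_3$. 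Since a polynomial is determined by its values on any infinite set, this polynomial extends $\m$ uniquely to a polynomial in $b_3$ defined for all values, completing the proof.

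The main obstacle I anticipate is the resummation in the second step: packaging the alternating sum over $b_3$-element column subsets into a single fixed-size determinant with polynomial-in-$b_3$ entries is the delicate technical heart of the argument. The supporting facts used above—the Vandermonde evaluation of the satellite minors, and the consecutive-integer spacing of their row labels—make this resummation accessible, but choosing the right combination of summation identities to close the computation (and verifying that the resulting entries genuinely are polynomials in $b_3$, not merely piecewise polynomial or rational) requires careful bookkeeping.
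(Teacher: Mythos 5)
Your proposal breaks down at its final step, and the break is fatal rather than technical. Everything you do rests on the determinantal formula \eqref{bigmatrix}, which is valid only when $b_3$ is even: for odd $b_3$ the removed horizontal chain has odd length, the tilings no longer all carry the same sign, and the Lindstr\"om--Gessel--Viennot determinant simply does not compute $\m(S_{n_1,n_2,n_3,a,b_1,b_2,b_3,k_1,k_2,k_3})$. So even granting the Laplace-expansion/resummation step you describe (which you have not carried out), what you would obtain is a polynomial $p(b_3)$ agreeing with the tiling count at all \emph{even} $b_3$. The lemma asserts that the count itself is polynomial in $b_3$ over \emph{all} non-negative integers; your closing sentence (``this polynomial extends $\m$ uniquely\dots'') merely defines an extension of the restriction of $\m$ to even values and gives no argument that the actual count at odd $b_3$ equals $p(b_3)$. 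This is not a small omission: polynomiality at odd satellite sizes is exactly the content of the lemma, and exactly what it is later used for --- in Theorem~\ref{oddb} a modified determinant, manifestly polynomial in $b_1$, is shown to give the count for odd $b_1$ by combining this lemma with agreement at even $b_1$. An argument that only ever touches even values can never supply that ingredient, so your route is in effect circular relative to the lemma's purpose.

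For contrast, the paper's proof never uses a determinant whose validity depends on the parity of $b_i$. Taking $i=2$ (by symmetry), it cuts out the subregion $R$ --- the smallest lattice hexagon containing the southwestern side, the core, and the satellites of sizes $b_1$ and $b_3$ --- which does not change as $b_2$ varies, so the number of configurations of lozenges straddling the cut is finite and independent of $b_2$. For each such configuration the complement is subdivided by three rays emanating from the $b_2$-satellite, whose lengths inside the complement are again independent of $b_2$, and each resulting piece is counted by a Lindstr\"om--Gessel--Viennot determinant of order independent of $b_2$ whose entries are either constant in $b_2$ or of the form $\binom{b_2+c}{d}$. Summing these polynomials over the finitely many interface configurations gives polynomiality of $\m$ in $b_2$ for \emph{all} non-negative integers $b_2$ --- the statement you need and the one your approach cannot reach. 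If you want to salvage your strategy, you must replace the appeal to \eqref{bigmatrix} by an argument that treats odd $b_3$ directly, which essentially forces you back to a region-decomposition argument of this kind.
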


\begin{proof} We follow the ideas provided in \cite[Section~6]{CEKZ}, which were used there to show the polynomiality of $\m(S_{n,n,n,a,0,0,0,0,0,0})$ in $a$. By symmetry, it suffices to consider the case $i=2$.

Set $S=S_{n_1,n_2,n_3,a,b_1,b_2,b_3,k_1,k_2,k_3}$. Let $R$ be the smallest lattice hexagon that contains the southwestern side of $S$, the core, and the satellites of side-lengths $b_1$ and $b_3$. For the region in Figure \ref{example}, the resulting region $R$ is shown on the lower left in Figure \ref{example1} (delimited by the dashed line).

\begin{figure}[h]
\centerline{
\hfill
{\includegraphics[width=0.60\textwidth]{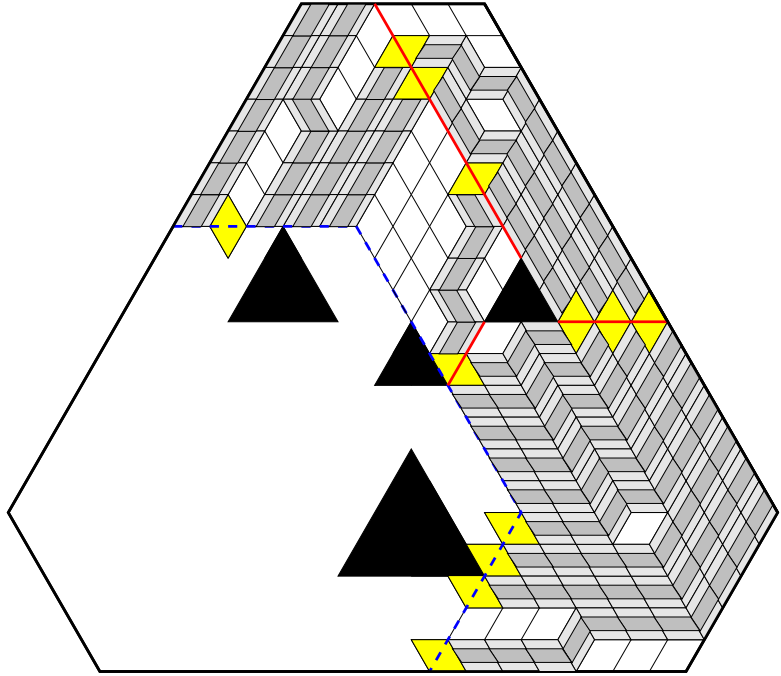}}
\hfill
}
\caption{\label{example1} Proving polynomiality in $b_2$: The region $R$ on the lower left delimited by the dashed line does not change as $b_2$ varies; the lozenges that straddle the dashed line in one of its fixed number (independent of $b_2$) of tilings are indicated. The number of ways each such tiling of $R$ can be extended to a tiling of $S_{n_1,n_2,n_3,a,b_1,b_2,b_3,k_1,k_2,k_3}$ is polynomial in $b_2$ for fixed $n_1,n_2,n_3,a,b_1,b_3,k_1,k_2,k_3$, by the argument in \cite{CEKZ}.}
\end{figure}

As $b_2$ varies over the non-negative integers (with all the other parameters having fixed values), the region $R$ does not change. In particular, the number of lozenge tilings of $R$ in which the dashed portion of the boundary is treated as free (i.e. lozenges are allowed to protrude halfway across them) is a fixed number, independent of $b_2$.

One instance of such a tiling (for focus, only the lozenges straddling the dashed line) is shown in Figure \ref{example1}. By the observation in the previous paragraph, it suffices to show that for any such fixed choice of lozenges straddling the dashed line, the complement $R'$ of $R$ in $S$ --- which {\it does} change as $b_2$ varies --- has a number of tilings that is a polynomial in $b_2$.

This follows by the very same arguments we used in Section 6 of \cite{CEKZ}. Indeed, extend rays from the satellite of side $b_2$ as indicated by the thick solid lines in Figure \ref{example1}. Depending on the actual values of the fixed parameters $n_1,n_2,n_3,a,b_1,b_3,k_1,k_2,k_3$, the ray going southwest may intersect either the NE or SE side of $R$. Similarly, the ray going east intersects either the NE or the SE side of $S$, and the ray going northwest either the N or the NW side of $S$. Figure \ref{example1} shows one of these possibilities. We prove polynomiality of $\m(R')$ in $b_2$ in this case; the others follow the same way.

As we did in \cite{CEKZ}, we partition the lozenge tilings of $R'$ that have a fixed set $\mathcal{L}$ of lozenges straddling the dashed line according to the sets of lozenges $\mathcal{L}_1$, $\mathcal{L}_2$ and $\mathcal{L}_3$ that straddle each of the three rays. Of key importance is the fact that the length each of these rays runs in $R'$ has a fixed value, independent of $b_2$. It therefore suffices to show that for each fixed choice of the position of the lozenges in $\mathcal{L}$, $\mathcal{L}_1$, $\mathcal{L}_2$ and $\mathcal{L}_3$, the number of lozenge tilings of $R'$ that contain these lozenges is a polynomial in $b_2$.

Clearly, this number is the product of the number of corresponding tilings of the three regions that the rays divide $R'$ into. For each of these three regions, encode their tilings as families of paths of lozenges (equivalently, lattice paths on $\Z^2$) as indicated in Figure \ref{example1}. Then by the Lindstr\"om-Gessel-Viennot theorem \cite{Lindstr,GesselViennot}, the number of tilings of each of these three regions is equal to a determinant whose order is independent of $b_2$, and all of whose entries are --- as can be easily checked --- either independent of $b_2$, or of the form $b_2+c\choose d$, with $c$ and $d$ independent of $b_2$. This implies that each of them is a polynomial in $b_2$, and the proof is complete. \end{proof}



\subsection{A determinantal formula for general $b_1,b_2,b_3$ assuming $k_2=k_3$}

The goal of this section is to derive the following determinantal formula for $\m(S_{n_1,n_2,n_3,a,b_1,b_2,b_3,k_1,k_2,k_2})$ that holds for general $b_1,b_2,b_3$. 
Note that we assume $k_2=k_3$ because the situation is simpler then, but the procedure can be adapted so that it works also if $k_2 \not= k_3$. In this formula, we use the convention 
$$
\sum_{i=a}^{b} p(i) = - \sum_{i=b+1}^{a-1} p(i) 
$$
if $b<a$. Note that this implies $\sum\limits_{i=a}^{a-1} p(i) =0$.


\begin{theo} 
\label{oddb}
For all non-negative integers $n_1,n_2,n_3,b_1,b_2,b_3,k_1,k_2$ and even $a$, we have 
\begin{multline*}
\renewcommand\arraystretch{1.5}
\m(S_{n_1,n_2,n_3,a,b_1,b_2,b_3,k_1,k_2,k_2}) \\   =
\left| \det \left( \begin{array}{ll} (-1)^{[j \ge n_3-2 k_2+1] b_3} \left( \binom{i-n_1-\frac{a}{2}-b_1-k_2-1}{j-1} \right. \\   \left. \quad + ((-1)^{b_1}-1)  \sum\limits_{q=n_3+\frac{a}{2}+b_3+k_1+1}^{j}  \binom{i-n_1+2 k_1-1}{q-1} \binom{-2 k_1 -\frac{a}{2}-b_1-k_2}{j-q} \right)  & {1 \le i \le n_2}   \\ \hline  (-1)^{[j \ge n_3+\frac{a}{2}+b_3+k_2+1] b_2} \binom{-n_1 + n_2+n_3+\frac{a}{2}+b_2+b_3-k_2 + i-1}{j-1} & {1 \le i \le n_1 }  \\  \hline \binom{i-1}{j-1-n_3+2 k_2} & {1 \le i \le b_3 }  \\ \hline  \binom{i-\frac{a}{2}-k_2-1)}{j-1-n_3-b_3} &  {1 \le i \le a }  \\  \hline \binom{-\frac{a}{2}-b_1 -2 k_1-k_2 +i-1}{j-1-n_3-\frac{a}{2}-b_3-k_1} & {1 \le i \le b_1}  \\  \hline \binom{i-1}{j-1-n_3-\frac{a}{2}-b_3-k_2} & {1 \le i \le b_2} \end{array} \right)_{1 \le j \le n} \right|, 
\end{multline*}
where we use the Iverson bracket which is defined as 
$$
[\text{statement}] = \begin{cases} 1 & \text{if the statement is true} \\ 0 & \text{otherwise} \end{cases}.
$$
\end{theo}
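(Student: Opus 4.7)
My plan is to follow the roadmap described at the start of Section~\ref{detsec}: derive the determinantal formula first in the case of even satellite sizes via Lindstr\"om-Gessel-Viennot (Lemma~\ref{gelfand}), and then use the polynomiality of the tiling count in each $b_i$ separately (Lemma~\ref{indeplem}) to modify the formula so that it remains valid for odd satellite sizes.

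For the first step, I begin from formula \eqref{bigmatrix}, which is valid for even $b_1, b_2, b_3$ and still carries a free parameter $d$. The decisive choice is $d = n_1 + b_1 + a/2 + k_2 + 1$: with this substitution, and using the hypothesis $k_3 = k_2$, a direct computation shows that blocks 3, 4, 5 and 6 of \eqref{bigmatrix} match exactly the corresponding blocks of the target determinant in Theorem~\ref{oddb}; that block~1 reduces to the leading binomial $\binom{i-n_1-a/2-b_1-k_2-1}{j-1}$ asserted in the target; and that block~2 reduces to the unsigned target binomial $\binom{-n_1+n_2+n_3+a/2+b_2+b_3-k_2+i-1}{j-1}$. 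For even $b_1, b_2, b_3$ the sign factors $(-1)^{[j\ge\cdot]b_i}$ all equal $1$, and the correction term $((-1)^{b_1}-1)\sum_q\binom{i-n_1+2k_1-1}{q-1}\binom{-2k_1-a/2-b_1-k_2}{j-q}$ vanishes, so the target coincides with \eqref{bigmatrix} and the theorem holds in the all-even case.

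For the second step, within any fixed parity class of $(b_1,b_2,b_3)$ the target determinant is a polynomial in each $b_i$: the signs $(-1)^{b_i}$ become constants, the threshold Iverson brackets translate into rigid shifts of binomial column indices, and the binomials themselves are polynomials in the $b_i$'s. By Lemma~\ref{indeplem}, $\m(S_{n_1,n_2,n_3,a,b_1,b_2,b_3,k_1,k_2,k_2})$ is likewise a polynomial in each $b_i$. One then proceeds by polynomial interpolation in one $b_i$ at a time, bootstrapping from the all-even case to all eight parity classes.

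The main obstacle is anchoring the polynomial identities in the four parity classes with $b_1$ odd, where the correction in block~1 becomes $-2\sum_q\binom{A}{q-1}\binom{B}{j-q}$, a truncated Vandermonde-Chu sum whose unrestricted extension equals $\binom{A+B}{j-1} = \binom{i-n_1-a/2-b_1-k_2-1}{j-1}$. The right viewpoint is that this partial sum realizes, through an elementary row operation between block~1 and block~5 of the determinant, the inclusion-exclusion contribution of lattice paths passing through the odd-sized satellite of size $b_1$. Showing that this row operation yields the correct polynomial on each odd parity class of $b_1$, and confirming that the signs $(-1)^{[\cdot]b_2}$ and $(-1)^{[\cdot]b_3}$ in blocks~2 and~1 compensate exactly for the analogous sign flips in the LGV signed enumeration when $b_2$ or $b_3$ becomes odd, constitutes the remaining combinatorial work.
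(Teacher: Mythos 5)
Your first step is fine: setting $d=n_1+\tfrac{a}{2}+b_1+k_2+1$ in \eqref{bigmatrix} (with $k_3=k_2$) does reproduce every block of the determinant in Theorem~\ref{oddb}, the sign factors are trivial and the correction sum vanishes when all $b_i$ are even, so the all-even case is immediate. The genuine gap is in the second step. You assert that ``within any fixed parity class of $(b_1,b_2,b_3)$ the target determinant is a polynomial in each $b_i$,'' but the matrix has size $n=n_1+n_2+a+b_1+b_2+b_3$, which grows with each $b_i$, and the number of rows in the satellite blocks is $b_i$ itself; a determinant of growing size is not a priori a polynomial in the growth parameter, and nothing in your proposal addresses this. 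The entire content of the paper's proof is precisely to manufacture such polynomiality: one must first eliminate the block of size $b_i$ (which forces the choice $d_1=n_1-2k_1+1$ for the $b_1$-block, not the $d_2$ you work with) so that the determinant has size independent of $b_i$, and then show, entry by entry, that after inserting $(-1)^{b_i}$ in explicitly specified positions each entry becomes a polynomial in $b_i$ --- this requires the binomial identities \eqref{sym} and \eqref{elementary2}, the geometric inequalities such as \eqref{geometry} to decide which identity applies where, and, for the top block after passing from $d_1$ to $d_2$ by Chu--Vandermonde, an argument for why the truncated sums are polynomial in $b_2$, $b_3$. The correction term $((-1)^{b_1}-1)\sum_q\binom{i-n_1+2k_1-1}{q-1}\binom{-2k_1-\frac{a}{2}-b_1-k_2}{j-q}$ is exactly the residue of that change of $d$-parameter applied to the sign-modified top block; your reading of it as a row operation between blocks $1$ and $5$ encoding an inclusion--exclusion over paths through the satellite is a heuristic, not a derivation, and you explicitly defer the verification (``constitutes the remaining combinatorial work''), which is the heart of the theorem.

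There is also a logical problem with the interpolation scheme as you state it. Interpolation transfers the identity from even to odd $b_1$ only if one single polynomial expression (in the genuine integer variable $b_1$) represents the right-hand side for \emph{all} $b_1$ and agrees with $\m(S_{\dots})$ on the infinite set of even values; Lemma~\ref{indeplem} supplies polynomiality of the left-hand side, but the right-hand side must be shown to be that single polynomial, which is what the sign-modified, block-eliminated determinant achieves in the paper. If, as you propose, polynomiality of the formula is only claimed separately on each parity class, then in the odd class you have no anchor values whatsoever, and the bootstrap from the all-even case cannot start. So the plan as written does not close; to repair it you would essentially have to reproduce the paper's elimination/sign-modification/re-expansion analysis (first in the $d_1$ frame for $b_1$, then after the Chu--Vandermonde passage to $d_2$ for $b_2$ and $b_3$, using $k_2=k_3$ to eliminate those two blocks simultaneously).
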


\begin{proof}
We set $d=d_1=n_1-2 k_1+1$ in the determinant in \eqref{bigmatrix}. Then all entries in the first row of the fifth block (which is the one corresponding to the satellite of size $b_1$) are zero except for the one in column $j=1+n_3+\frac{a}{2}+b_3+k_1$. We expand with respect to this row. The new top row of the fifth block has again only a non-zero entry in column $j=1+n_3+\frac{a}{2}+b_3+k_1$, and so we expand with respect to this row now. We keep doing this until the fifth block has vanished and obtain 
$$
(-1)^{(n_1+n_2+n_3+\frac{3a}{2}+k_1)b_1} 
\det \left( \begin{matrix*}[l]  \binom{i-d_1}{j-1} & {1 \le i \le n_2}  \vspace{2mm} \\ \binom{n_2+n_3+a+b_1+b_2+b_3+i-d_1}{j-1} & {1 \le i \le n_1}  \vspace{2mm} \\ \binom{n_1+\frac{a}{2}+b_1+k_3+i-d_1}{j-1-n_3+2 k_2} & {1 \le i \le b_3} \vspace{2mm} \\ \binom{n_1+b_1+i-d_1}{j-1-n_3-b_3} & {1 \le i \le a} \vspace{2mm}  \\ \binom{n_1+\frac{a}{2}+b_1+k_2+i-d_1}{j-1-n_3-\frac{a}{2}-b_3-k_2} & {1 \le i \le b_2} \end{matrix*} \right)_{1 \le j \le n_3+\frac{a}{2}+b_3+k_1 \atop n_3+\frac{a}{2}+b_1+b_3+k_1+1 \le j \le n}.
$$
This can also be written as follows (omitting now the ranges for the rows $i$).
\begin{multline*}
\renewcommand\arraystretch{1.3}
(-1)^{(n_1+n_2+n_3+\frac{3a}{2}+k_1)b_1} \\ 
\times 
\det \left( \begin{array}{l|l}  \binom{i-d_1}{j-1}_{1 \le j \le n_3+\frac{a}{2}+b_3+k_1}  & \binom{i-d_1}{j+n_3+\frac{a}{2}+b_1+b_3+k_1-1}_{1 \le j \le n_1+n_2-n_3+\frac{a}{2}+b_2-k_1}   \\ \hline \binom{n_2+n_3+a+b_1+b_2+b_3+i-d_1}{j-1}_{1 \le j \le n_3+\frac{a}{2}+b_3+k_1} & \binom{n_2+n_3+a+b_1+b_2+b_3+i-d_1}{j+n_3+\frac{a}{2}+b_1+b_3+k_1-1}_{1 \le j \le n_1+n_2-n_3+\frac{a}{2}+b_2-k_1}   \\ \hline \binom{n_1+\frac{a}{2}+b_1+k_3+i-d_1}{j-1-n_3+2 k_2}_{1 \le j \le n_3+\frac{a}{2}+b_3+k_1} & \binom{n_1+\frac{a}{2}+b_1+k_3+i-d_1}{j+n_3+\frac{a}{2}+b_1+b_3+k_1-1-n_3+2 k_2}_{1 \le j \le n_1+n_2-n_3+\frac{a}{2}+b_2-k_1}  \\ \hline \binom{n_1+b_1+i-d_1}{j-1-n_3-b_3}_{1 \le j \le n_3+\frac{a}{2}+b_3+k_1} & \binom{n_1+b_1+i-d_1}{j+n_3+\frac{a}{2}+b_1+b_3+k_1-1-n_3-b_3}_{1 \le j \le n_1+n_2-n_3+\frac{a}{2}+b_2-k_1}   \\ \hline \binom{n_1+\frac{a}{2}+b_1+k_2+i-d_1}{j-1-n_3-\frac{a}{2}-b_3-k_2}_{1 \le j \le n_3+\frac{a}{2}+b_3+k_1} & \binom{n_1+\frac{a}{2}+b_1+k_2+i-d_1}{j+n_3+\frac{a}{2}+b_1+b_3+k_1-1-n_3-\frac{a}{2}-b_3-k_2}_{1 \le j \le n_1+n_2-n_3+\frac{a}{2}+b_2-k_1} \end{array} \right)
\end{multline*}
(Concerning the range of the right block, note that $n_1+n_2-n_3+\frac{a}{2}+b_2-k_1 \ge 0$ because of the following: consider the wedge of the line containing the $(n_1+a+b_1+b_2+b_3)$-side of the hexagon and of the line containing the $(n_2+a+b_1+b_2+b_3)$-side of the hexagon. Then the length of the section of the horizontal line containing the top vertex of the satellite of size $b_1$ in this wedge is $n_1+n_2+\frac{a}{2}+b_2-k_1$ which is now obviously greater than or equal to $n_3$, the length of the top side of the hexagon.)

Next we will show that we can modify this formula by introducing $(-1)^{b_1}$ at various places such that the result is a polynomial function in $b_1$. Since this has of course no effect if $b_1$ is even, the modified formula will give the number of lozenge tilings for even $b_1$. However, using Lemma~\ref{indeplem} and the fact that a polynomial is uniquely determined by its evaluation on even integers, the modified formula gives the number of lozenge tilings for all non-negative integers $b_1$ (however still assuming that $b_2$ and $b_3$ are even).

The following observations are crucial:
\begin{itemize} 
\item The entries in the first $n_3+\frac{a}{2}+b_3+k_1$ columns are all polynomials in $b_1$, since $b_1$ appears at most in the upper parameter of the binomial coefficient. 

\item The entries that are right of column $n_3+\frac{a}{2}+b_3+k_1$ and below row $n_2$ are also polynomials in $b_1$: These entries are binomial coefficients of the form $\binom{b_1+s}{b_1+t}$ for some integers $s$ and $t$. We have $b_1+s \ge 0$ which follows basically because the satellite of size $b_1$ is the leftmost removed (big) triangle except for the triangle of size $n_2$, which however corresponds to the top block (see also \eqref{geometry}). Thus we can apply the symmetry of the binomial coefficient, i.e.,
\begin{equation}
\label{sym}
\binom{n}{k}= \binom{n}{n-k} \quad \text{if $n \ge 0$},
\end{equation}
to obtain binomial coefficients where $b_1$ only appears in the top parameter. 
\item As for the remaining entries in row $1$ to $n_2$, they are binomial coefficients of the form $\binom{s}{b_1+t}$ where $s<b_1+t$. In order to see this, observe that the extreme case with regard to this inequality is when $i=n_2$ and $j=1$. In this case we need to show that 
$$
n_2 \le \left(n_3 + \frac{a}{2} + b_3 + k_1\right)+b_1+\left(n_1-2 k_1\right).
$$
However, this is obvious: $n_3+\frac{a}{2}+b_3+k_1$ is the ``lattice'' distance of the satellite of size $b_1$ from the bottom of the hexagon, thus $\left(n_3 + \frac{a}{2} + b_3 + k_1\right)+b_1$ is the 
``lattice'' distance between the top of this satellite to the bottom of the hexagon and going $n_1-2 k_1$ unit steps from this top into $\nwarrow$-direction will bring us to a point on the side of length $n_1+a+b_1+b_2+b_3$, which is thus surely above the side of length $n_2$. 

We claim that this implies that $(-1)^{b_1} \binom{s}{b_1+t}$ is polynomial in $b_1$: 
We use the second elementary transformation for binomial coefficients, i.e.,
\begin{equation}
\label{elementary2}
\binom{n}{k} = (-1)^{k} \binom{k-n-1}{k} 
\end{equation}
to see that 
$$
\binom{s}{b_1+t} = (-1)^{b_1+t} \binom{b_1+t-s-1}{b_1+t} = (-1)^{b_1+t} \binom{b_1+t-s-1}{-s-1},
$$
where the last step follows from the symmetry \eqref{sym} which can be applied since $b_1+t-s-1 \ge 0$.
\end{itemize}
It follows that we obtain a formula that is a polynomial function in $b_1$ and coincides with the original formula for even $b_1$ if we do the following: 
\begin{itemize}
\item Multiply the expression with $(-1)^{(n_1+n_2+n_3+\frac{3a}{2}+k_1)b_1}$.
\item Multiply the entries in the first $n_2$ rows and right of column $n_3+\frac{a}{2}+b_3+k_1$ with $(-1)^{b_1}$.
\end{itemize}
If we ``reverse'' after this modification our calculation so that we again have a block that corresponds to the satellite of size $b_1$, we obtain 
\begin{multline*}
(-1)^{(n_1+n_2+n_3+\frac{3a}{2}+k_1)b_1} 
\prod_{i=1}^{b_3} \fd_{c_{3,i}}^{n_3-2 k_3} \prod_{i=1}^{a} \fd_{c_{4,i}}^{n_3+b_3} \prod_{i=1}^{b_1} \fd_{c_{5,i}}^{n_3+\frac{a}{2}+b_3+k_1} \prod_{i=1}^{b_2} \fd_{c_{6,i}}^{n_3+\frac{a}{2}+b_3+k_2} \\
\det \left( \begin{matrix*}[l]   \binom{c_{1,i}-d_1}{j-1} (-1)^{[j \ge n_3+\frac{a}{2}+b_1+b_3+k_1+1] b_1} & {1 \le i \le n_2}   \vspace{2mm} \\ \binom{c_{2,i}-d_1}{j-1} & {1 \le i \le n_1 }  \vspace{2mm} \\ \binom{c_{3,i}-d_1}{j-1} & {1 \le i \le b_3 } \vspace{2mm} \\ \binom{c_{4,i}-d_1}{j-1} &  {1 \le i \le a } \vspace{2mm} \\ \binom{c_{5,i}-d_1}{j-1} & {1 \le i \le b_1} \vspace{2mm} \\ \binom{c_{6,i}-d_1}{j-1} & {1 \le i \le b_2} \end{matrix*} \right)_{1 \le j \le n}
\end{multline*}
at $c_{1,i}=i,$ $c_{2,i}=n_2+n_3+a+b_1+b_2+b_3+i$,$c_{3,i}=n_1+\frac{a}{2}+b_1+k_2+i$, $c_{4,i}=n_1+b_1+i$, $c_{5,i}=n_1 -2 k_1+i$, $c_{6,i}=n_1+\frac{a}{2}+b_1+k_2+i$, provided that $d_1=n_1-2 k_1+1$.
Note that $(-1)^{[j \ge n_3+\frac{a}{2}+b_1+b_3+k_1+1] b_1}$ can actually be replaced by any 
$(-1)^{[j \ge n_3+\frac{a}{2}+l+b_3+k_1+1] b_1}$ with $0 \le l \le b_1$: when ``eliminating'' a block it becomes apparent that the values of certain entries do not play a role at all. When we reverse the procedure, we are free to choose the values conveniently. We choose $l=0$ in the following.

Now observe that, by the Chu-Vandermonde summation, 
$$
\binom{c-d_2}{j-1} = \sum_{q=1}^{j} \binom{c-d_1}{q-1} \binom{d_1-d_2}{j-q}, 
$$
and multiply the matrix underlying the determinant from the right with the upper triangular matrix $(\binom{d_1-d_2}{j-i})_{1 \le i,j \le n}$ with determinant $1$.  This gives the following matrix.
$$
\left( \begin{matrix*}[l] \binom{c_{1,i}-d_2}{j-1} + ((-1)^{b_1}-1)  \sum\limits_{q=n_3+\frac{a}{2}+b_3+k_1+1}^{j}  \binom{c_{1,i}-d_1}{q-1} \binom{d_1-d_2}{j-q}  & {1 \le i \le n_2}  \vspace{2mm} \\ \binom{c_{2,i}-d_2}{j-1} & {1 \le i \le n_1 }  \vspace{2mm} \\ \binom{c_{3,i}-d_2}{j-1} & {1 \le i \le b_3 } \vspace{2mm} \\ \binom{c_{4,i}-d_2}{j-1} & {1 \le i \le a } \vspace{2mm} \\ \binom{c_{5,i}-d_2}{j-1} & {1 \le i \le b_1} \vspace{2mm} \\ \binom{c_{6,i}-d_2}{j-1} & {1 \le i \le b_2} \end{matrix*} \right)_{1 \le j \le n}
$$
Specializing the $c_{l,i}$, we obtain the following expression.
\begin{multline*}
(-1)^{(n_1+n_2+n_3+\frac{3a}{2}+k_1)b_1}  \\
\times \det \left( \begin{matrix*}[l] \binom{i-d_2}{j-1} + ((-1)^{b_1}-1)  \sum\limits_{q=n_3+\frac{a}{2}+b_3+k_1+1}^{j}  \binom{i-d_1}{q-1} \binom{d_1-d_2}{j-q}  & {1 \le i \le n_2}  \vspace{2mm} \\ \binom{n_2+n_3+a+b_1+b_2+b_3+i-d_2}{j-1} & {1 \le i \le n_1 }  \vspace{2mm} \\ \binom{n_1+\frac{a}{2}+b_1+k_2+i-d_2}{j-1-n_3+2 k_2} & {1 \le i \le b_3 } \vspace{2mm} \\ \binom{n_1+b_1+i-d_2}{j-1-n_3-b_3} & {1 \le i \le a } \vspace{2mm} \\ \binom{n_1 -2 k_1+i-d_2}{j-1-n_3-\frac{a}{2}-b_3-k_1} & {1 \le i \le b_1} \vspace{2mm} \\ \binom{n_1+\frac{a}{2}+b_1+k_2+i-d_2}{j-1-n_3-\frac{a}{2}-b_3-k_2} & {1 \le i \le b_2} \end{matrix*} \right)_{1 \le j \le n}
\end{multline*}
We set $d_2=n_1+\frac{a}{2}+b_1+k_2+1$. (The assumption $k_2=k_3$ is now useful because it allows us to eliminate the blocks of the satellites of sizes $b_2$ and $b_3$ simultaneously.) With this, all entries in first row of the bottom block are zero except for the one in column $j=1+n_3+\frac{a}{2}+b_3+k_2$, and so we expand with respect to this row. We can keep doing this until the bottom block vanishes and obtain the following.
\begin{multline*}
(-1)^{(n_1+n_2+n_3+\frac{3a}{2}+k_1)b_1+(n_1+n_2+n_3+\frac{3a}{2} +b_1+k_2)b_2} \\
\times \det \left( \begin{matrix*}[l] \binom{i-d_2}{j-1} + ((-1)^{b_1}-1)  \sum\limits_{q=n_3+\frac{a}{2}+b_3+k_1+1}^{j}  \binom{i-d_1}{q-1} \binom{d_1-d_2}{j-q} & {1 \le i \le n_2}  \vspace{2mm} \\ \binom{n_2+n_3+a+b_1+b_2+b_3+i-d_2}{j-1} & {1 \le i \le n_1 }  \vspace{2mm} \\ \binom{n_1+\frac{a}{2}+b_1+k_2+i-d_2}{j-1-n_3+2 k_2} & {1 \le i \le b_3 } \vspace{2mm} \\ \binom{n_1+b_1+i-d_2}{j-1-n_3-b_3} & {1 \le i \le a } \vspace{2mm} \\ \binom{n_1 -2 k_1+i-d_2}{j-1-n_3-\frac{a}{2}-b_3-k_1} & {1 \le i \le b_1}  \end{matrix*} \right)_{1 \le j \le n_3+\frac{a}{2}+b_3+k_2 \atop n_3+\frac{a}{2}+b_2+b_3+k_2+1 \le j \le n}
\end{multline*}
The first $n_3+\frac{a}{2}+b_3+k_2$ columns of the matrix underlying the determinant are 
$$\left( \begin{matrix*}[l] \binom{i-d_2}{j-1} + ((-1)^{b_1}-1)  \sum\limits_{q=n_3+\frac{a}{2}+b_3+k_1+1}^{j}  \binom{i-d_1}{q-1} \binom{d_1-d_2}{j-q} & {1 \le i \le n_2}  \vspace{2mm} \\ \binom{n_2+n_3+a+b_1+b_2+b_3+i-d_2}{j-1} & {1 \le i \le n_1 }  \vspace{2mm} \\ \binom{n_1+\frac{a}{2}+b_1+k_2+i-d_2}{j-1-n_3+2 k_2} & {1 \le i \le b_3 } \vspace{2mm} \\ \binom{n_1+b_1+i-d_2}{j-1-n_3-b_3} & {1 \le i \le a } \vspace{2mm} \\ \binom{n_1 -2 k_1+i-d_2}{j-1-n_3-\frac{a}{2}-b_3-k_1} & {1 \le i \le b_1}  \end{matrix*} \right)_{1 \le j \le n_3+\frac{a}{2}+b_3+k_2}.
$$
Only the entries in the second block depend on $b_2$, and, since $b_2$ appears only in the upper parameter of the binomial coefficient, these entries are polynomials in $b_2$. The matrix consisting of the remaining columns can be written as follows
$$\left( \begin{matrix*}[l] \binom{i-d_2}{j+n_3+\frac{a}{2}+b_2+b_3+k_2-1} + ((-1)^{b_1}-1)  \sum\limits_{q=n_3+\frac{a}{2}+b_3+k_1+1}^{j+n_3+\frac{a}{2}+b_2+b_3+k_2}  \binom{i-d_1}{q-1} \binom{d_1-d_2}{j+n_3+\frac{a}{2}+b_2+b_3+k_2-q} & {1 \le i \le n_2}  \vspace{2mm} \\ \binom{n_2+n_3+a+b_1+b_2+b_3+i-d_2}{j+n_3+\frac{a}{2}+b_2+b_3+k_2-1} & {1 \le i \le n_1 }  \vspace{2mm} \\ \binom{n_1+\frac{a}{2}+b_1+k_2+i-d_2}{j+\frac{a}{2}+b_2+b_3+ 3 k_2-1} & {1 \le i \le b_3 } \vspace{2mm} \\ \binom{n_1+b_1+i-d_2}{j+\frac{a}{2}+b_2+k_2-1} & {1 \le i \le a } \vspace{2mm} \\ \binom{n_1 -2 k_1+i-d_2}{j+b_2+k_2-k_1-1} & {1 \le i \le b_1}  \end{matrix*} \right),
$$
where $1 \le j \le n_1+n_2-n_3+\frac{a}{2}+b_1-k_2$. We analyze the different blocks of the matrix:
\begin{itemize}
\item Top block: In $\binom{i-d_2}{j+n_3+\frac{a}{2}+b_2+b_3+k_2-1}$, the upper parameter is always less than the lower parameter, and so, in analogy to a situation for $b_1$, this binomial coefficient is a polynomial function in $b_2$ after multiplication with $(-1)^{b_2}$. Now, as $d_1-d_2 < 0$ (unless $b_1=0$ in which case the entry simplifies to the binomial coefficient that was already discussed), $\binom{d_1-d_2}{j+n_3+\frac{a}{2}+b_2+b_3+k_2-q}$ is a polynomial function in $b_2$ and $q$ when multiplied with $(-1)^{b_2+q}$. In case $i-d_1$ is non-negative, we can sum over all $q$ less than or equal to $i-d_1+1$ (because otherwise the binomial coefficient $\binom{i-d_1}{q-1}$ is zero), and, since $b_2$ has now disappeared from the upper bound in the summation, the entry is seen to be a polynomial function in $b_2$ after multiplication with $(-1)^{b_2}$. If, however $i-d_1$ is negative, then $\binom{i-d_1}{q-1}$ is a polynomial in $q$ after multiplication with $(-1)^{q}$, and so the summand $\binom{i-d_1}{q-1} \binom{d_1-d_2}{j+n_3+\frac{a}{2}+b_2+b_3+k_2-q}$ is a polynomial function in $q$. Using the fact that $\sum\limits_{i=a}^{b} p(i)$ is a polynomial function in $a$ and $b$ if $p(i)$ is a polynomial in $i$, it follows that also in this case, the entry is a polynomial function in $b_2$ after multiplication with $(-1)^{b_2}$.
\item Second block: $b_2$ appears in the upper parameter as well as in the lower parameter of the binomial coefficient. As the upper parameter is non-negative, the symmetry can be applied in order to remove $b_2$ from the lower parameter.
\item As for the remaining blocks, the entries are always of the form $\binom{s}{b_2+t}$ where $s < b_2+t$, which implies that these entries are polynomial functions after multiplication with $(-1)^{b_2}$.
\end{itemize}
Summarizing we see that, in order to transform the determinant formula into a polynomial function in $b_2$, we need to do the following.
\begin{itemize}
\item Multiply with $(-1)^{(n_1+n_2+n_3+\frac{3a}{2} +b_1+k_2)b_2}$.
\item Multiply the entries in the columns right of the column $n_3+\frac{a}{2}+b_3+k_2$ with $(-1)^{b_2}$, except for those in the second block.
\end{itemize}
Since there are $n_1+n_2-n_3+\frac{a}{2}+b_1-k_2$ columns right of the column $n_3+\frac{a}{2}+b_3+k_2$, this is equivalent to the following.
\begin{itemize}
\item Multiply only the entries in the second block right of the column $n_3+\frac{a}{2}+b_3+k_2$ with $(-1)^{b_2}$.
\end{itemize}
Going back in our calculation and reintroducing a block with $b_2$ rows, we obtain 
\begin{multline*}
(-1)^{(n_1+n_2+n_3+\frac{3a}{2}+k_1)b_1}
 \times \det \left( \begin{matrix*}[l]  \binom{i-d_2}{j-1} + ((-1)^{b_1}-1)  \sum\limits_{q=n_3+\frac{a}{2}+b_3+k_1+1}^{j}  \binom{i-d_1}{q-1} \binom{d_1-d_2}{j-q}   & {1 \le i \le n_2}  \vspace{2mm} \\  (-1)^{[j \ge n_3+\frac{a}{2}+b_3+k_2+1] b_2} \binom{n_2+n_3+a+b_1+b_3+i-d_2}{j-1} & {1 \le i \le n_1 }  \vspace{2mm} \\   \binom{n_1+\frac{a}{2}+b_1+k_2+i-d_2}{j-1-n_3+2 k_2} & {1 \le i \le b_3 } \vspace{2mm} \\   \binom{n_1+b_1+i-d_2}{j-1-n_3-b_3} & {1 \le i \le a } \vspace{2mm} \\   \binom{n_1 -2 k_1+i-d_2}{j-1-n_3-\frac{a}{2}-b_3-k_1} & {1 \le i \le b_1} \vspace{2mm} \\  \binom{n_1+\frac{a}{2}+b_1+k_2+i-d_2}{j-1-n_3-\frac{a}{2}-b_3-k_2} & {1 \le i \le b_2} \end{matrix*} \right)_{1 \le j \le n}.
\end{multline*}
As for $b_3$, a similar argument shows that we need to make the adjustment only in the top block. This concludes the proof of the theorem.
\end{proof}


\subsection{Polynomiality in $a$}

The purpose of this section is to modify the formula in Theorem~\ref{oddb} to reveal the polynomiality in $a$. More specifically, we prove the following theorem.

\begin{theo}
\label{evenodd}
Let
$$
A  = \left( \begin{matrix*}[l] \binom{i-n_1-b_1-1}{j-1} + ((-1)^{b_3} - 1) \sum\limits_{p=1+n_3-2 k_2}^j \binom{i-n_1-\frac{a}{2}-b_1-k_2-1)}{p-1}   \binom{\frac{a}{2}+k_2}{j-p}  & 1 \le i \le n_2
  \vspace{2mm} \\ \binom{-n_1+n_2+n_3+a+b_2+b_3+i-1}{j-1} 
  \vspace{2mm} &  1 \le i \le n_1 \\  \binom{\frac{a}{2}+k_2+i-1}{j-1-n_3+2 k_2}  & 1 \le i \le b_3 \vspace{2mm} \\   0  & 1 \le i \le b_1 \vspace{2mm} \\  0 & 1 \le i \le b_2 \end{matrix*} \right)_{1 \le j \le  n_3+b_3}
$$
and 
$$
\renewcommand\arraystretch{1.3}
B'= 
\left( \begin{array}{ll} (-1)^{j+n_3-1}  \binom{n_1+n_3+a+b_1+b_3-i+j-1}{n_1+b_1-i}    \\ 
\quad + \left((-1)^{b_1}-1 \right) (-1)^{j+n_3-1} \sum\limits_{q=n_3+\frac{a}{2}+b_3+k_1+1}^{j+n_3+a+b_3}    \binom{n_1-2 k_1-i+q-1}{n_1-2 k_1-i}  \binom{j+n_3+a+b_1+b_3+2 k_1-q-1}{b_1+2 k_1-1}  & 1 \le i \le n_2 \\ \hline \binom{-n_1+n_2+n_3+a+b_2+b_3+i-1}{
-n_1+n_2+b_2+i-j} \\ \quad +  \left((-1)^{b_2}-1 \right) 
\sum\limits_{p=1}^{-n_1+n_2+b_2-2 k_2+i} \binom{-n_1+n_2+n_3+\frac{a}{2}+b_2+b_3-k_2+i-1}{-n_1+n_2+b_2-2 k_2+i-p} \binom{\frac{a}{2}+k_2}{2 k_2-j+p}  & 1 \le i \le n_1  \\ \hline   0  & 1 \le i \le b_3  \\ \hline      (-1)^{j}\binom{\frac{a}{2}+b_1 + k_1 - i+j-1}{b_1 + 2 k_1 - i} & 1 \le i \le b_1 \\ \hline  \binom{\frac{a}{2}+k_2+i-1}{2 k_2+i-j} & 1 \le i \le b_2  \end{array} \right), 
$$
where the range of $j$ in $B'$ is $1 \le j \le n_1+n_2-n_3+b_1+b_2$. Then the number of lozenge tilings of $S_{n_1,n_2,n_3,a,b_1,b_2,b_3,k_1,k_2,k_2}$ is the absolute value of $\det \left( A \, | \, B' \right)$,
which is obviously a polynomial in $a$ since all matrix entries are polynomials in $a$.
\end{theo}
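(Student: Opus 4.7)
The plan is to start from the determinantal formula in Theorem~\ref{oddb} and apply a column transformation followed by a generalized Laplace expansion to eliminate the $a$ rows and $a$ columns corresponding to the core. Write $M$ for the $n\times n$ matrix appearing in Theorem~\ref{oddb}, with $n=n_1+n_2+a+b_1+b_2+b_3$. First I would multiply $M$ on the right by the upper unitriangular matrix $T$ whose entries are $T_{kj}=\binom{\frac{a}{2}+k_2}{j-k}$; since $\det T=1$, this preserves the determinant. By the Vandermonde--Chu convolution, the upper parameters of the binomials in the lower four row blocks of $M$ each pick up a shift of $\frac{a}{2}+k_2$; in particular the core block is converted into $\binom{i-1}{j-1-n_3-b_3}$, which vanishes outside the column set $J=\{n_3+b_3+1,\dotsc,n_3+b_3+a\}$ and is unit lower triangular when restricted to columns in~$J$.

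The second step is a generalized Laplace expansion along the core rows $I=\{n_1+n_2+b_3+1,\dotsc,n_1+n_2+b_3+a\}$. Because the transformed core block vanishes outside columns in $J$, only the column subset $J$ contributes, giving
\[
\det(MT)=(-1)^{\Sigma}\det\big((MT)_{I,J}\big)\det\big((MT)_{\bar I,\bar J}\big)=\det\big((MT)_{\bar I,\bar J}\big),
\]
since $\det((MT)_{I,J})=1$ (Pascal matrix) and $\Sigma=\sum_{i\in I}i+\sum_{j\in J}j=a(n_1+n_2+n_3+2b_3)+a(a+1)$ is even (as $a$ is even). Thus $|\det M|=|\det((MT)_{\bar I,\bar J})|$ is now an $(n-a)\times(n-a)$ determinant whose size no longer depends on~$a$.

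The third and most delicate step is to identify $(MT)_{\bar I,\bar J}$ with the matrix $(A\mid B')$, modulo row-wise sign changes (which do not affect $|\det|$). For the first $n_3+b_3$ columns the identification with $A$ is direct: the inner $((-1)^{b_1}-1)$ correction in the top block and the $((-1)^{b_2}-1)$ correction in the second block both vanish, since in both cases the relevant factor $\binom{\frac{a}{2}+k_2}{j-k}$ has negative lower argument; similarly the transformed $b_1$- and $b_2$-blocks are zero on these columns because their lower parameters become negative. For the last $n_1+n_2-n_3+b_1+b_2$ columns, after reindexing by $\tilde j=j-(n_3+b_3+a)$, one repeatedly applies the identities $\binom{n}{k}=(-1)^k\binom{k-n-1}{k}$ (valid for $k\ge 0$) and $\binom{n}{k}=\binom{n}{n-k}$ (valid for $n\ge 0$) to rewrite each block in the polynomial-in-$a$ form displayed in~$B'$.

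The main obstacle will be the sign bookkeeping in this last step. For example, converting the $b_1$-block entry $\binom{-b_1-2k_1+i-1}{\tilde j+\frac{a}{2}-k_1-1}$ into the form $(-1)^{\tilde j}\binom{\frac{a}{2}+b_1+k_1-i+\tilde j-1}{b_1+2k_1-i}$ of $B'$ produces an extra row-constant factor of $(-1)^{\frac{a}{2}-k_1-1}$, and the top block analogously picks up a factor $(-1)^{b_3+a}=(-1)^{b_3}$. One must verify that every such row-constant factor can be pulled out of the determinant cleanly, contributing only an overall sign that is absorbed by passing to the absolute value. Polynomiality in $a$ is then automatic from the structure of the entries: each entry of $A$ and $B'$ is a finite sum of products of binomial coefficients of the form $\binom{\pm\frac{a}{2}+c}{m}$ with $c,m$ independent of $a$, each of which is either a polynomial in $a$ of degree $m$ (when the lower argument is nonnegative) or identically~$0$.
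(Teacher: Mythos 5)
Your proposal is essentially the paper's own proof: your matrix $T$ with entries $\binom{\frac{a}{2}+k_2}{j-k}$ is exactly the paper's right-multiplication by $\bigl(\binom{d_2-d_3}{j-i}\bigr)$ with $d_3=n_1+b_1+1$, after which the core block becomes $\binom{i-1}{j-1-n_3-b_3}$ and is eliminated (the paper expands row by row rather than via a block Laplace expansion, which is equivalent), and the identification of the surviving columns with $A$ and $B'$ via the binomial identities \eqref{sym} and \eqref{elementary2}, with the residual signs absorbed into the absolute value using the evenness of $a$, is precisely the paper's concluding step. The only point the paper treats more carefully is that the column-dependent sign factors $(-1)^{[j\ge\cdot]\,b_3}$ and $(-1)^{[j\ge\cdot]\,b_2}$ in the top two blocks do not just shift under the convolution but produce the correction sums appearing in $A$ and $B'$ (and the leftover $(-1)^{b_3}$ you mention in the top block in fact cancels against the sign from \eqref{elementary2} once $a$ is even, so no non-pullable factor remains).
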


\begin{proof} 
We need to eliminate the fourth block in the formula in Theorem~\ref{oddb}. Note that this formula can also be written (up to sign) as follows: 
\begin{multline*}
(-1)^{(n_1+n_2+n_3+\frac{3a}{2}+k_1)b_1 +(n_1+n_2+n_3)b_3}
  \\
\times \prod_{i=1}^{b_3} \fd_{c_{3,i}}^{n_3-2 k_2} \prod_{i=1}^{a} \fd_{c_{4,i}}^{n_3+b_3} \prod_{i=1}^{b_1} \fd_{c_{5,i}}^{n_3+\frac{a}{2}+b_3+k_1} \prod_{i=1}^{b_2} \fd_{c_{6,i}}^{n_3+\frac{a}{2}+b_3+k_2} \\
\det \left( \begin{matrix*}[l] 
(-1)^{[j \ge 1 + n_3-2 k_2] b_3} 
\left( \binom{c_{1,i}-d_2}{j-1} + ((-1)^{b_1}-1) \sum\limits_{q=n_3+\frac{a}{2}+b_3+k_1+1}^{j} \binom{c_{1,i}-d_1}{q-1} \binom{d_1-d_2}{j-q} \right) &
{1 \le i \le n_2}  \vspace{2mm} \\
(-1)^{[j \ge n_3+\frac{a}{2}+b_3+k_2+1] b_2} \binom{c_{2,i}-d_2}{j-1} & {1 \le i \le n_1 }  \vspace{2mm} \\ 
 \binom{c_{3,i}-d_2}{j-1} & {1 \le i \le b_3 } \vspace{2mm} \\ 
 \binom{c_{4,i}-d_2}{j-1} & {1 \le i \le a } \vspace{2mm} \\ 
 \binom{c_{5,i}-d_2}{j-1} & {1 \le i \le b_1} \vspace{2mm} \\ 
 \binom{c_{6,i}-d_2}{j-1} & {1 \le i \le b_2} 
\end{matrix*} \right)_{1 \le j \le n}
\end{multline*}
evaluated at $c_{1,i}=i,$ $c_{2,i}=n_2+n_3+a+b_1+b_2+b_3+i$,$c_{3,i}=n_1+\frac{a}{2}+b_1+k_2+i$, $c_{4,i}=n_1+b_1+i$, $c_{5,i}=n_1 -2 k_1+i$, $c_{6,i}=n_1+\frac{a}{2}+b_1+k_2+i$, $d_1=n_1-2 k_1+1$ and $d_2=n_1+\frac{a}{2}+b_1+k_2+1$. 

We multiply the matrix underlying the determinant from the right with the upper triangular matrix $(\binom{d_2-d_3}{j-i})_{1 \le i,j \le n}$ with determinant $1$. In block $l$, $3 \le l \le 6$, the entry is then replaced by $\binom{c_{l,i}-d_3}{j-1}$. In the second block, we have 
\begin{multline*}
\sum_{p=1}^{n} (-1)^{[p \ge n_3+\frac{a}{2}+b_3+k_2+1] b_2}  \binom{c_{2,i}-d_2}{p-1} \binom{d_2-d_3}{j-p} \\ = \binom{c_{2,i}-d_3}{j-1} + ((-1)^{b_2}-1) 
\sum_{p=n_3+\frac{a}{2}+b_3+k_2+1}^{j} \binom{c_{2,i}-d_2}{p-1} \binom{d_2-d_3}{j-p}.
\end{multline*}
As for the top block, using $n_3 + \frac{a}{2} + b_3 + k_2 +1 \ge n_3 - 2 k_2 +1$ as well as the Chu-Vandermonde summation, 
we have 
\begin{multline*}
\sum_{p=1}^{n}  (-1)^{[p \ge 1+n_3-2 k_2+b_3] b_3}  \binom{c_{1,i}-d_2}{p-1}   \binom{d_2-d_3}{j-p}  \\  +  \left( (-1)^{b_1+b_3}+(-1)^{1+b_3} \right) \sum_{p \ge 1, q \ge n_3+\frac{a}{2}+b_3+k_1+1}  \binom{c_{1,i}-d_1}{q-1}  \binom{d_1-d_2}{p-q}  \binom{d_2-d_3}{j-p} \\
=  \binom{c_{1,i}-d_3}{j-1} + ((-1)^{b_3} - 1) \sum_{p=1+n_3-2 k_2}^j \binom{c_{1,i}-d_2}{p-1}   \binom{d_2-d_3}{j-p}  \\
+ \left((-1)^{b_1+b_3}-(-1)^{b_3} \right) \sum_{q=n_3+\frac{a}{2}+b_3+k_1+1}^{j}   \binom{c_{1,i}-d_1}{q-1}  \binom{d_1-d_3}{j-q}.
\end{multline*}
We obtain the following 
\begin{multline*}
\renewcommand\arraystretch{1.3}
(-1)^{(n_1+n_2+n_3+\frac{3a}{2}+k_1)b_1 +(n_1+n_2+n_3)b_3}  \\
\times 
\det \left( \begin{array}{ll}  \binom{c_{1,i}-d_3}{j-1} + ((-1)^{b_3} - 1) \sum\limits_{p=1+n_3-2 k_2}^j \binom{c_{1,i}-d_2}{p-1}   \binom{d_2-d_3}{j-p} \\
\quad + \left((-1)^{b_1+b_3}-(-1)^{b_3} \right) \sum\limits_{q=n_3+\frac{a}{2}+b_3+k_1+1}^{j}   \binom{c_{1,i}-d_1}{q-1}  \binom{d_1-d_3}{j-q}  & {1 \le i \le n_2}  \\ \hline   \binom{c_{2,i}-d_3}{j-1} + ((-1)^{b_2}-1) 
\sum\limits_{p=n_3+\frac{a}{2}+b_3+k_2+1}^{j} \binom{c_{2,i}-d_2}{p-1} \binom{d_2-d_3}{j-p}   & {1 \le i \le n_1} \vspace{2mm} \\  \hline \binom{c_{3,i}-d_3}{j-1-n_3+2 k_2} & {1 \le i \le b_3 } \vspace{2mm} \\ \hline  \binom{c_{4,i}-d_3}{j-1-n_3-b_3} & {1 \le i \le a }  \\ \hline   \binom{c_{5,i}-d_3}{j-1-n_3-\frac{a}{2}-b_3-k_1} & {1 \le i \le b_1} \vspace{2mm} \\ \hline  \binom{c_{6,i}-d_3}{j-1-n_3-\frac{a}{2}-b_3-k_2} & {1 \le i \le b_2} \end{array} \right)_{1 \le j \le n}.
\end{multline*}
Evaluating at $c_{1,i}=i,$ $c_{2,i}=n_2+n_3+a+b_1+b_2+b_3+i$,$c_{3,i}=n_1+\frac{a}{2}+b_1+k_2+i$, $c_{4,i}=n_1+b_1+i$, $c_{5,i}=n_1 -2 k_1+i$, $c_{6,i}=n_1+\frac{a}{2}+b_1+k_2+i$ gives
\begin{multline*}
\renewcommand\arraystretch{1.3}
(-1)^{(n_1+n_2+n_3+\frac{3a}{2}+k_1)b_1 +(n_1+n_2+n_3)b_3} \\
\times 
\det \left( \begin{array}{ll} \binom{i-d_3}{j-1} + ((-1)^{b_3} - 1) \sum\limits_{p=1+n_3-2 k_2}^j \binom{i-d_2}{p-1}   \binom{d_2-d_3}{j-p} \\  
\quad + \left((-1)^{b_1+b_3}-(-1)^{b_3} \right) \sum\limits_{q=n_3+\frac{a}{2}+b_3+k_1+1}^{j}   \binom{i-d_1}{q-1}  \binom{d_1-d_3}{j-q}   & {1 \le i \le n_2}   \\ \hline  \binom{n_2+n_3+a+b_1+b_2+b_3+i-d_3}{j-1} \\  \quad + ((-1)^{b_2}-1) 
\sum\limits_{p=n_3+\frac{a}{2}+b_3+k_2+1}^{j} \binom{n_2+n_3+a+b_1+b_2+b_3+i-d_2}{p-1} \binom{d_2-d_3}{j-p}
   & {1 \le i \le n_1}  \\ \hline  \binom{n_1+\frac{a}{2}+b_1+k_2+i-d_3}{j-1-n_3+2 k_2} & {1 \le i \le b_3 }  \\ \hline  \binom{n_1+b_1+i-d_3}{j-1-n_3-b_3} & {1 \le i \le a }  \\ \hline   \binom{n_1 -2 k_1+i-d_3}{j-1-n_3-\frac{a}{2}-b_3-k_1} & {1 \le i \le b_1}  \\ \hline   \binom{n_1+\frac{a}{2}+b_1+k_2+i-d_3}{j-1-n_3-\frac{a}{2}-b_3-k_2} & {1 \le i \le b_2} \end{array} \right)_{1 \le j \le n}.
\end{multline*}
Now we perform the replacement $d_1=n_1-2 k_1+1$ and $d_2=n_1+\frac{a}{2}+b_1+k_2+1$, and specify furthermore $d_3=n_1+b_1+1$.
\begin{multline*}
\renewcommand\arraystretch{1.3}
(-1)^{(n_1+n_2+n_3+\frac{3a}{2}+k_1)b_1 +(n_1+n_2+n_3)b_3}  \\
\times \det \left( \begin{array}{ll} \binom{i-n_1-b_1-1}{j-1} + ((-1)^{b_3} - 1) \sum\limits_{p=1+n_3-2 k_2}^j \binom{i-n_1-\frac{a}{2}-b_1-k_2-1)}{p-1}   \binom{\frac{a}{2}+k_2}{j-p}  \\
\quad + \left((-1)^{b_1+b_3}-(-1)^{b_3} \right) \sum\limits_{q=n_3+\frac{a}{2}+b_3+k_1+1}^{j}   \binom{i-n_1+2 k_1-1)}{q-1}  \binom{-b_1-2 k_1}{j-q} & 1 \le i \le n_2 \\ \hline   \binom{-n_1+n_2+n_3+a+b_2+b_3+i-1}{j-1}  \\  \quad + ((-1)^{b_2}-1) 
\sum\limits_{p=n_3+\frac{a}{2}+b_3+k_2+1}^{j} \binom{-n_1+n_2+n_3+\frac{a}{2}+b_2+b_3-k_2+i-1)}{p-1} \binom{\frac{a}{2}+k_2}{j-p}  & 1 \le i \le n_1  \\ \hline     \binom{\frac{a}{2}+k_2+i-1}{j-1-n_3+2 k_2} & 
1 \le i \le b_3  \\ \hline \binom{i-1}{j-1-n_3-b_3}  & 1 \le i \le a  \\ \hline   \binom{-b_1 -2 k_1+i-1}{j-1-n_3-\frac{a}{2}-b_3-k_1}  & 1 \le i \le b_1 \\ \hline  \binom{\frac{a}{2}+k_2+i-1}{j-1-n_3-\frac{a}{2}-b_3-k_2} & 1 \le i \le b_2 \end{array} \right)_{1 \le j \le n}
\end{multline*}
We can now eliminate the fourth block, and obtain 
\begin{multline*}
\renewcommand\arraystretch{1.3}
(-1)^{(n_1+n_2+n_3+\frac{3a}{2}+k_1)b_1 +(n_1+n_2+n_3)b_3}  \\
\times \det \left( \begin{array}{ll} \binom{i-n_1-b_1-1}{j-1} + ((-1)^{b_3} - 1) \sum\limits_{p=1+n_3-2 k_2}^j \binom{i-n_1-\frac{a}{2}-b_1-k_2-1)}{p-1}   \binom{\frac{a}{2}+k_2}{j-p}  \\
\quad + \left((-1)^{b_1+b_3}-(-1)^{b_3} \right) \sum\limits_{q=n_3+\frac{a}{2}+b_3+k_1+1}^{j}   \binom{i-n_1+2 k_1-1)}{q-1}  \binom{-b_1-2 k_1}{j-q}  & 1 \le i \le n_2  \\ \hline  \binom{-n_1+n_2+n_3+a+b_2+b_3+i-1}{j-1} \\ \quad + ((-1)^{b_2}-1) 
\sum\limits_{p=n_3+\frac{a}{2}+b_3+k_2+1}^{j} \binom{-n_1+n_2+n_3+\frac{a}{2}+b_2+b_3-k_2+i-1)}{p-1} \binom{\frac{a}{2}+k_2}{j-p}  & 1 \le i \le n_1  \\ \hline  \binom{\frac{a}{2}+k_2+i-1}{j-1-n_3+2 k_2}  & 1 \le i \le b_3  \\ \hline   \binom{-b_1 -2 k_1+i-1}{j-1-n_3-\frac{a}{2}-b_3-k_1} & 1 \le i \le b_1  \\ \hline  \binom{\frac{a}{2}+k_2+i-1}{j-1-n_3-\frac{a}{2}-b_3-k_2}  & 1 \le i \le b_2 \end{array} \right), 
\end{multline*}
where the range for $j$ is $1 \le j \le n_3+b_3$ and $n_3+a+b_3+1 \le j \le n$. Note that the entries vanish for $1 \le j \le n_3+b_3$ in blocks $4$ and $5$, as well as for 
$n_3+a+b_3+1 \le j \le n$ in block $3$. Also, for $1 \le j \le n_3+b_3$, the last sums for the entries in block $1$ and $2$ vanish, since the upper parameter in the summation is less than the lower parameter. Now note that the $n_3+b_3$ leftmost columns constitute the matrix $A$ in the statement of the theorem. The entries of $A$ are obviously polynomials in $a$ because $a$  appears only in the upper parameter of the binomial coefficients. We define 
 $$ 
 \renewcommand\arraystretch{1.3}
 B=   \left( \begin{array}{ll}  \binom{i-n_1-b_1-1}{j+n_3+a+b_3-1} + ((-1)^{b_3} - 1) \sum\limits_{p=1+n_3-2 k_2}^{j+n_3+a+b_3} \binom{i-n_1-\frac{a}{2}-b_1-k_2-1)}{p-1}   \binom{\frac{a}{2}+k_2}{j+n_3+a+b_3-p}  \\
\quad + \left((-1)^{b_1+b_3}-(-1)^{b_3} \right) \sum\limits_{q=n_3+\frac{a}{2}+b_3+k_1+1}^{j+n_3+a+b_3}   \binom{i-n_1+2 k_1-1)}{q-1}  \binom{-b_1-2 k_1}{j+n_3+a+b_3-q}    & 1 \le i \le n_2  \\ \hline \binom{-n_1+n_2+n_3+a+b_2+b_3+i-1}{j+n_3+a+b_3-1} \\  \quad + ((-1)^{b_2}-1) 
\sum\limits_{p=n_3+\frac{a}{2}+b_3+k_2+1}^{j+n_3+a+b_3} \binom{-n_1+n_2+n_3+\frac{a}{2}+b_2+b_3-k_2+i-1)}{p-1} \binom{\frac{a}{2}+k_2}{j+n_3+a+b_3-p}  & 1 \le i \le n_1  \\ \hline  0  & 1 \le i \le b_3  \\ \hline   \binom{-b_1 -2 k_1+i-1}{\frac{a}{2}-k_1+j-1} & 1 \le i \le b_1  \\ \hline  \binom{\frac{a}{2}+k_2+i-1}{\frac{a}{2}-k_2+j-1} & 1 \le i \le b_2  \end{array} \right),
 $$
 where $1 \le j \le n_1+n_2-n_3+b_1+b_2$. We know that 
$$
\m(H_{n_1,n_2,n_3,a,b_1,b_2,b_3,k_1,k_2,k_2}) =  |\det ( A \, \vline \, B)|.
$$
The entry in the first block can be simplified as follows: We can extend the first sum to all positive $p$ as $\binom{\frac{a}{2}+k_2}{j+n_3+a+b_3-p}$ vanishes for $1 \le p \le n_3-2 k_2 +b_3$.  Hence, by the Chu-Vandermonde summation, the first sum evaluates to $ \binom{i-n_1-b_1-1}{j+n_3+a+b_3-1}$, which can then be combined with the first term. We obtain 
 $$ 
 \renewcommand\arraystretch{1.3}
 B=  \left( \begin{array}{ll}  (-1)^{b_3} \binom{i-n_1-b_1-1}{j+n_3+a+b_3-1}  \\ \quad + \left((-1)^{b_1+b_3}-(-1)^{b_3} \right) \sum\limits_{q=n_3+\frac{a}{2}+b_3+k_1+1}^{j+n_3+a+b_3}   \binom{i-n_1+2 k_1-1)}{q-1}  \binom{-b_1-2 k_1}{j+n_3+a+b_3-q}   & 1 \le i \le n_2  \\ \hline \binom{-n_1+n_2+n_3+a+b_2+b_3+i-1}{j+n_3+a+b_3-1} \\ \quad + ((-1)^{b_2}-1) 
\sum\limits_{p=n_3+\frac{a}{2}+b_3+k_2+1}^{j+n_3+a+b_3} \binom{-n_1+n_2+n_3+\frac{a}{2}+b_2+b_3-k_2+i-1)}{p-1} \binom{\frac{a}{2}+k_2}{j+n_3+a+b_3-p}  & 1 \le i \le n_1  \\ \hline  0  & 1 \le i \le b_3  \\ \hline    \binom{-b_1 -2 k_1+i-1}{\frac{a}{2}-k_1+j-1} & 1 \le i \le b_1  \\ \hline  \binom{\frac{a}{2}+k_2+i-1}{\frac{a}{2}-k_2+j-1} & 1 \le i \le b_2  \end{array} \right).
 $$
However, using \eqref{sym} as well as \eqref{elementary2}, $B$ can also be written as follows
$$
\renewcommand\arraystretch{1.3}
B=   \left( \begin{array}{ll} (-1)^{j+a+n_3-1}  \binom{n_1+n_3+a+b_1+b_3-i+j-1}{n_1+b_1-i}    \\ 
\quad + \left((-1)^{b_1}-1 \right) (-1)^{j+n_3+a-1} \sum\limits_{q=n_3+\frac{a}{2}+b_3+k_1+1}^{j+n_3+a+b_3}    \binom{n_1-2 k_1-i+q-1}{n_1-2 k_1-i}  \binom{j+n_3+a+b_1+b_3+2 k_1-q-1}{b_1+2 k_1-1}   & 1 \le i \le n_2 \\ \hline  \binom{-n_1+n_2+n_3+a+b_2+b_3+i-1}{
-n_1+n_2+b_2+i-j} \\ \quad +  \left((-1)^{b_2}-1 \right) 
\sum\limits_{p=1}^{-n_1+n_2+b_2-2 k_2+i} \binom{-n_1+n_2+n_3+\frac{a}{2}+b_2+b_3-k_2+i-1}{-n_1+n_2+b_2-2 k_2+i-p} \binom{\frac{a}{2}+k_2}{2 k_2-j+p}  & 1 \le i \le n_1 \\ \hline   0  & 1 \le i \le b_3  \\ \hline     (-1)^{\frac{a}{2}-k_1+j-1}\binom{\frac{a}{2}+b_1 + k_1 - i+j-1}{b_1 + 2 k_1 - i} & 1 \le i \le b_1    \\
\hline  \binom{\frac{a}{2}+k_2+i-1}{2 k_2+i-j} & 1 \le i \le b_2  \end{array} \right).
$$
Now it can be seen that the entries are---up to some signs---polynomials in $a$. Since $a$ is even and we are only interested in the determinant up to sign, we can replace $B$ by 
$B'$ from the statement of the theorem.
\end{proof}

\section{The case $b_1=b_2=b_3=0$ for general $n_1, n_2,n_3$}
\label{exsec}

In this section we demonstrate how to compute the number of lozenge tilings of  $S_{n_1,n_2,n_3,a,0,0,0,0,0,0}$ using the determinant from Theorem~\ref{evenodd}. This establishes a new result --- it gives the number of lozenge tilings of an arbitrary hexagon with a triangular hole of the suitable size\footnote{ So that the resulting region has the same number of up- and down-pointing unit triangles.} removed from a different position than in \cite{CEKZ}. To be precise, in the latter a triangular hole of side $a$ was removed from the center of the hexagon $H$ of side-lengths $n_1$, $n_2+a$, $n_3$, $n_1+a$, $n_2$, $n_3+a$ (counterclockwise from the southeastern edge), while in our result below the distances from the sides of the triangular hole to the NW, NE and S sides of the hexagon are $n_1$, $n_2$ and $n_3$, respectively. One readily sees that this places the triangular hole inside the hexagon if and only if $n_1\leq n_2+n_3$, $n_2\leq n_1+n_3$, and $n_3\leq n_1+n_2$. The two positions agree only if $n_1=n_2=n_3$. For the formulation of the statement, recall that 
$$
\m(S_{n_1,n_2,n_3,a,0,0,0,0,0,0}) = \prod_{i_1=1}^{n_1} \prod_{i_2=1}^{n_2} 
\prod_{i_3=1}^{n_3} \frac{i_1+i_2+i_3-1}{i_1+i_2+i_3-2} =:B(n_1,n_2,n_3)
$$
by MacMahon's box formula \cite{MacM}.

\begin{theo}
\label{newtheo} 
Let $n_1, n_2, n_3$ be non-negative integers with $n_1 \le n_2 \le n_3$
and $n_3 \le n_1 + n_2$ and set 
\begin{multline}
\label{manyfactors}
  Q_{n_1,n_2,n_3}(a)= 
\prod_{i=\lceil (n_1+n_2-1)/2 \rceil}^{\lfloor (n_1+n_3-1)/2 \rfloor} (a+2i+1)^{2i+1-n_3} 
\prod_{i=\lfloor (n_1+n_3-1)/2 \rfloor+1}^{\lfloor (n_2+n_3-1)/2 \rfloor} (a+2i+1)^{n_1} \\
\times \prod_{i=\lfloor n_2+n_3-1)/2 \rfloor+1}^{\lfloor (n_1+n_2+n_3-1)/2 \rfloor} (a+2i+1)^{n_1+n_2+n_3-2 i-1} 
 \prod_{i=\lceil (n_1+n_2+n_3-2)/4 \rceil}^{\lfloor (n_1+n_2-2)/2 \rfloor} (a+2i+1)^{4i+2-n_1-n_2-n_3}  \\ \times \prod_{i= \left \lceil (n_1+n_2)/2 \right \rceil}^{\min \left( \left \lfloor (n_2+n_3-n_1-1)/2 \right \rfloor, \left \lfloor (n_1+n_3)/2 \right \rfloor \right)} (a+2i)^{2i-n_3} \prod_{i= \max \left( \left \lceil (n_1+n_2)/2 \right \rceil, \left \lceil (n_2+n_3-n_1)/2 \right \rceil \right)}^{\left \lfloor (n_1+n_3)/2 \right \rfloor} (a+2i)^{n_2-n_1} \\ \times 
\prod_{i= \max \left( \left \lceil (n_2+n_3-n_1)/2 \right \rceil, \left \lceil (n_1+n_3+1)/2 \right \rceil \right)}^{\left \lfloor (n_2+n_3)/2 \right \rfloor} (a+2i)^{n_2+n_3-2i} \prod_{i=\left \lceil (n_1+n_3+1)/2 \right \rceil}^{ \left \lfloor (n_2+n_3-n_1-1)/2 \right \rfloor} (a+2i)^{n_1} \\ 
\times \prod_{i= \left \lceil (n_1+n_2+n_3)/4  \right \rceil}^{\min\left( \left \lfloor (n_1+n_2-1)/2 \right \rfloor, \left \lfloor (n_2+n_3-n_1-1)/2 \right \rfloor  \right)} (a+2i)^{4i-n_1-n_2-n_3}
\prod_{i= \max \left( \left \lceil (n_2+n_3-n_1)/2 \right \rceil, n_1 \right)}^{\left \lfloor (n_1+n_2-1)/2 \right \rfloor} (a+2i)^{2i-2 n_1} \\
\times \prod_{i=n_2}^{\left \lfloor (n_2+n_3-1)/2 \right \rfloor} (a+2i)^{2i-2 n_2} 
\prod_{i=\left \lceil (n_2+n_3)/2 \right \rfloor}^{n_3} (a+2i)^{2 n_3 - 2 i} \\
\times \prod_{i=1}^{\lfloor (n_1+n_2-n_3)/2 \rfloor} (a+2i)^{2i}  
\prod_{i=\lfloor (n_1+n_2-n_3)/2 \rfloor+1}^{\lfloor (n_1+n_3-n_2)/2 \rfloor}
(a+2i)^{n_1+n_2-n_3}  \prod_{i=\lfloor (n_1+n_3-n_2)/2 \rfloor+1}^{\min(\lfloor (n_2+n_3-n_1)/2 \rfloor, n_1)} (a+2i)^{2 n_1 - 2 i} \\ 
\times \prod_{i=\lfloor (n_2+n_3-n_1)/2 \rfloor +1}^{\lfloor (n_1+n_2+n_3)/4 \rfloor} (a+2i)^{n_1+n_2+n_3-4 i},
\end{multline}
where unlike in \eqref{eba} products are $1$ if the range limits are out of order. Then 
$$
\m(S_{n_1,n_2,n_3,a,0,0,0,0,0,0}) = B_{n_1,n_2,n_3} \frac{Q_{n_1,n_2,n_3}(a)}{Q_{n_1,n_2,n_3}(0)}.
$$
\end{theo}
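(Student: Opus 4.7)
The plan is to specialize the determinantal identity in Theorem~\ref{evenodd} to $b_1=b_2=b_3=0$ and $k_1=k_2=0$, and then apply the identification of factors method advertised in the introduction. At this specialization the ``satellite'' blocks in both $A$ and $B'$ disappear, every correction summation carrying a prefactor $(-1)^{b_i}-1$ vanishes, and the combined matrix collapses to an $(n_1+n_2)\times(n_1+n_2)$ block matrix $M(a)$ built from two row-blocks (of heights $n_2$ and $n_1$) and two column-blocks (of widths $n_3$ and $n_1+n_2-n_3$, the latter non-negative by hypothesis). Its four blocks have the explicit binomial entries
\begin{align*}
A_0^{(1)}(i,j)&=\binom{i-n_1-1}{j-1}, &
B_0^{(1)}(i,j)&=(-1)^{j+n_3-1}\binom{n_1+n_3+a-i+j-1}{n_1-i},\\
A_0^{(2)}(i,j)&=\binom{-n_1+n_2+n_3+a+i-1}{j-1}, &
B_0^{(2)}(i,j)&=\binom{-n_1+n_2+n_3+a+i-1}{-n_1+n_2+i-j},
\end{align*}
all of which are polynomials in $a$. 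Write $D(a):=\det M(a)$; the goal is to show $D(a)=c\cdot Q_{n_1,n_2,n_3}(a)$ for a constant $c=c(n_1,n_2,n_3)$, and then to fix $c$ at $a=0$ using MacMahon.

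First I would establish an upper bound for $\deg_a D(a)$ by summing entry-wise $a$-degrees along an optimal transversal: the $A_0^{(1)}$ block contributes $0$, while the other three blocks contribute at most $j-1$, $n_1-i$, and $-n_1+n_2+i-j$, respectively, using the symmetry $\binom{N+a}{k}=(-1)^k\binom{-N-a+k-1}{k}$ where convenient. A careful transversal count then shows this matches $\deg_a Q_{n_1,n_2,n_3}(a)$. The core task is to produce, for every integer $\ell$ in one of the ranges appearing in \eqref{manyfactors} and every $s$ with $1\le s\le e_\ell$ (where $e_\ell$ is the exponent of $(a+\ell)$), an explicit linear dependence among the rows of $M(-\ell)$ witnessing that $(a+\ell)^{e_\ell}$ divides $D(a)$. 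These dependencies are constructed by pairing rows from the two row-blocks via Chu--Vandermonde contractions: at $a=-\ell$ certain upper-parameter binomials in the $A_0^{(2)}$ row collapse and can be matched with rows from $A_0^{(1)}$ (for $\ell$ in the upper ranges), and analogously for $B_0^{(2)}$ versus $B_0^{(1)}$ (for $\ell$ in the lower ranges).

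The hardest step by far is the case-analysis bookkeeping forced by \eqref{manyfactors}: the exponent $e_\ell$ is a piecewise-linear function of $\ell$, split into eleven consecutive windows whose boundaries are the floors and ceilings of $(n_i+n_j)/2$ and $(n_1+n_2+n_3)/4$, and the parity of $\ell$ switches the construction. A workable template should emerge from first treating $n_1=n_2=n_3$ (which recovers \cite{CEKZ} by the new, much shorter route advertised in the introduction), and then extending to the asymmetric case by moving the window boundaries and allowing the kernel vectors to mix contributions from both blocks near transition points such as $\ell=\lfloor(n_1+n_3)/2\rfloor$. Once all factors are matched and the degrees agree, $D(a)/Q_{n_1,n_2,n_3}(a)$ is a constant; evaluating at $a=0$ and using MacMahon's box formula $\m(S_{n_1,n_2,n_3,0,0,0,0,0,0,0})=B(n_1,n_2,n_3)$ fixes $c=B(n_1,n_2,n_3)/Q_{n_1,n_2,n_3}(0)$ and yields the stated identity.
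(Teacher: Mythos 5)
Your plan follows the paper's own route --- specialize Theorem~\ref{evenodd} to $b_1=b_2=b_3=0$, regard the resulting determinant as a polynomial in $a$, bound its degree, identify the linear factors of \eqref{manyfactors} by the identification of factors method, and normalize at $a=0$ with MacMahon --- but two of your steps, as described, would fail. First, the degree bound: summing the naive entry-wise degrees ($0$, $n_1-i$, $j-1$ and $-n_1+n_2+i-j$ in the four blocks) over an optimal transversal does \emph{not} give $\deg_a Q_{n_1,n_2,n_3}$. Already for $n_1=n_2=n_3=n$, the transversal that places all bottom rows in the left column block and all top rows in the right column block has total entry degree $n(n-1)$, which strictly exceeds the true degree $\lfloor 3n^2/4\rfloor$ for $n\ge 5$; and no rewriting via binomial symmetry can help, since the degree in $a$ of each entry is intrinsic. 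With a non-tight upper bound the concluding step ``degrees match, hence the quotient is constant'' collapses. The paper's Lemma~\ref{degree} first performs telescoping row operations on the bottom block and column operations on the right block (turning, e.g., the bottom-left entries into $\binom{-n_1+n_2+n_3+a}{j-i}$), and only then carries out the transversal maximization; those operations capture cancellations your entry-wise count misses.

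Second, the factor identification: a row dependence of the specialization at $a=-\ell$ must hold across the \emph{entire} row, i.e.\ simultaneously in the $n_3$ left columns and the $n_1+n_2-n_3$ right columns, so pairing a top row with a bottom row ``so that the left blocks match'' (or, separately, ``so that the right blocks match'') is not by itself a dependence. In the paper the odd factors come from $d$-th row differences of the two row blocks that agree in \emph{both} column blocks (Lemma~\ref{first}); one family of even factors comes from pairs whose relevant right-block differences vanish (Lemma~\ref{second}); another family is obtained not from explicit dependencies but from a kernel-dimension count (rows agreeing only in the left block, minus the $n_1+n_2-n_3$ right columns); and the remaining even factors require \emph{column} combinations, namely $d$-th anti-differences matching left-block columns against right-block columns (Lemma~\ref{third}). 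The parameter $d$ is also what produces the multiplicities $e_\ell>1$. Your sketch contains no vanishing condition, no kernel-dimension argument and no column-based dependencies, so a substantial part of the even factors in \eqref{manyfactors} (for instance $\prod_i(a+2i)^{2i}$ and $\prod_i(a+2i)^{2i-2n_2}$) is unaccounted for; together with the degree gap this leaves the argument genuinely incomplete, even though the overall strategy is the right one.
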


Our proof approach is to apply Krattenthaler's \emph{``identification of factors''} method \cite[Sec. 2.4]{KrattDet}, which is in this case not complicated as the linear combinations that prove the zeros\footnote{ When the left hand side is regarded as a polynomial in $a$.} turn out to be quite simple. The situation is somewhat similar for  $\m(S_{n,n,n,a,b,b,b,k,k,k})$: the linear combinations are as simple as those used in this section. There the only additional complication lies in the more elaborate (block) structure of the matrix. One important purpose of this section is to demonstrate on a simpler example the procedure that will be used in a forthcoming paper to compute $\m(S_{n,n,n,a,b,b,b,k,k,k})$ in general.

In the special case $b_1=b_2=b_3=0$, Theorem~\ref{evenodd} provides the following matrix, whose determinant we need to compute:
$$
\renewcommand\arraystretch{1.3}
M_{n_1,n_2,n_3} = \left( \begin{array}{c|c} \binom{i-n_1-1}{j-1}_{1 \le i \le n_2, 1 \le j \le n_3} & (-1)^{j+n_3-1} \binom{n_1+n_3+a-i+j-1}{n_1-i}_{1 \le i \le n_2, 1 \le j \le n_1+n_2-n_3}  \\[10pt]
\hline\\[-10pt]
\binom{-n_1+n_2+n_3+a+i-1}{j-1}_{1 \le i \le n_1, 1 \le j \le n_3}  & 
\binom{-n_1+n_2+n_3+a+i-1}{-n_1+n_2+i-j}_{1 \le i \le n_1, 1 \le j \le n_1+n_2-n_3} \end{array} \right).
$$  
We set 
$$
P_{n_1,n_2,n_3}(a) : = \det \left( M_{n_1,n_2,n_3} \right), 
$$
which is obviously a polynomial in $a$. In the next lemma, we compute an upper bound for the degree of this polynomial. As we will see later, this will turn out to be in fact the actual degree.

\begin{lem} 
\label{degree}
Let $n_1,n_2,n_3$ be non-negative integers. Then
$$
\deg_a P_{n_1,n_2,n_3}(a) \le \Bigl\lfloor \frac{2 n_1 n_2 + 2 n_1 n_3 + 2 n_2 n_3 - n_1^2 - n_2^2 - n_3^2}{4} \Bigr\rfloor.
$$
\end{lem}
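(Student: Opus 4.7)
The plan is to bound $\deg_a P_{n_1,n_2,n_3}(a)$ directly from the Leibniz expansion of $\det M_{n_1,n_2,n_3}$. First I would record the $a$-degree $d_{ij}$ of each entry in the four blocks: the $A$ block is constant in $a$; in the $B$ block, $B_{ij}$ has degree $n_1-i$ in $a$, and vanishes for $i>n_1$ since the lower binomial parameter becomes negative; in the $C$ block, $C_{ij}$ has degree $j-1$; and in the $D$ block, $D_{ij}$ has degree $(n_2-n_1+i-j)^{+}$, with the entry vanishing when this would be negative. This immediately gives the crude upper bound $\deg_a P_{n_1,n_2,n_3} \le \max_\sigma \sum_i d_{i,\sigma(i)}$.

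Next I would set up the right-hand side as a constrained assignment problem, parametrized by the number $k$ of top rows sent to right columns. Within this parametrization the optimizer is essentially forced: top rows $1,\dotsc,k$ go to right columns (giving the largest $B$-degrees), the bottom rows going to left columns take the largest left-column indices, and the bottom rows going to right columns pair largest row-index with smallest right-column index. Adding these contributions yields a concrete quadratic $T(k)$ to be maximized over $k$. For small parameters one checks directly that $T(k^*)$ equals the claimed floor expression; however, for larger parameters the Leibniz bound overshoots, because the leading coefficient of $P$ in $a$ vanishes. Indeed, the maximizing permutations form orbits under independently permuting the top right-going rows among their chosen right columns and the bottom left-going rows among their chosen left columns; since the leading coefficient of $B_{ij}$ in $a$ depends on $j$ only through a sign (and analogously for $C$), the sign characters of these two symmetric groups force cancellation of the top-degree contribution.

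To get the tight bound I would transform the matrix by column operations that destroy this degeneracy without changing the determinant. The sweep $C_j \leftarrow C_j + C_{j+1}$ over the right columns reduces each $B$-entry's $a$-degree by one through a Pascal-type collapse of the form $B_{i,j}+B_{i,j+1} = \pm\binom{\cdot}{\,n_1-i-1\,}$, while simultaneously the $D$-entries reassemble via Pascal's rule into a single binomial of the \emph{same} degree in $a$, merely with the upper parameter shifted by one. Applying a carefully chosen sequence of such sweeps (and analogous sweeps operating on the left columns through the $C$-block) should spread the entry-degrees across columns rather than leaving them constant along columns, so that the Leibniz bound applied to the transformed matrix becomes sharp and equals the claimed value.

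The principal obstacle I anticipate is that the rightmost column is a boundary case for this sweep: it has no neighbour to the right, so the Pascal collapse cannot be applied there, and after any number of sweeps the boundary columns retain entries of maximal degree. One therefore has to either peel off the boundary columns via Laplace expansion before sweeping, or absorb them into the optimization of $T(k)$ with a case analysis based on the parity of $n_1+n_2+n_3$, which is presumably where the $\lfloor\cdot/4\rfloor$ in the claimed bound originates.
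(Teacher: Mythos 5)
Your overall route --- lower the $a$-degrees of the entries by Pascal-type elementary operations, then bound $\deg_a\det$ via a Leibniz/assignment optimization over how many entries a permutation takes from each block --- is exactly the paper's, and your right-column sweep $C_j\leftarrow C_j+C_{j+1}$ is precisely the column manipulation used there (it turns the top-right entries into $\pm\binom{n_1+n_3+a-i+j-1}{-n_2+n_3-i+j}$ while keeping the bottom-right degrees at $-n_1+n_2+i-j$). However, the step you propose for the other half, the ``analogous sweeps operating on the left columns through the $C$-block'', would fail. In the bottom-left block the entry $\binom{-n_1+n_2+n_3+a+i-1}{j-1}$ has $a$ and the row index in the upper parameter and the column index in the lower one: within a fixed column all entries have the same degree $j-1$ with the same leading coefficient $1/(j-1)!$, while entries in different columns have different degrees, so no column combination can cancel a leading term and no column operation lowers these degrees. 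What is needed (and what the paper does) is iterated differencing of consecutive \emph{bottom rows}, which kills the common leading coefficient and brings the bottom-left degrees down from $j-1$ to $j-i$, while the bottom-right entries keep their degree $-n_1+n_2+i-j$ (only the upper parameter shifts, by the same Pascal identity you invoke for $D$). Without this row-differencing your transformed matrix still overshoots the claimed bound, e.g. already for $n_1=n_2=n_3=3$.

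Also, your concern about the last right column is a non-issue, and the Laplace-peeling or parity workaround you suggest is not where the floor comes from. After the correct operations the block degrees are $0$, $-n_2+n_3-i+j$, $j-i$ and $-n_1+n_2+i-j$; a degree-maximizing permutation is forced to take its bottom-left entries from the top-right $k\times k$ corner of that block (and similarly for the other blocks), the total degree becomes an explicit quadratic in $k$, and its maximum over the admissible integer range sits at the possibly half-integer vertex $k=(n_1-n_2+n_3)/2$; rounding that vertex is what produces $\bigl\lfloor(2n_1n_2+2n_1n_3+2n_2n_3-n_1^2-n_2^2-n_3^2)/4\bigr\rfloor$. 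Your sign-character cancellation observation does correctly explain why the crude bound on the untransformed matrix is not sharp, but by itself it only kills the single top-degree coefficient and does not deliver the stated bound.
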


{\it Proof.} We start by modifying the matrix applying a set of elementary row and column operations. First we transform the bottom block consisting of the bottom $n_1$ rows: We subtract the $(n_1+n_2-1)$-st row from the $(n_1+n_2)$-th row, then the $(n_1+n_2-2)$-nd row from the $(n_1+n_2-1)$-st row etc. until we subtract the $(n_2+1)$-st row from the $(n_2+2)$-nd row. We repeat this, but terminate with the subtraction of $(n_2+2)$-nd row from the $(n_2+3)$-rd row. We repeat this loop $n_1-1$ times where in every step we perform one subtraction less than in the previous step. This way we arrive at the matrix
$$
\renewcommand\arraystretch{1.3}
\left( \begin{array}{c|c} \binom{i-n_1-1}{j-1}_{1 \le i \le n_2, 1 \le j \le n_3} & (-1)^{j+n_3-1} \binom{n_1+n_3+a-i+j-1}{n_1-i}_{1 \le i \le n_2, 1 \le j \le n_1+n_2-n_3}  \\[10pt] \hline
  \\[-10pt]
\binom{-n_1+n_2+n_3+a}{j-i}_{1 \le i \le n_1, 1 \le j \le n_3}  & 
\binom{-n_1+n_2+n_3+a}{-n_1+n_2+i-j}_{1 \le i \le n_1, 1 \le j \le n_1+n_2-n_3} \end{array} \right).
$$  
Second we modify the right block consisting of the $n_1+n_2-n_3$ rightmost columns. We add the $(n_3+2)$-nd column to the $(n_3+1)$-st column, the $(n_3+3)$-rd column to the $(n_3+2)$-nd column etc. until we add the $(n_1+n_2)$-nd column to the $(n_1+n_2-1)$-st column. We repeat this, but terminate with the addition of the $(n_1+n_2-1)$-st column to the $(n_1+n_2-2)$-nd column. We repeat this loop $n_1+n_2-n_3-1$ times where in every step we perform one addition less than in the previous step. The result is the following matrix:
$$
\renewcommand\arraystretch{1.3}
\left( \begin{array}{c|c} \binom{i-n_1-1}{j-1}_{1 \le i \le n_2, 1 \le j \le n_3} & (-1)^{n_1+n_2-1} \binom{n_1+n_3+a-i+j-1}{-n_2+n_3 -i+j}_{1 \le i \le n_2, 1 \le j \le n_1+n_2-n_3}  \\[10pt] \hline
\\[-10pt]
  \binom{-n_1+n_2+n_3+a}{j-i}_{1 \le i \le n_1, 1 \le j \le n_3}  & 
\binom{2 n_2+a-j}{-n_1+n_2+i-j}_{1 \le i \le n_1, 1 \le j \le n_1+n_2-n_3} \end{array} \right).
$$  
Now we find the maximal degree in $a$ of the summands in the Leibniz formula of the determinant: Note that in the top left block of the matrix the degree of the entries is $0$, in the top right block it is $-n_2+n_3-i+j$, in the bottom left block it is $j-i$, while in the bottom right it is $-n_1+n_2+i-j$. 

Let $\sigma$ be a permutation that maximizes the degree in $a$ of the corresponding summand in the Leibniz formula of the determinant and suppose that this summand has $k$ entries from the bottom left block. As the degree is maximal in the top right corner of this block, while in the bottom right block the degree is maximal in the bottom left corner and in the top right block the degree is maximal in the top right corner, the $k$ entries coming from the bottom left block are situated in the top right square of size $k$ of this block. Furthermore, there are $n_1-k$ entries from the bottom right block, and they are situated in the bottom left square of size $n_1-k$ in this block. Similarly, the $k+n_2-n_3$ entries from the top right block are situated in the top right square of size $k+n_2-n_3$. The degrees coming from these squares of size $k$, $n_1-k$ and $k+n_2-n_3$, respectively, are the summands of the following expression.
$$
\sum_{i=1}^{k} (n_3+1-2 i) + \sum_{i=1}^{n_1-k} (n_2+1-2 i) + \sum_{i=1}^{k+n_2-n_3}  (n_1+1-2 i)
$$
This expression is equal to 
$$
- 3 k^2 + 3 k n_1 - n_1^2 - 3 k n_2 + 2 n_1 n_2 - n_2^2 + 3 k n_3 - n_1 n_3 + 2 n_2 n_3 - n_3^2.
$$
The maximum of this expression is at $k = \frac{n_1-n_2+n_3}{2}$. Note that we need to require $k \le n_1, n_3$, $n_1-k \le n_1,n_1+n_2-n_3$ and $k+n_2-n_3 \le n_2, n_1+n_2-n_3$, which in summary gives $n_3-n_2 \le k \le \min(n_1,n_3)$. As 
$$
n_3-n_2 \le \frac{n_1-n_2+n_3}{2} \le \min(n_1,n_3),
$$
which basically follows from $n_{x} \le n_{y}+n_{z}$ if $\{x,y,z\} = \{1,2,3\}$ (these are necessary conditions for the removed triangle to be inside of the hexagon, see Figure~\ref{example0}), 
the maximum is attained at $\lfloor \frac{n_1-n_2+n_3}{2} \rfloor$ and 
at $\lceil \frac{n_1-n_2+n_3}{2} \rceil$. The maximum is then 
\begin{multline*}
\Bigl\lfloor \frac{2 n_1 n_2 + 2 n_1 n_3 + 2 n_2 n_3 - n_1^2 - n_2^2 - n_3^2}{4} \Bigr\rfloor \\ 
= \begin{cases}  \frac{2 n_1 n_2 + 2 n_1 n_3 + 2 n_2 n_3 - n_1^2 - n_2^2 - n_3^2}{4} & \text{if $n_1+n_2+n_3 \equiv 0 \mod 2$} \\ 
\frac{2 n_1 n_2 + 2 n_1 n_3 + 2 n_2 n_3 - n_1^2 - n_2^2 - n_3^2-3}{4} & 
\text{if $n_1+n_2+n_3 \equiv 1 \mod 2$. \ \ \ \ \ $\square$} \end{cases}
\end{multline*}

\medskip
The identification of factors method uses the following principle: 
In order to prove that $P_{n_1,n_2,n_3}(a)$ has a zero at $a=i$ of multiplicity $m$, we need to find $m$ independent linear combinations of the rows (or columns) of the $a=i$ specialization of $M_{n_1,n_2,n_3}$. 

The odd zeros (i.e. the linear factors in \label{manyfactors} that become zero for odd values of $a$) are dealt with in the following lemma. If $A=(a_{i,j})$ is an 
$m \times n$ matrix, then the $d$-th (forward) difference with respect to the rows 
is defined to be the following $(m-d) \times n$ matrix:
$$
\left( \Delta^d_i a_{i,j} \right)_{1 \le i \le m-d,1 \le j \le n}.
$$
Clearly, the rows of this matrix are linear combinations of rows of the original matrix.
The definition is analogous for columns or operators different from $\Delta$.
\begin{lem}
\label{first} Let $d \ge 0$.
Assuming $a=i_1-i_2-n_2-n_3$, $i_1+i_2 \equiv n_2+n_3+1 \,  (\mod 2)$ and $i_2+n_3+d \ge i_1+n_1$, the $i_1$-th row of the $d$-th difference with respect to the rows in the top block is equal to the $i_2$-th row of the $d$-th 
difference in the bottom block, provided that $1 \le i_1 \le n_2-d$ and $1 \le i_2 \le n_1-d$.
\end{lem}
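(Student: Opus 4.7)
The plan is to verify the claimed row equality column by column, separating the left block ($1 \le j \le n_3$) from the right block ($1 \le j \le n_1+n_2-n_3$).

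For the left block, the argument is routine. The standard identity $\fd^d_x \binom{x+c}{m} = \binom{x+c}{m-d}$ (obtained by iterating $\fd_x\binom{x+c}{m}=\binom{x+c}{m-1}$) turns the top entry $\binom{i_1-n_1-1}{j-1}$ into $\binom{i_1-n_1-1}{j-1-d}$ and the bottom entry $\binom{-n_1+n_2+n_3+a+i_2-1}{j-1}$ into $\binom{-n_1+n_2+n_3+a+i_2-1}{j-1-d}$. The substitution $a = i_1-i_2-n_2-n_3$ makes the two upper parameters coincide.

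For the right block, a short induction on $d$ (telescoping via Pascal's rule) supplies closed forms for the two $d$-th differences: $\fd^d_{i_1}$ applied to the top-right entry produces
\[ (-1)^d\,(-1)^{j+n_3-1}\binom{n_1+n_3+a-i_1+j-d-1}{n_1-i_1}, \]
while $\fd^d_{i_2}$ applied to the bottom-right entry produces
\[ \binom{-n_1+n_2+n_3+a+i_2-1}{-n_1+n_2+i_2+d-j}, \]
where this time it is the lower parameter that shifts. Substituting $a=i_1-i_2-n_2-n_3$ turns the upper parameter in the bottom line into $-n_1+i_1-1$.

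The heart of the argument is then to apply the upper-negation identity $\binom{x}{k}=(-1)^k\binom{k-x-1}{k}$ to each side. In the generic situation the resulting binomials share the upper parameter $X:=n_2-i_1+i_2-j+d$, and their lower parameters $n_1-i_1$ and $-n_1+n_2+i_2+d-j$ sum to exactly $X$; the symmetry $\binom{X}{k}=\binom{X}{X-k}$ then yields the equality, provided $X\ge 0$. The parity hypothesis $i_1+i_2\equiv n_2+n_3+1\pmod 2$ is precisely what reconciles the signs produced by the two applications of upper-negation. The range hypothesis $i_2+n_3+d\ge i_1+n_1$ enters twice: combined with $j\le n_1+n_2-n_3$ it gives $X\ge (n_3+i_2+d)-(n_1+i_1)\ge 0$, validating the symmetry step; and in the degenerate cases where one of $n_1-i_1$ or $-n_1+n_2+i_2+d-j$ is negative (so that the corresponding binomial is automatically zero), the same inequality forces the other binomial to vanish as well, giving $0=0$. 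The only real subtlety is the parity bookkeeping for the signs; the underlying binomial identities are all elementary.
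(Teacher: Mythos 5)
Your proposal is correct and follows essentially the same route as the paper's proof: compute the closed forms of the $d$-th row differences block by block, substitute $a=i_1-i_2-n_2-n_3$, then apply the upper-negation identity \eqref{elementary2} to both entries and conclude with the symmetry \eqref{sym}, the hypothesis $i_2+n_3+d\ge i_1+n_1$ (together with $j\le n_1+n_2-n_3$) ensuring the common upper parameter $n_2-i_1+i_2-j+d$ is nonnegative and the parity condition matching the signs. Your explicit treatment of the degenerate $0=0$ cases is harmless extra detail already subsumed by the symmetry step.
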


\begin{proof} 
This is obvious for the left block. 
The general entry of the $d$-th difference of the right top block is
$$
\fd_{i}^{d} \binom{n_1+n_3+a-i+j-1}{n_1-i} (-1)^{j+n_3+1} = (-1)^{j+n_3+d+1} \binom{n_1+n_3+a-i+j-1-d}{n_1-i},
$$
while the general entry of the $d$-th difference of the right bottom block is 
$$
\fd_{i}^{d} \binom{-n_1+n_2+n_3+a+i-1}{-n_1+n_2+i-j} = \binom{-n_1+n_2+n_3+a+i-1}{-n_1+n_2+i-j+d}.
$$
After setting $a=i_1-i_2-n_2-n_3$, the entry in row $i_1$ of the $d$-th difference in the top-right block is equal to 
$$
(-1)^{j+n_3+d+1} \binom{n_1-i_2-n_2+j-1-d}{n_1-i_1} = (-1)^{j+n_3+d+1+n_1+i_1} \binom{-i_1+i_2+n_2-j+d}{n_1-i_1}
$$
assuming $i_1 \le n_2-d$ and using \eqref{elementary2},
while the entry in row $i_2$ of the $d$-th difference in the bottom-right block is equal to 
$$
\binom{-n_1+i_1-1}{-n_1+n_2+i_2-j+d} = (-1)^{-n_1+n_2+i_2-j+d} \binom{-i_1+i_2+n_2-j+d}{-n_1+n_2+i_2-j+d}.
$$
The assertion now follows from the symmetry of the binomial coefficient \eqref{sym}.
\end{proof}
For $d=0$, this lemma provides the following linear factors.
\begin{multline*}
\prod_{1 \le i_1 \le n_2, 1 \le i_2 \le n_1 \atop i_1+i_2 \equiv n_2+n_3+1 (\mod 2), i_2 +n_3  \ge i_1 +n_2} 
(a-i_1+i_2+n_2+n_3) \\
= 
\prod_{i=\lceil (n_1+n_2-1)/2 \rceil}^{\lfloor (\min(n_1,n_2)+n_3-1)/2 \rfloor} (a+2i+1)^{2i+1-n_3} 
\prod_{i=\max(\lceil (n_1+n_2-1)/2 \rceil,\lfloor (\min(n_1,n_2)+n_3-1)/2 \rfloor+1)}^{\lfloor (\max(n_1,n_2)+n_3-1)/2 \rfloor} (a+2i+1)^{\min(n_1,n_2)} \\
\times
\prod_{i=\max(\lceil (n_1+n_2-1)/2 \rceil, \lfloor (\max(n_1,n_2)+n_3-1)/2 \rfloor+1)}^{\lfloor (n_1+n_2+n_3-1)/2 \rfloor} (a+2i+1)^{n_1+n_2+n_3-2 i-1}
\end{multline*}
By symmetry, we can assume in the following 
$$n_1 \le n_2 \le n_3.$$ 
The above expression then simplifies to
$$
\prod_{i=\lceil (n_1+n_2-1)/2 \rceil}^{\lfloor (n_1+n_3-1)/2 \rfloor} (a+2i+1)^{2i+1-n_3} 
\prod_{i=\lfloor (n_1+n_3-1)/2 \rfloor+1}^{\lfloor (n_2+n_3-1)/2 \rfloor} (a+2i+1)^{n_1} 
\prod_{i=\lfloor n_2+n_3-1)/2 \rfloor+1}^{\lfloor (n_1+n_2+n_3-1)/2 \rfloor} (a+2i+1)^{n_1+n_2+n_3-2 i-1}.
$$
The degree of this product is 
\begin{multline*} 
\left(-\left\lceil \frac{n_1+n_2-1}{2}
   \right\rceil +\left\lfloor \frac{n_1+n_3-1}{2}
  \right\rfloor +1\right) \left(\left\lceil
   \frac{n_1+n_2-1}{2}
  \right\rceil +\left\lfloor \frac{n_1+n_3-1}{2}
   \right\rfloor -n_3+1\right) \\ +n_1
   \left(\left\lfloor \frac{n_2+n_3-1}{2}
   \right\rfloor -\left\lfloor \frac{n_1+n_3-1}{2}
  \right\rfloor \right) \\ +\left(\left\lfloor
   \frac{n_2+n_3-1}{2}
   \right\rfloor -\left\lfloor \frac{n_1+n_2+n_3-1}{2}
   \right\rfloor \right)
   \left(\left\lfloor \frac{n_1+n_2+n_3-1}{2}
   \right\rfloor +\left\lfloor
   \frac{n_2+n_3-1}{2}
   \right\rfloor
   -n_1-n_2-n_3+2
   \right).
\end{multline*}
Now let $d > 0$. Then we can assume $i_2=i_1+n_1-n_3-d$ (because otherwise the zero is already covered by a smaller $d$) and the relevant zero is $d-n_1-n_2$.
The parity condition is $d \equiv n_1+n_2+1 (\mod 2)$. As $1 \le i_1 \le n_2-d$ and 
$1 \le i_2 \le n_1-d$, we have the following linear factors
\begin{multline*}
 \prod_{d \ge 1 \atop d \equiv n_1+n_2+1 (2)} (a-d+n_1+n_2)^{\max(\min(n_2-d,n_3)-\max(1,1-n_1+n_3+d)+1,0)} \\ = \prod_{i=\lceil (n_1+n_2+n_3-2)/4 \rceil}^{\lfloor (n_1+n_2-2)/2 \rfloor} (a+2i+1)^{4i+2-n_1-n_2-n_3}.
\end{multline*}
The degree of this product is
\begin{multline*}
\left(-\left\lceil \frac{n_1+n_2+n_3-2}{4}
   \right\rceil +\left\lfloor
   \frac{n_1+n_2-2}{2}
   \right\rfloor +1\right) \\
   \times \left(2 \left\lceil
   \frac{n_1+n_2+n_3-2}{4}
   \right\rceil +2 \left\lfloor
   \frac{n_1+n_2-2}{2}
  \right\rfloor
   -n_1-n_2-n_3+2
   \right).
\end{multline*}

In the following lemma, we identify a set of even zeros.
\begin{lem} 
\label{second}
Let $d \ge 0$.  Assuming
$a=i_1-i_2-n_2-n_3, i_1+i_2 \equiv n_2 + n_3 (\mod 2)$, $ n_1+1 \le i_1$ and $-i_1+i_2-n_1+n_3+d \ge 0$, the $i_1$-th row of the $d$-th difference in the top block is equal to the $i_2$-th row of the $d$-th difference in the bottom block, 
provided that $1 \le i_1 \le n_2-d$ and $1 \le i_2 \le n_1-d$.
\end{lem}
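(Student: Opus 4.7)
The plan is to follow the same template as Lemma \ref{first}, except that the parity condition $i_1+i_2 \equiv n_2+n_3 \pmod 2$ flips the sign coming from the $(-1)^{j+n_3-1}$ factor in the right top block, so the symmetry-identity argument used there will no longer produce equality. The extra hypothesis $n_1+1 \le i_1$ is there precisely so that a different mechanism --- outright vanishing --- takes over for the right block.

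For the left block the $d$-th differences agree identically in $a$. Using the identity $\fd_i^d \binom{i+c}{k} = \binom{i+c}{k-d}$, row $i_1$ of the top left becomes $\binom{i_1-n_1-1}{j-1-d}$, while row $i_2$ of the bottom left becomes $\binom{-n_1+n_2+n_3+a+i_2-1}{j-1-d}$; substituting $a = i_1-i_2-n_2-n_3$ collapses the second upper parameter to $i_1-n_1-1$, so the two rows coincide on columns $1 \le j \le n_3$ without requiring the parity assumption.

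For the right block I would replace the sign-flip argument with a vanishing one. The top right entries $(-1)^{j+n_3-1}\binom{n_1+n_3+a-i+j-1}{n_1-i}$ vanish for every $i \ge n_1+1$ because the lower index $n_1-i$ is negative, so all $d+1$ rows $i_1,\dotsc,i_1+d$ of the top right block are zero and hence the $d$-th difference row is zero as well. It then suffices to verify that the corresponding $d$-th difference row of the bottom right block --- which equals $\binom{i_1-n_1-1}{-n_1+n_2+i_2-j+d}$ after substituting $a = i_1-i_2-n_2-n_3$ --- also vanishes throughout $1 \le j \le n_1+n_2-n_3$. Since $i_1-n_1-1 \ge 0$, this binomial coefficient is nonzero exactly in the window $n_2+i_2+d-i_1+1 \le j \le n_2+i_2+d-n_1$, and the hypothesis $-i_1+i_2-n_1+n_3+d \ge 0$ rearranges to $n_2+i_2+d-i_1+1 \ge n_1+n_2-n_3+1$, pushing this window strictly past the last admissible column.

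The parity condition $i_1+i_2 \equiv n_2+n_3 \pmod 2$ is not used in any of the computations; its role is only to certify that the resulting zero $a = i_1-i_2-n_2-n_3$ is an even integer (since $i_1-i_2-n_2-n_3 \equiv i_1+i_2+n_2+n_3 \pmod 2$), so that Lemma \ref{second} feeds the identification-of-factors argument with the even linear factors of $P_{n_1,n_2,n_3}(a)$ complementing the odd ones produced by Lemma \ref{first}. The main obstacle is the column-range verification in the right block: one must translate the inequality $-i_1+i_2-n_1+n_3+d \ge 0$ into the statement that the nonzero support of the relevant binomial coefficient lies beyond the rightmost admissible column, rather than trying to match signs as in Lemma \ref{first}.
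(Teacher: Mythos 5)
Your proposal is correct and follows essentially the same route as the paper: the left block is handled exactly as in Lemma \ref{first}, and the two extra hypotheses $n_1+1\le i_1$ and $-i_1+i_2-n_1+n_3+d\ge 0$ are used precisely to force the $d$-th differences in the right block to vanish (negative lower index in the top-right, support window beyond column $n_1+n_2-n_3$ in the bottom-right), which is what the paper's terse proof asserts. Your observation that the parity condition is only needed to make $a=i_1-i_2-n_2-n_3$ an even zero (and plays no role in the row identity) is likewise consistent with the paper.
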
 

\begin{proof} The proof follows the proof of Lemma~\ref{first}\,up to some point. Note that the assumptions $ n_1+1 \le i_1,-i_1+i_2-n_1+n_3+d \ge 0$ are chosen so that the relevant differences of the entries in the right block are zero.
\end{proof}

For $d=0$, we obtain the following factors.
\begin{multline*}
\prod_{i= \left \lceil (n_1+n_2)/2 \right \rceil}^{\min \left( \left \lfloor (n_2+n_3-n_1-1)/2 \right \rfloor, \left \lfloor (n_1+n_3)/2 \right \rfloor \right)} (a+2i)^{2i-n_3} \prod_{i= \max \left( \left \lceil (n_1+n_2)/2 \right \rceil, \left \lceil (n_2+n_3-n_1)/2 \right \rceil \right)}^{\left \lfloor (n_1+n_3)/2 \right \rfloor} (a+2i)^{n_2-n_1} \\ \times 
\prod_{i= \max \left( \left \lceil (n_2+n_3-n_1)/2 \right \rceil, \left \lceil (n_1+n_3+1)/2 \right \rceil \right)}^{\left \lfloor (n_2+n_3)/2 \right \rfloor} (a+2i)^{n_2+n_3-2i} \prod_{i=\left \lceil (n_1+n_3+1)/2 \right \rceil}^{ \left \lfloor (n_2+n_3-n_1-1)/2 \right \rfloor} (a+2i)^{n_1}
\end{multline*}
The degree of this product is
\begin{multline*}
\left(\max \left(\left\lceil \frac{n_1+n_3+1}{2}
  \right\rceil
   ,\left\lceil \frac{-n_1+n_2+n_3}{2}
   \right\rceil \right)- \left\lfloor \frac{n_2+n_3+2}{2}
   \right\rfloor \right) \\ \times \left(\max
   \left(\left\lceil \frac{n_1+n_3+1}{2}
  \right\rceil
   ,\left\lceil \frac{-n_1+n_2+n_3}{2}
   \right\rceil \right)- \left\lceil
   \frac{n_2+n_3}{2}\right\rceil \right) \\ +\left(\min
   \left(\left\lfloor
   \frac{n_1+n_3}{2}\right\rfloor
   ,\left\lfloor \frac{-n_1+n_2+n_3-1}{2}
   \right\rfloor \right)-\min \left(\left\lceil
   \frac{n_1+n_2-2}{2}
  \right\rceil
   ,\left\lfloor \frac{-n_1+n_2+n_3-1}{2}
  \right\rfloor \right)\right) \\ \times  \left(\min
   \left(\left\lceil
   \frac{n_1+n_2}{2}\right\rceil
   ,\left\lfloor \frac{-n_1+n_2+n_3+1}{2}
   \right\rfloor \right)-\max \left(\left\lceil
   \frac{n_3-n_1}{2}\right\rceil
   ,\left\lceil \frac{n_1-n_2+n_3+1}{2}
   \right\rceil \right)\right) \\ +(n_2-n_1)
   \left(\max \left(\left\lceil \frac{-n_1+n_2+n_3}{2}
   \right\rceil ,\left\lfloor \frac{n_1+n_3+2}{2}
   \right\rfloor
   \right)-\max \left(\left\lceil
   \frac{n_1+n_2}{2}\right\rceil
   ,\left\lceil \frac{-n_1+n_2+n_3}{2}
   \right\rceil \right)\right) \\ +n_1
   \left(\left\lfloor \frac{-n_1+n_2+n_3+1}{2}
   \right\rfloor -\min \left(\left\lceil \frac{n_1+n_3+1}{2}
   \right\rceil
   ,\left\lfloor \frac{-n_1+n_2+n_3+1}{2}
   \right\rfloor \right)\right).
\end{multline*}
For $d > 0$, we obtain 
$$
\prod_{i= \left \lceil (n_1+n_2+n_3)/4  \right \rceil}^{\min\left( \left \lfloor (n_1+n_2-1)/2 \right \rfloor, \left \lfloor (n_2+n_3-n_1-1)/2 \right \rfloor  \right)} (a+2i)^{4i-n_1-n_2-n_3}
\prod_{i= \max \left( \left \lceil (n_2+n_3-n_1)/2 \right \rceil, n_1 \right)}^{\left \lfloor (n_1+n_2-1)/2 \right \rfloor} (a+2i)^{2i-2 n_1}. 
$$
The degree is 
\begin{multline*}
\left(\max \left(\left\lceil \frac{-n_1+n_2+n_3-2}{2}
   \right\rceil ,\left\lfloor \frac{n_1+n_2-1}{2}
  \right\rfloor
   \right)+\max \left(-n_1,\left\lceil
   \frac{-5n_1+n_2+n_3}{2} \right\rceil
   \right)\right) \\ \times \left(\max
   \left(\left\lceil \frac{-n_1+n_2+n_3}{2}
   \right\rceil ,\left\lfloor \frac{n_1+n_2+1}{2}
  \right\rfloor
   \right)-\max \left(n_1,\left\lceil
   \frac{-n_1+n_2+n_3}{2}
  \right\rceil \right)\right) \\ +\left(\min
   \left(\left\lfloor \frac{n_1+n_2+1}{2}
  \right\rfloor
   ,\left\lfloor \frac{-n_1+n_2+n_3+1}{2}
  \right\rfloor \right)-\min \left(\left\lceil
   \frac{n_1+n_2+n_3}{4}
   \right\rceil 
   ,\left\lfloor \frac{-n_1+n_2+n_3+1}{2}
  \right\rfloor \right)\right) \\ \times  \left(2 \min
   \left(\left\lceil \frac{n_1+n_2+n_3}{4}
   \right\rceil 
   ,\left\lfloor \frac{-n_1+n_2+n_3+1}{2}
   \right\rfloor \right)+2 \min \left(\left\lfloor
   \frac{n_1+n_2-1}{2}
  \right\rfloor
   ,\left\lfloor \frac{-n_1+n_2+n_3-1}{2}
   \right\rfloor
   \right) \right. \\ \left.  -n_1-n_2-n_3\right).
\end{multline*}

A related set of even zeros can be obtained as follows. From the proof of 
Lemma~\ref{first}, we know that the $i_1$-th row in the top left block is equal to the $i_2$-th row in the bottom left block, assuming  that $a=i_1-i_2-n_2-n_3$ and $i_1+i_2 \equiv n_2+n_3 \, (\mod 2)$. For each such pair, we can produce a zero row in the left, say, bottom block. Those pairs $(i_1,i_2)$ which were already dealt with in Lemma~\ref{second} (having also zero entries in the right block) are excluded in the following. The dimension of the kernel of the submatrix consisting of the remaining rows is a lower bound for the ``additional'' multiplicity of the zero. If $m$ is the number of these rows, then $m-(n_1+n_2-n_3)$ is a lower bound for this dimension and thus for the additional multiplicity because $n_1+n_2-n_3$ is the number of columns in the right block. It is straightforward to check that this results in the following factors.
$$
\prod_{i=n_2}^{\left \lfloor (n_2+n_3-1)/2 \right \rfloor} (a+2i)^{2i-2 n_2} 
\prod_{i=\left \lceil (n_2+n_3)/2 \right \rfloor}^{n_3} (a+2i)^{2 n_3 - 2 i}
$$
The degree of this product is
$$
\left\lceil \frac{n_2-n_3-2}{2} \right\rceil 
   \left\lceil
   \frac{n_2-n_3}{2}\right\rceil
   +\left\lfloor \frac{-n_2+n_3-1}{2}
  \right\rfloor 
   \left\lfloor \frac{-n_2+n_3+1}{2}
   \right\rfloor.
$$
As for the remaining even zeros, we need the following lemma. It is useful to define the following operator.
$$\sigma_x p(x) = p(x+1)+p(x)$$
We refer to it as the anti-difference.
\begin{lem} 
\label{third}
Let $d \ge 0$. Assuming $a=j_1-j_2-n_3$, $j_1+j_2+n_3 \equiv 0 \, (\mod 2)$, $j_1 \ge n_2-n_1+1$ and $n_2-n_1+j_1+d \ge j_2$, the $j_1$-th column of the $d$-th anti-difference with respect to the columns in the left block is equal to the $j_2$-th column of the $d$-th anti-difference in the right block, provided that $1 \le j_1 \le n_3-d$ and $1 \le j_2 \le n_1+n_2-n_3-d$.
\end{lem}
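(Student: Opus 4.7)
My plan is to prove this lemma in close analogy with the proofs of Lemmas~\ref{first} and \ref{second}, but replacing the forward difference operator $\fd$ throughout by its dual, the anti-difference $\sigma$. The engine will be two telescoping identities, both immediate from Pascal's rule: for the left block and the bottom-right block, where $j$ appears only in the lower parameter of the binomial coefficient,
$$
\sigma_j^d \binom{m}{j+c}=\binom{m+d}{j+c+d};
$$
and for the top-right block, where $j$ sits in the upper parameter with the alternating sign $(-1)^{j+n_3-1}$,
$$
\sigma_j^d\bigl[(-1)^j \binom{m+j}{k}\bigr]=(-1)^{j+d}\binom{m+j}{k-d}.
$$
Both follow by a one-line induction on $d$, using $\binom{n+1}{r}=\binom{n}{r}+\binom{n}{r-1}$ in the first case and $\binom{n+1}{r}-\binom{n}{r}=\binom{n}{r-1}$ in the second.

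Having these closed forms, I would then substitute $a=j_1-j_2-n_3$. The top entry of the $j_1$-th column of the anti-differenced left block reads $\binom{i-n_1-1+d}{j_1-1+d}$, while that of the $j_2$-th column of the anti-differenced right block reads $(-1)^{j_2+n_3-1+d}\binom{n_1+j_1-i-1}{n_1-i-d}$; the corresponding bottom entries are $\binom{-n_1+n_2+j_1-j_2+i-1+d}{j_1-1+d}$ and $\binom{-n_1+n_2+j_1-j_2+i-1+d}{-n_1+n_2+i-j_2}$. Applying \eqref{elementary2} to the top-left expression turns it into $(-1)^{j_1-1+d}\binom{n_1+j_1-i-1}{j_1-1+d}$, and a subsequent use of the symmetry \eqref{sym} rewrites this as $(-1)^{j_1-1+d}\binom{n_1+j_1-i-1}{n_1-i-d}$; the hypothesis $j_1\ge n_2-n_1+1$ is exactly what certifies $n_1+j_1-i-1\ge 0$ in the worst case $i=n_2$, which is the precondition for \eqref{sym}. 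The two signs $(-1)^{j_1-1+d}$ and $(-1)^{j_2+n_3-1+d}$ then agree precisely when $j_1+j_2+n_3\equiv 0\pmod 2$, the parity hypothesis. For the bottom entries, a single application of \eqref{sym} identifies them (the two lower parameters sum to the common upper parameter), and the remaining hypothesis $n_2-n_1+j_1+d\ge j_2$ is exactly what ensures non-negativity of the upper parameter at the extremal row $i=1$.

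The argument is thus entirely mechanical once the anti-differences are written down in closed form; the only real content is the observation that each of the four hypotheses in the statement is pinned down by one specific binomial-identity precondition or sign-matching requirement. I expect no computational obstacle, only careful bookkeeping of cases, and the proof should be essentially a mirror image of that of Lemma~\ref{first}.
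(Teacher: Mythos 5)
Your proof is correct and follows essentially the same route as the paper's: compute the closed forms of the $d$-th column anti-differences block by block, substitute $a=j_1-j_2-n_3$, and match the resulting entries via \eqref{elementary2} and the symmetry \eqref{sym}, with the hypotheses $j_1\ge n_2-n_1+1$, $n_2-n_1+j_1+d\ge j_2$ and the parity condition arising exactly where you say they do. One cosmetic remark: in the bottom-right block $j$ enters the lower parameter with a minus sign, so the governing identity there is $\sigma_j^d\binom{m}{c-j}=\binom{m+d}{c-j}$ (lower parameter unchanged) rather than your displayed $\sigma_j^d\binom{m}{j+c}=\binom{m+d}{j+c+d}$; since the entry you actually write down and use is the correct one, this does not affect the argument.
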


\begin{proof} The $d$-th anti-difference in the bottom left block is 
$$
\sigma_j^d \binom{-n_1+n_2+n_3+a+i-1}{j-1} = \binom{-n_1+n_2+n_3+a+i-1+d}{j-1+d},
$$
while it is 
$$
\binom{-n_1+n_2+n_3+a+i-1+d}{-n_1+n_2+i-j}
$$
in the right bottom block. If we plug in $a=j_1-j_2-n_3$, then we need to employ the symmetry to show that the expressions are equal, which is possible as long as  $n_2-n_1+j_1+d \ge j_2$.
On the other hand, the $d$-th anti-difference in the top left block is 
$$
\sigma_j^d \binom{i-n_1-1}{j-1} = \binom{i-n_1-1+d}{j-1+d},
$$
while the $d$-th anti-difference in the top right block is 
$$
\sigma_j^d (-1)^{j+n_3-1} \binom{n_1+n_3+a-i+j-1}{n_1-i} = 
(-1)^{j+n_3+d-1} \binom{n_1+n_3+a-i+j-1}{n_1-i-d}.
$$
After plugging in $a=j_1-j_2-n_3$ and applying \eqref{elementary2} to the entry in the top left block, we see that we need to have 
$$
(-1)^{j_1+1+d} \binom{n_1+j_1-i-1}{j_1-1+d} = (-1)^{j_2+n_3+d-1} 
\binom{n_1+j_1-i-1}{n_1-i-d}.
$$
By the symmetry, this is true if $j_1 \ge n_2-n_1+1$ and $j_1+j_2+n_3 \equiv 0 \, (\mod 2)$.
\end{proof}
From this lemma, we can deduce the following linear factors if $d=0$.
\begin{multline*}
\prod_{j_1=\max(1,n_2-n_1+1)}^{n_3} \prod_{j_2=1 \atop j_1+j_2+n_3 \equiv 0 \, (2)}^{\min(n_2-n_1+j_1,n_1+n_2-n_3)} (a-j_1+j_2+n_3) \\
= \prod_{i=1}^{\lfloor (n_1+n_2-n_3)/2 \rfloor} (a+2i)^{2i}  
\prod_{i=\lfloor (n_1+n_2-n_3)/2 \rfloor+1}^{\lfloor (n_1+n_3-n_2)/2 \rfloor}
(a+2i)^{n_1+n_2-n_3}  \prod_{i=\lfloor (n_1+n_3-n_2)/2 \rfloor+1}^{\min(\lfloor (n_2+n_3-n_1)/2 \rfloor, n_1)} (a+2i)^{2 n_1 - 2 i} 
\end{multline*}
The degree of this product is
\begin{multline*}
\left(\min \left(0,\left\lfloor \frac{-3
   n_1+n_2+n_3}{2}
   \right\rfloor \right)+\left\lfloor \frac{-n_1-n_2+n_3+2}{2}
  \right\rfloor \right)  \\ \times \left(\left\lfloor
   \frac{n_1-n_2+n_3}{2}
   \right\rfloor -\min \left(n_1,\left\lfloor
   \frac{-n_1+n_2+n_3}{2}
   \right\rfloor \right)\right) \\ +\left\lfloor
   \frac{n_1+n_2-n_3}{2}
   \right\rfloor  \left\lfloor \frac{n_1+n_2-n_3+2}{2}
  \right\rfloor  \\ +(n_1+n_2-n_3)
   \left(\left\lfloor \frac{n_1-n_2+n_3}{2}
   \right\rfloor -\left\lfloor \frac{n_1+n_2-n_3}{2}
   \right\rfloor \right).
\end{multline*}
For $d > 0$, we obtain 
$$
\prod_{i=\lfloor (n_2+n_3-n_1)/2 \rfloor +1}^{\lfloor (n_1+n_2+n_3)/4 \rfloor} (a+2i)^{n_1+n_2+n_3-4 i}.
$$
The degree of this product is 
\begin{multline*}
\left(\left\lfloor \frac{-n_1+n_2+n_3}{2}
   \right\rfloor -\max \left(\left\lfloor \frac{-n_1+n_2+n_3}{2}
   \right\rfloor ,\left\lfloor \frac{n_1+n_2+n_3}{4}
   \right\rfloor \right)\right) \\ \times \left(2 \max
   \left(\left\lfloor \frac{-n_1+n_2+n_3}{2}
  \right\rfloor ,\left\lfloor \frac{n_1+n_2+n_3}{4}
   \right\rfloor \right)+2 \left\lfloor \frac{-n_1+n_2+n_3+2}{2}
   \right\rfloor
   -n_1-n_2-n_3 \right).
\end{multline*}
The even zeros coming from Lemma~\ref{third} are distinct from those that were identified before: It can be checked that the former factors are of the form $(a+2i)$ with $i \le \min \left(\left\lfloor \frac{n_1+n_2+n_3}{4} \right\rfloor,n_1 \right)$, while the latter are of the form $(a+2i)$ with $i > \min \left(\left\lfloor \frac{n_1+n_2+n_3}{4} \right\rfloor,n_1 \right)$. Now it remains to show that the degrees of the various factors add up to the upper bound for the total degree as computed in Lemma~\ref{degree} and to provide one additional evaluation. The former is a tedious but straightforward computation distinguishing several cases taking into the remainder of the  $n_i$'s modulo $4$ and certain linear inequalities (which can be assisted by a computer algebra system). The additional evaluation is provided by $a=0$ because then the result is equivalent to MacMahon's formula for the number of plane partitions in an $n_1 \times n_2 \times n_3$-box. It is straightforward to check that this leads to the leading coefficient in $a$ displayed in formula (5.1). This completes the proof of Theorem~\ref{newtheo}.

\medskip

\begin{rem} 
\label{remgeneralize}
As mentioned above, Theorem~\ref{newtheo} establishes a counterpart of the main result of \cite{CEKZ}, in which the removed triangle is not in the center of the hexagon, but in a new position (the two positions agree only if $n_1=n_2=n_3$). Using Theorem 1 of \cite{3b}, one can deduce from Theorem~\ref{newtheo} above a more general result --- we can allow, instead of just the triangular hole, to have a more general hole shape, consisting of a triangle with three other triangles of the opposite orientation touching its vertices (what is called a shamrock in \cite{ff}). 

\begin{theo}
\label{newshamrock} Let $SC'_{n_1,n_2,n_3}(a,b,c,m)$ be the region obtained from the hexagon of sides $n_1+a+b+c$, $n_2+m$, $n_3+a+b+c$, $n_1+m$, $n_2+a+b+c$, $n_3+m$ $($clockwise from top$)$ by removing a shamrock of core size $m$ and lobe sizes $a$, $b$ and $c$ $($counterclockwise from top; see Figure $2.1$ of \cite{fv} for an illustration of a shamrock$)$, placed in such a way that the top, left and right lobes are at distances $n_1$, $n_2$ and $n_3$  from the top, southwestern and southeastern sides of the hexagon, respectively. Then we have     
\begin{align}    
&
\frac{\M(SC'_{n_1,n_2,n_3}(a,b,c,m))}{\M(S_{n_1,n_2,n_3,a+b+c+m,0,0,0,0,0,0})}
=
\frac{\h(m)^3\h(a)\h(b)\h(c)}{\h(m+a)\h(m+b)\h(m+c)}
\frac{\h(n_1+a)\h(n_2+n_3-n_1+b+c+m)}{\h(n_1+a+m)\h(n_2+n_3-n_1+b+c)}
\nonumber
\\[10pt]
&\ \ \ \ \ \ \ \ 
\times
\frac{\h(n_2+b)\h(n_1+n_3-n_2+a+c+m)}{\h(n_2+b+m)\h(n_1+n_3-n_2+a+c)}
\frac{\h(n_3+c)\h(n_1+n_2-n_3+a+b+m)}{\h(n_3+c+m)\h(n_1+n_2-n_3+a+b)},
\label{newshamrockf}
\end{align}
where $\h(n)=0!\,1!\cdots(n-1)!$, and the denominator is given by Theorem $\ref{newtheo}$. 
\end{theo}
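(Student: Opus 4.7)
The plan is to reduce Theorem~\ref{newshamrock} to Theorem~\ref{newtheo} by invoking Theorem~1 of \cite{3b}, which expresses the number of lozenge tilings of a hexagonal region containing a shamrock hole (of core size $m$ and lobe sizes $a,b,c$) as the number of tilings of the same hexagon with the shamrock replaced by its enclosing up-pointing triangle of side $a+b+c+m$, multiplied by an explicit product formula. The factor in that formula splits as an intrinsic ``shape'' contribution of the shamrock, together with three cyclically symmetric lobe contributions, each depending only on the size of the corresponding lobe together with the distances from that lobe to the side of the hexagon it faces and to the opposite wedge.

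First I would verify the geometric matching. The enclosing up-pointing triangle of $S(a,b,c,m)$ has side length $a+b+c+m$, and replacing the shamrock in $SC'_{n_1,n_2,n_3}(a,b,c,m)$ by this triangle is designed to produce precisely the region $S_{n_1,n_2,n_3,a+b+c+m,0,0,0,0,0,0}$: the chosen hexagon side-lengths $n_1+a+b+c$, $n_2+m$, $n_3+a+b+c$, $n_1+m$, $n_2+a+b+c$, $n_3+m$ (clockwise from top) are exactly those that preserve the distances $n_1, n_2, n_3$ from the top, southwestern and southeastern sides to the corresponding sides of the enclosing triangle. This is where the specific (and seemingly asymmetric) choice of hexagon dimensions in the statement of Theorem~\ref{newshamrock} becomes natural. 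Next I would read off the multiplicative factor from Theorem~1 of \cite{3b} under this geometric setup: the top lobe of size $a$ sits at lattice distance $n_1$ from the top side, and its distance to the opposite wedge equals $n_2+n_3-n_1+b+c+m$, so it contributes $\h(n_1+a)\h(n_2+n_3-n_1+b+c+m)/[\h(n_1+a+m)\h(n_2+n_3-n_1+b+c)]$; the left and right lobes contribute the cyclically corresponding fractions; and the intrinsic shape factor is $\h(m)^3\h(a)\h(b)\h(c)/[\h(m+a)\h(m+b)\h(m+c)]$. Their product is the right-hand side of \eqref{newshamrockf}.

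The main obstacle is purely bookkeeping in the identification step: one must carefully match the distance parameters appearing in Theorem~1 of \cite{3b} to the six side-lengths of our hexagon and keep track of which lobe faces which side, in order to see that the three cyclic lobe contributions assemble into precisely the product displayed in \eqref{newshamrockf}. Once this identification is verified, substituting the closed form of Theorem~\ref{newtheo} for $\M(S_{n_1,n_2,n_3,a+b+c+m,0,0,0,0,0,0})$ into the denominator on the left-hand side yields the desired product formula for $\M(SC'_{n_1,n_2,n_3}(a,b,c,m))$, completing the proof.
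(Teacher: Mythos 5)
Your overall plan --- pass from the shamrock hole to a triangular hole via the product formula of Theorem~1 of \cite{3b}, then quote Theorem~\ref{newtheo} for the denominator --- is indeed the (one-line) derivation the paper indicates in Remark~\ref{remgeneralize}, so the strategy is the intended one. The problem is the geometric identification on which your reduction rests. You assert that Theorem~1 of \cite{3b} compares the hexagon containing the shamrock with \emph{the same hexagon} in which the shamrock is replaced by its enclosing up-pointing triangle of side $a+b+c+m$, and that this replacement produces precisely $S_{n_1,n_2,n_3,a+b+c+m,0,0,0,0,0,0}$. That cannot be right: removing the shamrock deletes $m-a-b-c$ more up-pointing than down-pointing unit triangles, whereas removing an up-pointing triangle of side $a+b+c+m$ deletes $a+b+c+m$ more; the hexagon with sides $n_1+a+b+c$, $n_2+m$, $n_3+a+b+c$, $n_1+m$, $n_2+a+b+c$, $n_3+m$ is balanced against the former, not the latter, so the region you describe is off-balance by $2(a+b+c)$ unit triangles and has no lozenge tilings at all whenever $a+b+c>0$. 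In fact, by the definition in Section~\ref{detsec}, the denominator region $S_{n_1,n_2,n_3,a+b+c+m,0,\dots,0}$ lives in a \emph{different} hexagon, with sides $n_1+a+b+c+m$, $n_3$, $n_2+a+b+c+m$, $n_1$, $n_3+a+b+c+m$, $n_2$ (clockwise from the northwestern side): passing from the shamrock to the triangle necessarily redistributes $m$ and $a+b+c$ over the six sides, and tracking this change of hexagon, together with how the distance parameters $n_1,n_2,n_3$ are re-read in the new hexagon, is exactly the bookkeeping your proposal treats as already settled.

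Relatedly, the statement you attribute to Theorem~1 of \cite{3b} is not the one that is available. As it is used in this paper (see Theorem~\ref{triadform} and the discussion preceding it), that theorem concerns a hexagon with a \emph{triad of bowties} and compares it with a hexagon of \emph{modified side lengths} in which the bowties are replaced by triangles; note that $T_{n,k,B,a,b,c}$ and $S_{n,0,B,k}$ do not sit in the same hexagon either. To reach \eqref{newshamrockf} one must still explain how a single shamrock at a general position $(n_1,n_2,n_3)$ is obtained from a suitable specialization or degeneration of that triad setting, and how the enclosing hexagon transforms along the way so that the comparison region becomes exactly $S_{n_1,n_2,n_3,a+b+c+m,0,\dots,0}$ of Theorem~\ref{newtheo}. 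This step is absent from your write-up, and since the form in which you invoke the input theorem is false as stated, the proposed proof does not go through as written.
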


For the special case of a bowtie this was also conjectured by WonHyok Kim. This will be presented in his master thesis prepared under the supervision of the second author, along with a proof that reduces everything to proving some hypergeometric identities.
\end{rem}

\parindent15pt

\section{A determinantal formula of dimension $a+b_1+b_2+b_3$ for even $b_i$}
\label{withoutnsec}

The purpose of this section is to employ an idea that has been used in Section~\ref{detsec} to derive a determinantal formula for $\m(S_{n_1,n_2,n_3,a,b_1,b_2,b_3,k_1,k_2,k_3})$ when $b_1,b_2,b_3$ are even, such that the underlying matrix is of size $a+b_1+b_2+b_3$. This allows us to reduce the proof of our formula for $\m(S_{n,n,n,a,b,b,b,k,k,k})$ for any concrete even values of $a$ and $b$ to verifying certain hypergeometric identities. This shows, from a different point of view, the advantage of our new method.

\begin{theo} If $a$, $b_1$, $b_2$ and $b_3$ are even, then
\begin{multline}
\label{reduced}
\!\!\!\!\!
  \m(S_{n_1,n_2,n_3,a,b_1,b_2,b_3,k_1,k_2,k_3})=     
\prod_{i=1}^{n_1} \binom{n_2+n_3+a+b_1+b_2+b_3+i-1}{n_2} 
\prod_{j=1}^{a+b_1+b_2+b_3+n_1} \binom{j+n_2-1}{j-1}^{-1}  \\[10pt]
\ \ \ \ \ \ \ \ \ \ \ \ \ \ \ \ \ \ \ \ 
\times
\left|\det \left( \begin{matrix*}[l]   \sum\limits_{l \ge 1} \binom{n_1+\frac{a}{2}+b_1+k_3+i-1}{l+n_2-n_3+2 k_3-1}  \binom{l+n_2-1}{l-1} \binom{-n_3-a-b_1-b_2-b_3}{j+n_1-l}&  \quad {1 \le i \le b_3} \\ \sum\limits_{l \ge 1} \binom{n_1+b_1+i-1}{l+n_2-n_3-b_3-1} \binom{l+n_2-1}{l-1} \binom{-n_3-a-b_1-b_2-b_3}{j+n_1-l} & \quad {1 \le i \le a} \\ \sum\limits_{l \ge 1} \binom{n_1 -2 k_1+i-1}{l+n_2-n_3-\frac{a}{2}-b_3-k_1-1} \binom{l+n_2-1}{l-1} \binom{-n_3-a-b_1-b_2-b_3}{j+n_1-l} & \quad {1 \le i \le b_1} \\ \sum\limits_{l \ge 1} \binom{n_1+\frac{a}{2}+b_1+k_2+i-1}{l+n_2-n_3-\frac{a}{2}-b_3-k_2-1} \binom{l+n_2-1}{l-1} \binom{-n_3-a-b_1-b_2-b_3}{j+n_1-l} & \quad {1 \le i \le b_2} \end{matrix*} \right)_{1 \le j \le a+b_1+b_2+b_3}\right|.
\end{multline}
\end{theo}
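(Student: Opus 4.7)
The plan is to derive \eqref{reduced} from the size-$n$ determinantal formula \eqref{bigmatrix} via two successive Laplace expansions interspersed with a right-multiplication by a unit-triangular matrix that implements the prefactor and reshapes the remaining entries. First I would specialize \eqref{bigmatrix} to $d=1$, so that the first block has entries $\binom{i-1}{j-1}$ that vanish for $j>i$. A Laplace expansion along these $n_2$ rows forces the choice of columns $\{1,\dotsc,n_2\}$; the resulting minor is lower triangular with unit diagonal. The problem reduces to computing the determinant of the $(n_1+a+b_1+b_2+b_3)\times(n_1+a+b_1+b_2+b_3)$ matrix $M$ spanned by blocks $2$--$6$ of \eqref{bigmatrix} restricted to columns $j=n_2+1,\dotsc,n$. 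Reindexing columns by $l:=j-n_2$, the block-$2$ entry becomes $\binom{n_2+m+i-1}{n_2+l-1}$, where $m:=n_3+a+b_1+b_2+b_3$, while the binomial coefficients in blocks $3$--$6$ have their lower parameters shifted by~$n_2$.

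Next I would right-multiply $M$ by the upper-triangular matrix $T$ of the same size defined by
\[
T_{l,j''}:=\binom{l+n_2-1}{l-1}\binom{-m}{j''-l}.
\]
Since $\binom{-m}{j''-l}$ vanishes for $l>j''$, $T$ is upper triangular, and its diagonal entries $\binom{j''+n_2-1}{j''-1}$ give
\[
\det T=\prod_{j''=1}^{n_1+a+b_1+b_2+b_3}\binom{j''+n_2-1}{j''-1},
\]
which is exactly the reciprocal of the second prefactor appearing in \eqref{reduced}. The key calculation is the effect of $T$ on block~$2$: the identity $\binom{a}{b}\binom{b}{c}=\binom{a}{c}\binom{a-c}{b-c}$ with $(a,b,c)=(n_2+m+i-1,\,n_2+l-1,\,n_2)$ yields $\binom{n_2+m+i-1}{n_2+l-1}\binom{n_2+l-1}{l-1}=\binom{n_2+m+i-1}{n_2}\binom{m+i-1}{l-1}$, and a single Chu--Vandermonde summation then collapses the sum over $l$ to give
\[
(MT)_{i,j''}\big|_{\text{block }2}=\binom{n_2+m+i-1}{n_2}\binom{i-1}{j''-1}.
\]

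Since $\binom{i-1}{j''-1}$ vanishes for $j''>i$, the transformed block~$2$ vanishes in all columns $j''>n_1$. A second Laplace expansion along the $n_1$ rows of block~$2$ therefore forces the choice of columns $\{1,\dotsc,n_1\}$, producing an $n_1\times n_1$ lower-triangular minor with diagonal entries $\binom{n_2+m+i-1}{n_2}$, whose determinant $\prod_{i=1}^{n_1}\binom{n_2+n_3+a+b_1+b_2+b_3+i-1}{n_2}$ matches the first prefactor of \eqref{reduced}. The complementary $(a+b_1+b_2+b_3)\times(a+b_1+b_2+b_3)$ submatrix lives in rows of blocks $3$--$6$ and columns $j''\in\{n_1+1,\dotsc,n_1+a+b_1+b_2+b_3\}$. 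Setting $j:=j''-n_1$ and substituting the shifted entries of blocks $3$--$6$ into $(MT)_{\cdot,j+n_1}=\sum_l(\text{original entry})_{\cdot,l}\,T_{l,j+n_1}$ reproduces term-by-term the four summed-over-$l$ expressions in \eqref{reduced}. Dividing $\det(MT)$ by $\det T$ then yields the asserted identity (up to sign, which is absorbed by the absolute value).

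There is no genuine obstacle once the right matrix $T$ is identified: by design, $T$ simultaneously contributes the reciprocal-prefactor via its determinant, triangularizes block~$2$ via the Chu--Vandermonde collapse, and converts blocks $3$--$6$ into the shape prescribed by \eqref{reduced}. The remaining work is pure bookkeeping on the two column shifts ($j\mapsto l=j-n_2$ after the first reduction and $l\mapsto j+n_1$ after the second) and verifying that the Laplace signs are trivial, since both row and column index sets chosen are initial segments $\{1,\dotsc,k\}$.
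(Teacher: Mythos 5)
Your proposal is correct and follows essentially the same route as the paper: specialize $d=1$ to eliminate the $n_2$-block, right-multiply by an upper-triangular matrix built from $\binom{-n_3-a-b_1-b_2-b_3}{j-l}$ so that Chu--Vandermonde collapses the $n_1$-block, and then expand that block away, with the two prefactors arising exactly as you describe. The only (cosmetic) difference is that you fold the diagonal column scaling $\binom{l+n_2-1}{l-1}$ into your matrix $T$ and recover the prefactor $\prod_j\binom{j+n_2-1}{j-1}^{-1}$ as $1/\det T$, whereas the paper first factors it out of the columns and then multiplies by the unit upper-triangular matrix.
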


\begin{proof}
The starting point is again \eqref{op}, where we now eliminate the top two blocks. By setting $d=1$ and plugging in $c_{1,i}=i$, we see that 
$\m(S_{n_1,n_2,n_3,a,b_1,b_2,b_3,k_1,k_2,k_3})$ is equal to 
\begin{equation*}
\left|\,
\prod_{i=1}^{b_3} \fd_{c_{3,i}}^{n_3-2 k_3} \prod_{i=1}^{a} \fd_{c_{4,i}}^{n_3+b_3} \prod_{i=1}^{b_1} \fd_{c_{5,i}}^{n_3+\frac{a}{2}+b_3+k_1} \prod_{i=1}^{b_2} \fd_{c_{6,i}}^{n_3+\frac{a}{2}+b_3+k_2} 
\det \left( \begin{matrix*}[l]  \binom{i-1}{j-1} & \quad {1 \le i \le n_2} \\ \binom{c_{2,i}-1}{j-1} & \quad {1 \le i \le n_1}  \\ \binom{c_{3,i}-1}{j-1} &  \quad {1 \le i \le b_3} \\ \binom{c_{4,i}-1}{j-1} & \quad {1 \le i \le a} \\ \binom{c_{5,i}-1}{j-1} & \quad {1 \le i \le b_1} \\ \binom{c_{6,i}-1}{j-1} & \quad {1 \le i \le b_2} \end{matrix*} \right)_{1 \le j \le n}
\,\right|,
\tag{6.2}
\end{equation*}
where $c_{2,i}=n_2+n_3+a+b_1+b_2+b_3+i$, $c_{3,i}=n_1+\frac{a}{2}+b_1+k_3+i$, $c_{4,i}=n_1+b_1+i$, $c_{5,i}=n_1 -2 k_1+i$, $c_{6,i}=n_1+\frac{a}{2}+b_1+k_2+i$ and $n=n_1+n_2+a+b_1+b_2+b_3$.
The only non-zero entry in the first row is in the first column, and so we expand with respect to this row. After having performed this reduction, the new first row has the same property. We can keep expanding until we have deleted the top block. We then set $c_{2,i}=n_2+n_3+a+b_1+b_2+b_3+i$ and obtain that the expression whose absolute value is taken in (6.2) equals
$$
\prod_{i=1}^{b_3} \fd_{c_{3,i}}^{n_3-2 k_3} \prod_{i=1}^{a} \fd_{c_{4,i}}^{n_3+b_3} \prod_{i=1}^{b_1} \fd_{c_{5,i}}^{n_3+\frac{a}{2}+b_3+k_1} \prod_{i=1}^{b_2} \fd_{c_{6,i}}^{n_3+\frac{a}{2}+b_3+k_2} 
\det \left( \begin{matrix*}[l]  \binom{n_2+n_3+a+b_1+b_2+b_3+i-1}{j-1} & \quad {1 \le i \le n_1}  \\ \binom{c_{3,i}-1}{j-1} &  \quad {1 \le i \le b_3} \\ \binom{c_{4,i}-1}{j-1} & \quad {1 \le i \le a} \\ \binom{c_{5,i}-1}{j-1} & \quad {1 \le i \le b_1} \\ \binom{c_{6,i}-1}{j-1} & \quad {1 \le i \le b_2} \end{matrix*} \right)_{n_2+1 \le j \le n}.
$$
Shifting $j$, this becomes
$$
\prod_{i=1}^{b_3} \fd_{c_{3,i}}^{n_3-2 k_3} \prod_{i=1}^{a} \fd_{c_{4,i}}^{n_3+b_3} \prod_{i=1}^{b_1} \fd_{c_{5,i}}^{n_3+\frac{a}{2}+b_3+k_1} \prod_{i=1}^{b_2} \fd_{c_{6,i}}^{n_3+\frac{a}{2}+b_3+k_2} 
\det \left( \begin{matrix*}[l]  \binom{n_2+n_3+a+b_1+b_2+b_3+i-1}{j+n_2-1} & \quad {1 \le i \le n_1}  \\ \binom{c_{3,i}-1}{j+n_2-1} &  \quad {1 \le i \le b_3} \\ \binom{c_{4,i}-1}{j+n_2-1} & \quad {1 \le i \le a} \\ \binom{c_{5,i}-1}{j+n_2-1} & \quad {1 \le i \le b_1} \\ \binom{c_{6,i}-1}{j+n_2-1} & \quad {1 \le i \le b_2} \end{matrix*} \right)_{1 \le j \le n-n_2}.
$$
Taking out the factor $\binom{n_2+n_3+a+b_1+b_2+b_3+i-1}{n_2}$  from row $i$, $1 \le i \le n_1$, as well as the factor $\binom{j+n_2-1}{j-1}^{-1}$ from column $j$, $1 \le j \le n-n_2$, this is equal to 
\begin{multline}
\label{factorout}
\prod_{i=1}^{n_1} \binom{n_2+n_3+a+b_1+b_2+b_3+i-1}{n_2} 
\prod_{j=1}^{n-n_2} \binom{j+n_2-1}{j-1}^{-1} \\
\times \prod_{i=1}^{b_3} \fd_{c_{3,i}}^{n_3-2 k_3} \prod_{i=1}^{a} \fd_{c_{4,i}}^{n_3+b_3} \prod_{i=1}^{b_1} \fd_{c_{5,i}}^{n_3+\frac{a}{2}+b_3+k_1} \prod_{i=1}^{b_2} \fd_{c_{6,i}}^{n_3+\frac{a}{2}+b_3+k_2} \\
\det \left( \begin{matrix*}[l]  \binom{n_3+a+b_1+b_2+b_3+i-1}{j-1}  & \quad {1 \le i \le n_1}  \\ \binom{c_{3,i}-1}{j+n_2-1}  \binom{j+n_2-1}{j-1}&  \quad {1 \le i \le b_3} \\ \binom{c_{4,i}-1}{j+n_2-1} \binom{j+n_2-1}{j-1} & \quad {1 \le i \le a} \\ \binom{c_{5,i}-1}{j+n_2-1} \binom{j+n_2-1}{j-1}  & \quad {1 \le i \le b_1} \\ \binom{c_{6,i}-1}{j+n_2-1} \binom{j+n_2-1}{j-1} & \quad {1 \le i \le b_2} \end{matrix*} \right)_{1 \le j \le n-n_2}.
\end{multline}

Now we multiply the matrix underlying the determinant in \eqref{factorout} on the right by $\left(\binom{-n_3-a-b_1-b_2-b_3}{j-i}\right)_{1 \le i,j \le n-n_2}$, which is a matrix that has determinant $1$. We obtain that expression \eqref{factorout} is equal to
\begin{multline}
\label{matrixmultiplied}
\prod_{i=1}^{n_1} \binom{n_2+n_3+a+b_1+b_2+b_3+i-1}{n_2} 
\prod_{j=1}^{n-n_2} \binom{j+n_2-1}{j-1}^{-1} \\
\times \prod_{i=1}^{b_3} \fd_{c_{3,i}}^{n_3-2 k_3} \prod_{i=1}^{a} \fd_{c_{4,i}}^{n_3+b_3} \prod_{i=1}^{b_1} \fd_{c_{5,i}}^{n_3+\frac{a}{2}+b_3+k_1} \prod_{i=1}^{b_2} \fd_{c_{6,i}}^{n_3+\frac{a}{2}+b_3+k_2} \\
\det \left( \begin{matrix*}[l]  \binom{i-1}{j-1}  & \quad {1 \le i \le n_1}  \\ \sum\limits_{l \ge 1} \binom{c_{3,i}-1}{l+n_2-1}  \binom{l+n_2-1}{l-1} \binom{-n_3-a-b_1-b_2-b_3}{j-l}&  \quad {1 \le i \le b_3} \\ \sum\limits_{l \ge 1} \binom{c_{4,i}-1}{l+n_2-1} \binom{l+n_2-1}{l-1} \binom{-n_3-a-b_1-b_2-b_3}{j-l} & \quad {1 \le i \le a} \\ \sum\limits_{l \ge 1} \binom{c_{5,i}-1}{l+n_2-1} \binom{l+n_2-1}{l-1} \binom{-n_3-a-b_1-b_2-b_3}{j-l} & \quad {1 \le i \le b_1} \\ \sum\limits_{l \ge 1} \binom{c_{6,i}-1}{l+n_2-1} \binom{l+n_2-1}{l-1} \binom{-n_3-a-b_1-b_2-b_3}{j-l} & \quad {1 \le i \le b_2} \end{matrix*} \right)_{1 \le j \le n-n_2}.
\end{multline}
We now eliminate the top block as before, apply the difference operators and specialize the $c_{t,i}$'s. Then expression \eqref{matrixmultiplied} becomes
\begin{multline*}
\prod_{i=1}^{n_1} \binom{n_2+n_3+a+b_1+b_2+b_3+i-1}{n_2} 
\prod_{j=1}^{n-n_2} \binom{j+n_2-1}{j-1}^{-1}  \\
\times \det \left( \begin{matrix*}[l]   \sum\limits_{l \ge 1} \binom{n_1+\frac{a}{2}+b_1+k_3+i-1}{l+n_2-n_3+2 k_3-1}  \binom{l+n_2-1}{l-1} \binom{-n_3-a-b_1-b_2-b_3}{j-l}&  \quad {1 \le i \le b_3} \\ \sum\limits_{l \ge 1} \binom{n_1+b_1+i-1}{l+n_2-n_3-b_3-1} \binom{l+n_2-1}{l-1} \binom{-n_3-a-b_1-b_2-b_3}{j-l} & \quad {1 \le i \le a} \\ \sum\limits_{l \ge 1} \binom{n_1 -2 k_1+i-1}{l+n_2-n_3-\frac{a}{2}-b_3-k_1-1} \binom{l+n_2-1}{l-1} \binom{-n_3-a-b_1-b_2-b_3}{j-l} & \quad {1 \le i \le b_1} \\ \sum\limits_{l \ge 1} \binom{n_1+\frac{a}{2}+b_1+k_2+i-1}{l+n_2-n_3-\frac{a}{2}-b_3-k_2-1} \binom{l+n_2-1}{l-1} \binom{-n_3-a-b_1-b_2-b_3}{j-l} & \quad {1 \le i \le b_2} \end{matrix*} \right)_{n_1+1 \le j \le n-n_2}.
\end{multline*}
Shifting $j$ we arrive at formula \eqref{reduced} in the statement. \end{proof}

\section{The leading coefficient in $a$ for even $b$}
\label{leadingsec}

\begin{lem}
\label{teb}
For $a$ and $b$ even we have
\begin{equation}
\M(S_{n,a,b,k})=\det(\overline{I}+\overline{B})\det(\xi \overline{I}+\overline{B})\det(\xi^2\overline{I}+\overline{B}),
\label{eeb}
\end{equation}
where $\xi=-\frac{1}{2}+\frac{\sqrt{3}}{2}$ is a cubic root of unity, and $\overline{I}$ and $\overline{B}$ are $(n+2b)\times(n+2b)$ matrices given by
\begin{equation}
\renewcommand{\arraystretch}{1.6}
\overline{I}=
\left[
\begin{array}{c|c}
 {I}_{n+b} & O_{n+b,b}\\
\hline
 O_{b,n+b} & O_{b,b}
\end{array}
\right]
\label{eec}
\end{equation}
$($where $I_m$ stands for the order $m$ identity matrix, and $O_{m,p}$ for the $m\times p$ zero matrix$)$ and 
\begin{equation}
\renewcommand{\arraystretch}{2.2}
\overline{B}=
\left[
\begin{array}{c|c}
\left({\displaystyle {a+i+j-2 \choose j-1}}    \right)_{1\leq i,j\leq n+b}  & \left({\displaystyle {\frac{a}{2}+k+i-1 \choose 2k+j-1}}    \right)_{1\leq i\leq n+b \atop 1 \leq j\leq b}\\
\hline
\left({\displaystyle {n+a+b+j-1 \choose j-i}}    \right)_{1\leq i\leq b \atop 1 \leq j\leq n+b}  & \left({\displaystyle {n+\frac{a}{2}+b+k \choose 2k+j-i}}    \right)_{1\leq i,j\leq b}
\end{array}
\right].
\label{eed}
\end{equation}

\end{lem}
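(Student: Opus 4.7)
The triple-product structure on the right-hand side of \eqref{eeb}, with cubic roots of unity weighting a fixed matrix $\overline{I}$, is the characteristic signature of a character decomposition for the $\mathbb{Z}/3\mathbb{Z}$ action on $S_{n,a,b,k}$ arising from its $120^\circ$ rotational symmetry. The plan is to exhibit $\M(S_{n,a,b,k})$ as the determinant of a $3(n+2b) \times 3(n+2b)$ block-circulant matrix and then apply the classical factorization identity
\begin{equation*}
\det \begin{pmatrix} A & B & C \\ C & A & B \\ B & C & A \end{pmatrix} = \prod_{\omega^3 = 1} \det(A + \omega B + \omega^2 C).
\end{equation*}

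The starting point is the LGV formula \eqref{bigmatrix}, specialized to $n_1 = n_2 = n_3 = n$, $b_1 = b_2 = b_3 = b$, $k_1 = k_2 = k_3 = k$. As it stands, \eqref{bigmatrix} breaks the $C_3$ symmetry because the underlying trapezoid completion treats two of the three corners asymmetrically. My first step is to redo the LGV setup so that the non-intersecting paths are organized into three sets, one per fundamental sector obtained by cutting $S_{n,a,b,k}$ along its three symmetry axes. Each set should consist of $n+2b$ paths: roughly, $n$ paths starting along one long side of the hexagon, plus $2b$ paths encoding the portions of one satellite boundary and one ``third'' of the core boundary visible within that sector. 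The $C_3$ action then permutes the sets cyclically, so the resulting LGV matrix is block-circulant: the diagonal blocks count paths confined to a single sector, while the off-diagonal blocks count paths that jump to the neighbouring sectors.

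Applying the block-circulant identity above immediately produces three factors $\det(A + \omega B + \omega^2 C)$. The final task is to recognize each of them as $\det(\omega \overline{I} + \overline{B})$. The four-block structure of $\overline{B}$ should mirror the division of the $n+2b$ paths in each sector into an ``extended-side'' group (of size $n+b$, contributing to the top-left and top-right blocks of $\overline{B}$) and a ``satellite--core'' group (of size $b$, contributing to the bottom-left and bottom-right blocks); the binomial-coefficient entries of $\overline{B}$ should emerge by Chu--Vandermonde summation from the combined $A$, $B$, $C$ contributions. The $\omega \overline{I}$ term, supported only on the top-left $(n+b) \times (n+b)$ block, should come from a boundary term in this convolution associated with ``trivial'' shifts of the leftmost paths across a sector cut, thus picking up a phase $\omega$.

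The main obstacle is the first step: designing the LGV setup so that the three blocks $A$, $B$, $C$ combine cleanly into $\omega \overline{I} + \overline{B}$ rather than into a more unwieldy expression. This is a delicate bookkeeping exercise involving the choice of starting and ending points for the paths, the sign conventions for paths crossing sector cuts, and the use of closed-form binomial identities to recognize the resulting path-count sums. The assumption that $a$ and $b$ are both even is used precisely to ensure that the symmetry-axis cuts pass through lattice vertices and that the forced horizontal chains of lozenges bridging the core to the satellites split symmetrically among the three sectors.
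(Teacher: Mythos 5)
Your overall strategy is in fact the one the paper relies on: the paper's proof is a two-line reduction to the beginning of Section 10 of \cite{CEKZ}, and the argument there is exactly the scheme you outline --- cut $S_{n,a,b,k}$ along the three symmetry rays, encode the pieces by non-intersecting lattice paths, and factor the resulting block-circulant determinant over the cube roots of unity. So you are reconstructing the intended proof rather than taking a different route.

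There is, however, a genuine gap, and it sits exactly where the content of the lemma lies. First, the block structure you posit is not the right one: in the working setup there are \emph{no} paths that ``jump to the neighbouring sectors.'' A tiling of $S_{n,a,b,k}$ is determined by the three sets of lozenges straddling the three cut rays together with a tiling of each sector compatible with those crossing sets, and the number of tilings of one sector with crossing set $I$ on one ray and $J$ on the next is the minor $\det\bigl(\overline{B}_{I\cup\{n+b+1,\dotsc,n+2b\}}^{J\cup\{n+b+1,\dotsc,n+2b\}}\bigr)$ of a \emph{single} one-sector path matrix $\overline{B}$, whose rows and columns are indexed by the $n+b$ positions along a ray plus the $b$ satellite segments (this is precisely how $\overline{B}$ is used in Lemma~\ref{tec} and in \eqref{eee}). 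Summing over the three crossing sets, the full count is the determinant of the $3(n+2b)\times 3(n+2b)$ block-circulant matrix with block rows $(\overline{B},\overline{I},O)$: one block per block row is the path matrix $\overline{B}$, one is the \emph{partial} identity $\overline{I}$, which merely glues the crossing sets on a cut (hence is supported on the $n+b$ ray indices and vanishes on the $b$ satellite indices, exactly as in \eqref{eec}), and the third block is zero. Your generic $A+\omega B+\omega^{2}C$ picture, with off-diagonal blocks interpreted as two-sector path counts, will not collapse to $\omega\overline{I}+\overline{B}$ without this observation; note also that a single LGV encoding of the whole region cannot be $C_3$-equivariant, since the rotation permutes the three lozenge (path) directions, which is why the decomposition at the cuts is forced. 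Second, the explicit binomial entries of the four blocks of $\overline{B}$ in \eqref{eed} --- together with the fact that $b$ even is what makes all path families carry the same sign in the Lindstr\"om--Gessel--Viennot count (this, not the geometry of the cuts, is its role) --- constitute the actual statement to be proved; deferring this identification as ``delicate bookkeeping'' leaves the lemma unestablished.
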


\begin{proof} 
The case $b=0$ was proved at the beginning of Section 10 in \cite{CEKZ}. The general case follows by precisely the same arguments.
\end{proof}

Since all entries of the matrix $\overline{B}$ are polynomials in $a$, it follows by Lemma \ref{teb} that, for any fixed $n$, $b$ and $k$, $\M(S_{n,a,b,k})$ is a polynomial in $a$. The purpose of this section is to determine the degree and the leading coefficient of this polynomial. This is accomplished in the following result.

\begin{prop}
\label{tea}
For $b$ even, the degree of $\M(S_{2n,2a,b,k})$ regarded as a polynomial in $a$ is \linebreak $3(n^2+2bk)$, and its leading coefficient is
\begin{equation}
\left\{
\frac{1}{2^{n^2-n+2k}}
\frac{1}{\left(\frac{b}{2}+n-k+\frac12\right)_k \left(\frac12\right)_{n-k}}
\left[\prod_{i=1}^{n-k-1}\frac{1}{\left(\frac12\right)_{i}}
\prod_{i=1}^{k}\frac{1}{\left(\frac12\right)_i(2i)_{b-1}\left(i+\frac{b-1}{2}\right)_{n-k}}
\right]^2
\right\}^3.
\label{eea}
\end{equation}
%
\end{prop}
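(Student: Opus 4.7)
By Lemma~\ref{teb}, we have the factorization
$$\M(S_{2n,2a,b,k}) = \prod_{\lambda\in\{1,\xi,\xi^2\}}\det(\lambda\overline{I}+\overline{B}),$$
where $\overline I$ and $\overline B$ are the $(2n+2b)\times(2n+2b)$ matrices from \eqref{eec}--\eqref{eed} under $n\mapsto 2n$, $a\mapsto 2a$. The plan is to analyze the leading-in-$a$ behavior of each factor $\det(\lambda\overline I+\overline B)$ separately and then combine over $\lambda$.

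We first preprocess. The top-left block of $\overline B$ has entries $\binom{2a+r+c-2}{c-1}$ whose leading-in-$a$ coefficients depend only on the column index $c$, making the leading-order matrix rank-deficient. Applying successive row differences (bottom-up, $r-1$ times on row $r$) to the top $2n+b$ rows transforms the top-left block into the upper-triangular matrix $U_{rc}=\binom{2a+c-1}{c-r}$ and converts the identity block of $\overline I$ into the lower-triangular matrix $L_{rc}=(-1)^{r-c}\binom{r-1}{c-1}$ (for $c\le r$). By a parallel computation using forward differences, the top-right block becomes $\mathrm{TR}'_{rc}=\binom{a+k}{2k+c-r}$ (which vanishes for $r>2k+c$), while the two bottom blocks $\mathrm{BL}$ and $\mathrm{BR}$ are unchanged. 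Since these row operations have determinant one, $\det(\lambda\overline I+\overline B)$ is preserved.

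Next, a Leibniz expansion combined with a combinatorial matching argument identifies the dominant permutations contributing to the leading-in-$a$ coefficient. The degrees of entries in the transformed matrix are $(c-r)_+$ in the left columns and $(2k+c-r)_+$ in the right columns, with appropriate local indices in the bottom blocks. A naive matching argument gives an upper bound on the degree that exceeds $n^2+2bk$ when $b>0$; however, structured cancellations among the permutations attaining this naive maximum reduce the effective degree to exactly $n^2+2bk$. The $\lambda$-dependence of the leading coefficient arises only from the below-diagonal entries of $\lambda L$ used in the dominant permutations (each contributing a factor of $\lambda$): every diagonal entry $\lambda+1$ of $\lambda L+U$ has degree $0$ in $a$, while every above-diagonal entry of $U$ has positive degree, so dominant permutations avoid diagonal entries (this is always feasible since $2n+b$ is even when $b$ is even, allowing fixed-point-free matchings on the top block). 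The leading coefficient thus takes the form $\lambda^m C$, where $m$ is the common number of descending pairs in the dominant permutations and $C$ is a $\lambda$-independent constant, giving $\prod_\lambda \lambda^m C = \xi^{3m}C^3 = C^3$, which matches the cube structure of \eqref{eea}.

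The main obstacle will be identifying and verifying the cancellations that reduce the naive degree bound down to $n^2+2bk$, together with the explicit evaluation of the $\lambda$-independent constant $C$. The cancellation analysis will likely proceed via recursive application of the same rank-deficiency phenomenon observed initially in the top-left block (now reappearing in the residual blocks after reduction), or via a Lindstr\"om--Gessel--Viennot argument on the surviving permutation structure; the final evaluation of $C$ then requires matching the resulting leading-order product to the specific Pochhammer-style expression in \eqref{eea}, which should follow from standard hypergeometric manipulations once the leading-order sub-determinant is in closed form.
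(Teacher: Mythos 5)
There is a genuine gap: the two claims that constitute the proposition --- that the degree of each factor $\det(\lambda\overline I+\overline B)$ in $a$ is exactly $n^2+2bk$, and that the $\lambda$-independent constant $C$ equals the bracketed expression in \eqref{eea} --- are precisely the steps you defer (``the main obstacle will be identifying and verifying the cancellations\dots'', ``should follow from standard hypergeometric manipulations''). Your row-reduction and Leibniz set-up only yields a naive bound that, as you admit, overshoots $n^2+2bk$, and no mechanism for the required cancellations is exhibited; likewise no closed form for $C$ is produced. In addition, the assertion that all dominant permutations avoid the degree-zero diagonal entries $\lambda+1$ and use a \emph{common} number $m$ of sub-diagonal entries of $\lambda L$ (so that the $\lambda$-dependence is a clean $\lambda^m$) is unsupported; if different dominant permutations carried different powers of $\lambda$, the leading coefficients of the three factors would not be related by a simple root-of-unity factor and the cube structure would not follow.

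The paper circumvents both difficulties by a different mechanism that you should note. First, instead of row-reducing and fighting cancellations, it expands $\det(\lambda\overline I+\overline B)$ by multilinearity into $\sum_{I\subset[n+b]}\lambda^{|I|}\det\bigl(\overline B_{I\cup\{n+b+1,\dots,n+2b\}}^{I\cup\{n+b+1,\dots,n+2b\}}\bigr)$ and proves a \emph{per-minor} degree bound (Lemma \ref{tec}) by interpreting each principal minor as a lozenge-tiling count of a region $R_I$, Laplace-expanding the corresponding Gessel--Viennot determinant, and bounding degrees via Lemmas \ref{ted} and \ref{tee}; the maximum of these bounds is exactly $2bk+(n/2)^2$ (before the substitution $n\mapsto 2n$) and is attained at a \emph{unique} subset $I_0$, so no cancellation analysis is needed and the $\lambda$-dependence of the leading term is the single factor $\lambda^{|I_0|}$, whence $1\cdot\xi^{|I_0|}\cdot\xi^{2|I_0|}=1$. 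Second --- and this is the idea your plan misses entirely --- the exactness of the degree and the value of the leading coefficient are not computed from the determinant at all: for $a,b$ even one has $\M_r(S_{n,a,b,k})=\det(\overline I+\overline B)$, and the already-proved product formula of Theorem \ref{tbb} (Lai--Rohatgi, rewritten) shows this polynomial has degree exactly $2bk+(n/2)^2$ and hands over its leading coefficient for free. Without importing that known formula (or doing an honest closed-form evaluation of your leading-order sub-determinant), your argument cannot be completed as written.
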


\begin{proof} The first factor on the right hand side of \eqref{eeb} can be written\footnote{ As is customary, we write $[n]$ for the set $\{1,2,\dotsc,n\}$, and $A_I^J$ for the submatrix of $A$ obtained by selecting the rows with indices in $I$ and the columns with indices in $J$.} as
\begin{equation}
\det(\overline{I}+\overline{B})=\sum_{I\subset[n+b]} \det\left(\overline{B}_{I\cup\{n+b+1,\dotsc,n+2b\}}^{I\cup\{n+b+1,\dotsc,n+2b\}}\right)
\label{eee}
\end{equation}
(in other words, $\det(\overline{I}+\overline{B})$ is equal to the sum of the determinants of all principal minors of $\overline{B}$ which contain the last $b$ rows and columns). To see this, regard each of the first $n+b$ columns of $\overline{I}+\overline{B}$ as being the sum of the corresponding column of  $\overline{I}$ with the corresponding column of  $\overline{B}$, and use the fact that the determinant is a linear function. 

By the same argument, we also have
\begin{equation}
\det(\xi\overline{I}+\overline{B})=\sum_{I\subset[n+b]} \xi^{|I|}\det\left(\overline{B}_{I\cup\{n+b+1,\dotsc,n+2b\}}^{I\cup\{n+b+1,\dotsc,n+2b\}}\right)
\label{eef}
\end{equation}
and
\begin{equation}
\det(\xi^2\overline{I}+\overline{B})=\sum_{I\subset[n+b]} \xi^{2|I|}\det\left(\overline{B}_{I\cup\{n+b+1,\dotsc,n+2b\}}^{I\cup\{n+b+1,\dotsc,n+2b\}}\right).
\label{eeg}
\end{equation}

Since all entries of $\overline{B}$ are polynomials in $a$, it follows that each summand in \eqref{eee} is also so. By Lemma \ref{tec}, for the summand corresponding to the index set $I=\{i_1,\dotsc,i_s\}\subset[n+b]$, the degree in $a$ satisfies
\begin{equation}
\deg_a\det\left(\overline{B}_{I\cup\{n+b+1,\dotsc,n+2b\}}^{I\cup\{n+b+1,\dotsc,n+2b\}}\right)\leq(i_1-1)+(i_2-2)+\cdots+(i_s-s)+2bk-bs.
\label{eeh}
\end{equation}

For fixed $s$, the bound on the right hand side of \eqref{eeh} attains its maximum only for $\{i_1,\dotsc,i_s\}=\{n+b-s+1,\dotsc,n+b\}$, when it is readily seen to equal $2bk+s(n-s)$. In turn, for even $n$ (note that the $n$-parameter in the statement of Proposition \ref{tea} is even), this is maximum only for $s=n/2$. Therefore, for all subsets $\{i_1,\dotsc,i_s\}\subset[n+b]$, we have
\begin{equation}
(i_1-1)+(i_2-2)+\cdots+(i_s-s)+2bk-bs\leq 2bk+\left(\frac{n}{2}\right)^2,
\label{eei}
\end{equation}
with equality only if $s=n/2$ and $\{i_1,\dotsc,i_{n/2}\}=\{n/2+b+1,\dotsc,n+b\}$.
Therefore, by equation \eqref{eee}, it follows that $\det(\overline{I}+\overline{B})$ has degree at most $2bk+\left(\frac{n}{2}\right)^2$.

However, by the arguments in the proof of Lemma \ref{teb}, for even $a$ and $b$ we have that 
\begin{equation}
\M_r(S_{n,a,b,k})=\det(\overline{I}+\overline{B}).
\label{eej}
\end{equation}
Thus, as a simple calculation shows, it follows from Theorem \ref{tbb} that the degree of $\det(\overline{I}+\overline{B})$ is actually {\it equal} to $2bk+\left(\frac{n}{2}\right)^2$. This can only happen if, for the unique index set $I_0$ for which equality is attained in \eqref{eei}, we have in fact that the degree in $a$ of $\det\left(B_{I_0\cup\{n+b+1,\dotsc,n+2b\}}^{I_0\cup\{n+b+1,\dotsc,n+2b\}}\right)$ is equal to $2bk+\left(\frac{n}{2}\right)^2$. In each of the sums on the right hand side in \eqref{eee}--\eqref{eeg}, the term corresponding to this index $I_0$ has degree $2bk+\left(\frac{n}{2}\right)^2$, while the degree in $a$ of all remaining terms is strictly smaller. This implies that  $\det(\xi^i \overline{I_0}+\overline{B})$ has degree equal to $2bk+\left(\frac{n}{2}\right)^2$, for $i=1,2,3$. The claim about the degree of the leading term in Proposition \ref{tea} follows then from the factorization of Lemma \ref{teb}. 

By Lemma \ref{teb}, for $a$ and $b$ even, the leading coefficient of $\M(S_{n,a,b,k})$ (when regarded as a polynomial in $a$) is equal to the product of the leading coefficients of the three factors. By formulas \eqref{eef}--\eqref{eeg}, for $i=1,2$, the leading coefficient in $\det(\xi^i\overline{I}+\overline{B})$ is equal to $\xi^{i|I_0|}$ times the leading coefficient in  $\det(\overline{I}+\overline{B})$. The latter can be read off directly from Theorem \ref{tbb}, and since $\xi\xi^2=1$, the claim about the leading coefficient in Proposition \ref{tea} follows.
\end{proof}

\begin{figure}[h]
\centerline{
\hfill
{\includegraphics[width=0.60\textwidth]{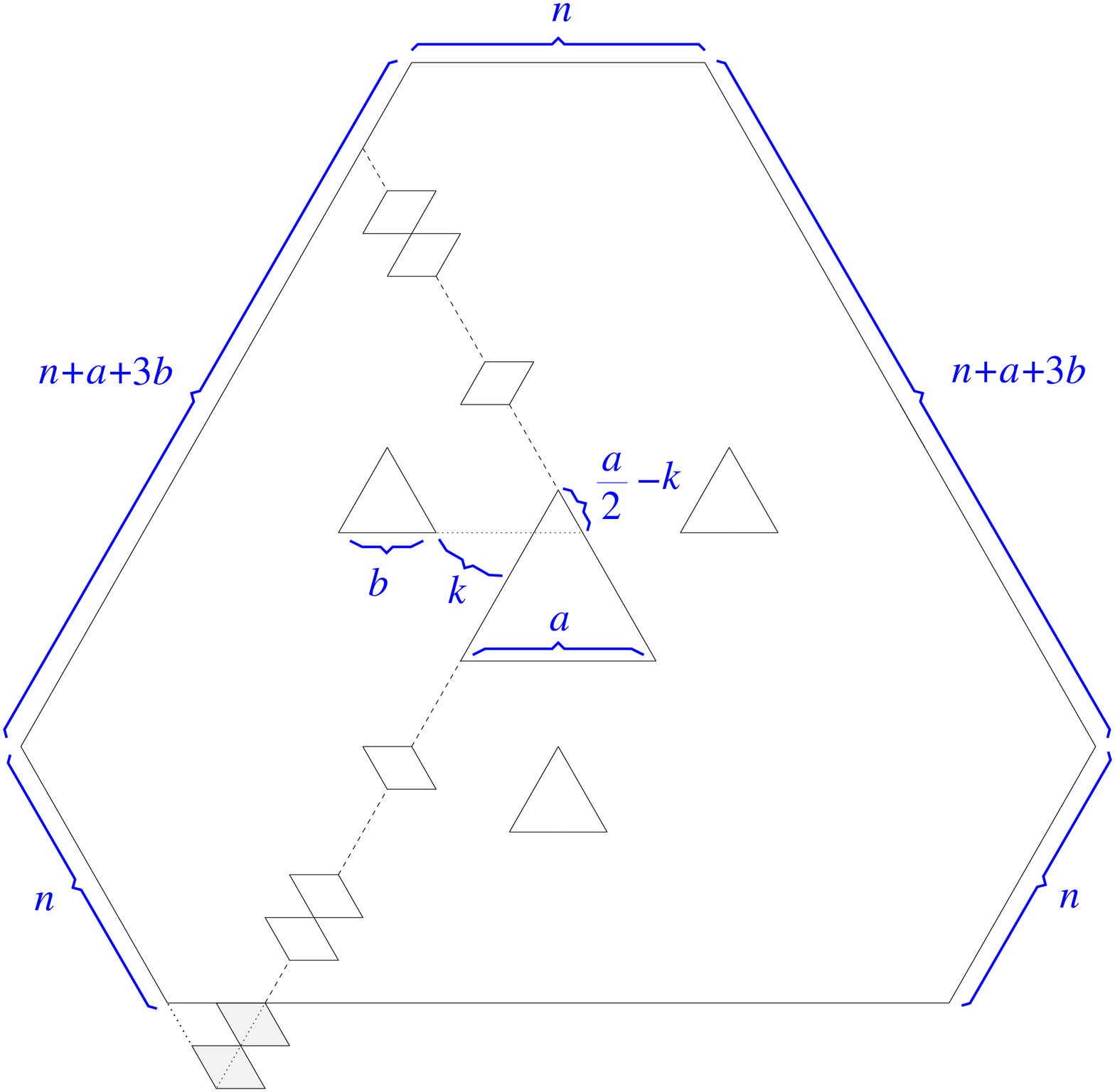}}
\hfill
}
\caption{\label{fea} Obtaining equations \eqref{eel}--\eqref{een} (illustrated on $S_{n,a,b,k}$ for $n=6$, $a=4$, $b=2$, $k=1$.}
\end{figure}

\begin{lem}
\label{tec}
Let $b$ be even. If $\overline{I}$ and $\overline{B}$ are given by \eqref{eec} and \eqref{eed}, for any index set $I=\{i_1,\dotsc,i_s\}\subset[n+b]$, the degree in $a$ of the polynomial $\det\left(\overline{B}_{I\cup\{n+b+1,\dotsc,n+2b\}}^{I\cup\{n+b+1,\dotsc,n+2b\}}\right)$ satisfies the inequality
\begin{equation}
\deg_a \det\left(\overline{B}_{I\cup\{n+b+1,\dotsc,n+2b\}}^{I\cup\{n+b+1,\dotsc,n+2b\}}\right)\leq(i_1-1)+(i_2-2)+\cdots+(i_s-s)+2bk-bs.
\label{eek}
\end{equation}

\end{lem}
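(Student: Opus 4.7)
The plan is to factor the matrix $\overline{B}$ from \eqref{eed} as $\overline{B}=ML$, where $M$ is an integer matrix independent of $a$ and $L$ concentrates all the $a$-dependence in Vandermonde-type binomials, and then to apply the Cauchy--Binet formula to the principal minor together with a direct degree count.

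The factorization will come from a uniform Chu--Vandermonde split applied to each of the four blocks of $\overline{B}$. Writing $a+i+j-2=(i-1)+(a+j-1)$, $\tfrac{a}{2}+k+i-1=(i-1)+(\tfrac{a}{2}+k)$, $n+a+b+j-1=(n+b)+(a+j-1)$, and $n+\tfrac{a}{2}+b+k=(n+b)+(\tfrac{a}{2}+k)$, and reindexing in the two bottom blocks, all four block entries become sums over an index $r\geq 0$ of products of an integer row-factor ($\binom{i-1}{r}$ for a top row~$i$, or $\binom{n+b}{r-j'+1}$ for a bottom row~$n+b+j'$) against an $a$-dependent column-factor ($\binom{a+j-1}{j-1-r}$ for a left column~$j$, or $\binom{\tfrac{a}{2}+k}{2k+j_{0}-1-r}$ for a right column~$n+b+j_{0}$). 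This is exactly the data of a factorization $\overline{B}=ML$ with $M$ of size $(n+2b)\times N$ and $L$ of size $N\times(n+2b)$; any $N\geq n+2b$ suffices, since the entries vanish outside a bounded range of~$r$.

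Setting $E:=\{n+b+1,\dotsc,n+2b\}$, Cauchy--Binet will then give
$$
\det\bigl(\overline{B}_{I\cup E}^{\,I\cup E}\bigr)=\sum_{\substack{S\subset\Z_{\geq 0}\\ |S|=s+b}}\det\bigl(M_{I\cup E}^{\,S}\bigr)\,\det\bigl(L_{S}^{\,I\cup E}\bigr),
$$
in which the $M$-factor carries no $a$-dependence. For fixed $S=\{r_{1}<\dotsb<r_{s+b}\}$, each entry $L_{r,c}$ is a polynomial in~$a$ of degree $\tau(c)-1-r$ (or vanishes identically), where $\tau(i_{q}):=i_{q}$ for $i_{q}\in I$ and $\tau(n+b+j_{0}):=2k+j_{0}$ for columns in~$E$. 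The key observation is that for every permutation $\sigma$ the degrees of the chosen factors $L_{r_{u},c_{\sigma(u)}}$ sum to the $\sigma$-independent quantity $\sum_{c\in I\cup E}\tau(c)-(s+b)-\sum_{u}r_{u}$, so
$$
\deg_{a}\det\bigl(L_{S}^{\,I\cup E}\bigr)\leq\sum_{q=1}^{s}i_{q}+2kb+\tbinom{b+1}{2}-(s+b)-\sum_{u}r_{u}.
$$

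The proof will then conclude by applying the elementary bound $\sum_{u}r_{u}\geq 0+1+\dotsb+(s+b-1)=\binom{s+b}{2}$, valid for every $(s+b)$-subset of $\Z_{\geq 0}$, and verifying by a short algebraic simplification that $\sum_{q}i_{q}+2kb+\binom{b+1}{2}-(s+b)-\binom{s+b}{2}$ equals the target $\sum_{q=1}^{s}(i_{q}-q)+2bk-bs$. The only step that requires genuine insight is finding the uniform Chu--Vandermonde split that matches simultaneously on all four blocks of $\overline{B}$: once one commits to the column-factors $\binom{a+j-1}{j-1-r}$ on the left and $\binom{\tfrac{a}{2}+k}{2k+j_{0}-1-r}$ on the right, the integer row-factors are forced, and the rest of the degree bookkeeping is automatic.
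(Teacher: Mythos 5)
Your proposal is correct, and it proves \eqref{eek} by a genuinely different route than the paper. I checked the key points: the Chu--Vandermonde splits do yield a single factorization $\overline{B}=ML$ with $M_{i,r}=\binom{i-1}{r}$ for $1\le i\le n+b$, $M_{n+b+j',r}=\binom{n+b}{r-j'+1}$ for $1\le j'\le b$, $L_{r,j}=\binom{a+j-1}{j-1-r}$ and $L_{r,n+b+j_0}=\binom{\frac{a}{2}+k}{2k+j_0-1-r}$, with $0\le r\le n+2b-1$ capturing all terms (the row factors have nonnegative integer upper entries, so the convolutions terminate and columns of $M$ beyond this range vanish, making the Cauchy--Binet sum finite); every nonzero Leibniz term of $\det\bigl(L_S^{\,I\cup E}\bigr)$ indeed has $a$-degree $\sum_{c}\tau(c)-(s+b)-\sum_u r_u$; and the final simplification $\binom{b+1}{2}-(s+b)-\binom{s+b}{2}=-\binom{s+1}{2}-bs$ turns your bound, together with $\sum_u r_u\ge\binom{s+b}{2}$, into exactly the right-hand side of \eqref{eek}. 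The paper argues instead combinatorially: it reads the minor as a tiling count $\M(R_I)$ of a piece of the fundamental domain (Gessel--Viennot, using that $b$ is even), Laplace-expands along the $b$ rows attached to the satellite to get a signed sum of products $\M(H_{I'})\M(R_{I,I'})$, computes the exact $a$-degree of $\M(H_{I'})$ from the Gelfand--Tsetlin product formula (Lemma \ref{ted}), bounds the degree of the minors of $B(n+b)$ by a multivariate Vandermonde-factor argument (Lemma \ref{tee}), and then passes from integers $a$ with $a/2\ge k$ to all $a$ by polynomial interpolation. Your route replaces all of this with Cauchy--Binet plus elementary degree bookkeeping; it needs neither the geometric interpretation, nor Lemmas \ref{ted} and \ref{tee}, nor the interpolation step, and it never uses that $b$ is even, so it gives the bound for all $b$. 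What the paper's approach buys is the combinatorial meaning of the individual terms (exact degrees for the $H_{I'}$ pieces and the link to the regions used elsewhere in Section \ref{leadingsec}); for the inequality \eqref{eek} itself your argument is leaner, and the only point worth spelling out in a final write-up is the polynomial Chu--Vandermonde identity behind the factorization and the finiteness of the Cauchy--Binet expansion.
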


\begin{proof}
Let $R_I$ be the region obtained from the fundamental region\footnote{ Under the action of rotation by 120 degrees.} of $S_{n,a,b,k}$ determined by the dashed rays in Figure \ref{fea} by removing the lozenges that straddle those rays and are at distances $i_1,\dotsc,i_s$ from the core (Figure \ref{fea} illustrates this for $I=\{3,6,7\}$). Encoding the lozenge tilings of $R_I$ by families of non-intersecting paths of lozenges that connect the horizontal unit segments on the boundary of  $R_I$ (including the $b$ such unit segments on the bottom of the left satellite), we obtain by applying the Gessel-Viennot theorem (and using that $b$ is even) that
\begin{equation}
\det\left(\overline{B}_{I\cup\{n+b+1,\dotsc,n+2b\}}^{I\cup\{n+b+1,\dotsc,n+2b\}}\right)
=
\M(R_I).
\label{eel}
\end{equation}

If we instead encode the lozenge tilings of $R_I$ by families of non-intersecting paths of lozenges that connect the {\it southwest/northeast facing} unit segments of the boundary of  $R_I$, and perform Laplace expansion in the determinant of the resulting Gessel-Viennot matrix $G$ along the rows corresponding to the $b$ starting segments on the northeastern side of the satellite, we claim that we obtain
\begin{equation}
\M(R_I)
=
\pm\sum_{\{i'_1<\cdots <i'_b\}\subset[n+b]\setminus I} (-1)^{i'_1+\cdots i'_b} \M(H_{I'})\M(R_{I,I'}),
\label{eem}
\end{equation}
where $I'=\{i'_1,\dotsc,i'_b\}$, and $H_{I'}$ and $R_{I,I'}$ are the regions described as follows. $H_{I'}$ is the region ``spanned'' by the $b$ unit segments on the northeastern side of the left satellite and the $b$ unit segments of the top dashed ray that are at distances $i'_1,\dotsc,i'_b$ from the core --- i.e., the region consisting of the union of all possible paths of lozenges that start at the former and end at the latter $b$ unit segments. $R_{I,I'}$ is the region obtained from $R_I$ by filling back in the satellite hole, and making $b$ more dents along the top dashed ray, at distances $i'_1,\dotsc,i'_b$ from the core (so $R_{I,I'}$ has $s$ dents along the bottom dashed ray, and $s+b$ dents along the top dashed ray).

Indeed, the described Laplace expansion yields first an equality like \eqref{eem} with the two tiling counts in the summand replaced by the determinants of two complementary submatrices of $G$. However, these submatrices are in their turn Gessel-Viennot matrices, and it is not hard to see that they correspond precisely to the above defined regions $H_{I'}$ and $R_{I,I'}$, when their tilings are encoded by families of non-intersecting paths of lozenges that connect the southwest/northeast facing unit segments of their boundary. Therefore, by the Gessel-Viennot theorem, each of the two determinants is equal to the corresponding tiling count, yielding \eqref{eem}.

\begin{figure}[h]
\centerline{
\hfill
{\includegraphics[width=0.60\textwidth]{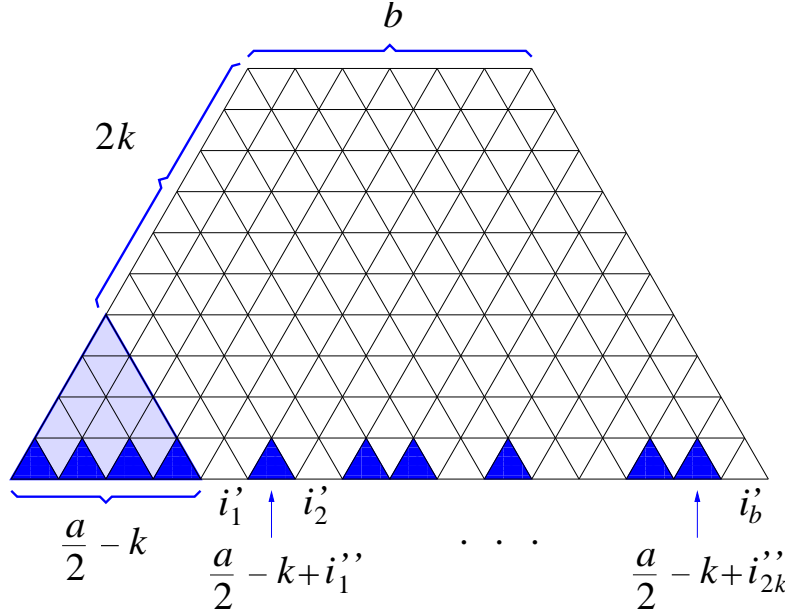}}
\hfill
}
\caption{\label{feb} The region $H_{I'}$  for $a=14$, $b=6$, $k=3$ and $I'=\{1,2,6,8,9,12\}$.}
\end{figure}

Now switch again the direction of the paths, and encode the tilings of $R_{I,I'}$ by paths of lozenges that connect the horizontal unit segments of its boundary. Things simplify if we replace $R_{I,I'}$ by $R'_{I,I'}$, the region obtained from $R_{I,I'}$ by adding a down-pointing dented triangle of side $b$ along its base, with $b$ dents along its southeastern side (see Figure \ref{fea}). Then clearly $\M(R_{I,I'})=\M(R'_{I,I'})$, and the mentioned encoding gives
\begin{equation}
\M(R_{I,I'})=\M(R'_{I,I'})
=
\det\left(B(n+b)_{I\cup\{n+b+1,\dotsc,n+2b\}}^{I\cup {I}'}\right),
\label{een}
\end{equation}
where $B(n)$ is the matrix
\begin{equation}
B(n):=
\left(
{a+i+j-2 \choose j-1}
\right)_{1\leq i,j\leq n}.
\label{eeo}
\end{equation}
%


A detailed picture of the region $H_{I'}$ --- provided $a/2\geq k$ --- is showed in Figure \ref{feb}. 
By Lemma~\ref{ted}, if we set\footnote{ We are using here the fact that $i'_b\leq 2k+b$. This is so because the unit segment at which a path of lozenges starting from the northeastern side of the left satellite crosses the top dashed ray is at distance at most $2k+b$ from the core.} $\{i''_1,\dotsc,i''_{2k}\}:=[2k+b]\setminus\{i'_1,\dotsc,i'_b\}$, we have 
\begin{equation}
\deg_a \M(H_{I'})=(i''_1-1)+(i''_2-2)+\cdots+(i''_{2k}-2k).
\label{eep}
\end{equation}

On the other hand, by Lemma \ref{tee} we have
\begin{equation}
\deg_a \det \left(B(n+b)_{I\cup\{n+b+1,\dotsc,n+2b\}}^{I\cup {I}'}\right)
\leq
(i_1-1)+\cdots+(i_s-s)+(i'_1-(s+1))+\cdots+(i'_b-(s+b)).
\label{eeq}
\end{equation}

By equations \eqref{eel}--\eqref{een}, \eqref{eep} and \eqref{eeq}, we obtain that for integers $a$ with $a/2\geq k$, the values of $\det\left(\overline{B}_{I\cup\{n+b+1,\dotsc,n+2b\}}^{I\cup\{n+b+1,\dotsc,n+2b\}}\right)$ depend polynomially on $a$, as a polynomial of degree less or equal than
\begin{align}
&
(i''_1-1)+(i''_2-2)+\cdots+(i''_{2k}-2k)+
(i_1-1)+\cdots+(i_s-s)
\nonumber
\\
&+(i'_1-(s+1))+\cdots+(i'_b-(s+b)).
\label{eer}
\end{align}
Since by definition $\{i'_1,\dotsc,i'_s,i''_1,\dotsc,i''_{2k}\}=\{1,\dotsc,2k+b\}$, the sum on the right hand side above is readily seen to be equal to $(i_1-1)+\cdots+(i_s-s)+2bk-bs$. However, as the above mentioned polynomial agrees with $\det\left(\overline{B}_{I\cup\{n+b+1,\dotsc,n+2b\}}^{I\cup\{n+b+1,\dotsc,n+2b\}}\right)$ (which is a polynomial in $a$ due to the fact that all its entries are so) on an infinite set of values (namely, all integers $a$ with $a/2\geq k$), it follows that they are identical, and thus \eqref{eek} holds.
\end{proof}

\begin{lem}
\label{ted}
Let $I'=\{i'_1,\dotsc,i'_b\}$, $1\leq i'_1<\cdots<i'_b\leq 2k+b$, and set $\{i''_1,\dotsc,i''_{2k}\}:=[2k+b]\setminus\{i'_1,\dotsc,i'_b\}$. Then the number of lozenge tilings of the region $H_{I'}$ shown in Figure $\ref{feb}$ is a polynomial in $a$ of degree
\begin{equation}
\deg_a \M(H_{I'})=(i''_1-1)+(i''_2-2)+\cdots+(i''_{2k}-2k).
\label{ees}
\end{equation}

\end{lem}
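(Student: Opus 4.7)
The plan is to apply the Lindstr\"om--Gessel--Viennot theorem to express $\M(H_{I'})$ as a $b \times b$ determinant whose entries are binomial coefficients polynomial in $a$, and then extract its degree. I will encode each lozenge tiling of $H_{I'}$ as a family of $b$ non-intersecting paths of lozenges whose starting points are the $b$ unit segments on the northeastern side of the satellite (indexed $\alpha = 1, \dotsc, b$ from bottom to top) and whose endpoints are the $b$ unit segments on the top dashed ray at positions $i'_1 < \cdots < i'_b$ from the core (indexed $\beta = 1, \dotsc, b$). From the geometry of Figure~\ref{feb}, the lattice rectangle for a single path from the $\alpha$-th start to the $\beta$-th end has total semi-perimeter of the form $a/2 + \text{const}$, and one of its two dimensions has the separable form $i'_\beta - \alpha + c$ for some fixed integer $c = c(b,k)$. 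Consequently,
\begin{equation*}
  N_{\alpha,\beta} = \binom{a/2 + p_{\alpha,\beta}}{i'_\beta - \alpha + c}
\end{equation*}
for integers $p_{\alpha,\beta}$ independent of $a$; this is a polynomial in $a$ of degree $i'_\beta - \alpha + c$.

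By Lindstr\"om--Gessel--Viennot, $\M(H_{I'}) = |\det(N_{\alpha,\beta})_{1 \le \alpha,\beta \le b}|$. Since the degree $i'_\beta - \alpha + c$ is separable in $(\alpha,\beta)$, every nonzero permutation summand in the Leibniz expansion contributes the same degree in $a$, namely
\begin{equation*}
  \sum_{\alpha=1}^{b}\bigl(i'_{\sigma(\alpha)} - \alpha + c\bigr) = \sum_{\beta=1}^{b} i'_\beta - \binom{b+1}{2} + bc.
\end{equation*}
I will then use the complementarity relation $\sum_\beta i'_\beta + \sum_j i''_j = \binom{2k+b+1}{2}$ and pin down the constant $c$ via a boundary case (e.g.\ $I'' = \{1,\dotsc,2k\}$, for which $H_{I'}$ has a tiling count of constant degree in $a$) to simplify the above to $\sum_{j=1}^{2k}(i''_j - j)$. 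This gives the upper bound $\deg_a \M(H_{I'}) \leq \sum_j (i''_j - j)$.

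For the matching lower bound, I will extract the leading coefficient in $a$. From $\binom{a/2+p}{q} = (a/2)^q/q! + O(a^{q-1})$, the leading coefficient of $\det(N_{\alpha,\beta})$ is a nonzero rational multiple of $\det\bigl(1/(i'_\beta - \alpha + c)!\bigr)_{1 \le \alpha,\beta \le b}$. After clearing factorials from each column, this reduces, up to nonzero factors, to the Vandermonde determinant $\prod_{\beta < \beta'}(i'_{\beta'} - i'_\beta)$, which is nonzero since the $i'_\beta$ are distinct integers. Hence $\deg_a \M(H_{I'}) = \sum_j (i''_j - j)$ exactly. The main obstacle will be extracting from Figure~\ref{feb} the exact separable form $i'_\beta - \alpha + c$ of the lower parameter of $N_{\alpha,\beta}$; once this is verified, the permutation-independence of the summand degree and the Vandermonde identification of the leading coefficient are routine.
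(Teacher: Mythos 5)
Your overall strategy---writing $\M(H_{I'})$ as a $b\times b$ Lindstr\"om--Gessel--Viennot determinant, using separability of the entry degrees to see that every nonzero Leibniz term has the same degree in $a$, and then exhibiting the leading coefficient as a nonzero Vandermonde-type determinant---is sound, and it is genuinely different from the paper's proof, which instead realizes $H_{I'}$ as a dented trapezoid and invokes the product formula \eqref{eet}, so that $\M(H_{I'})$ becomes an explicit product of linear factors in $a$ (see \eqref{eev}--\eqref{eex}) and the degree is simply read off. However, the step you yourself flagged as the main obstacle is exactly where the proposal fails: the entries do not have the form $\binom{a/2+p_{\alpha,\beta}}{i'_\beta-\alpha+c}$ with degree $i'_\beta-\alpha+c$. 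The endpoint at distance $i'_\beta$ from the core recedes from the satellite as the core grows, so $a$ enters \emph{both} parameters of the binomial coefficient. Concretely, in the trapezoid realization of $H_{I'}$ the $\alpha$-th start segment is joined to the endpoint indexed by $i'_\beta$ by paths of $a/2+k$ steps, of which $2k+\alpha-i'_\beta$ are of one type, so $N_{\alpha,\beta}=\binom{a/2+k}{2k+\alpha-i'_\beta}$, a polynomial in $a$ of degree $2k+\alpha-i'_\beta$ --- decreasing, not increasing, in $i'_\beta$.

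This sign reversal is not cosmetic: with your claimed form every nonzero Leibniz term would have degree $\sum_\beta i'_\beta-\binom{b+1}{2}+bc$, which increases with $\sum_\beta i'_\beta$, whereas the target $\sum_{j=1}^{2k}(i''_j-j)=2kb+\binom{b+1}{2}-\sum_\beta i'_\beta$ decreases with it; hence no constant $c$, and no boundary-case calibration, can make the two agree for all $I'$, and the complementarity step cannot be carried out as described. With the corrected entries your plan does go through essentially verbatim: the common degree is $2kb+\binom{b+1}{2}-\sum_\beta i'_\beta=\sum_j(i''_j-j)$, and the coefficient of that power of $a$ is, up to nonzero factors, $\det\bigl(1/(2k+\alpha-i'_\beta)!\bigr)_{1\le\alpha,\beta\le b}$, which after multiplying column $\beta$ by $(2k+b-i'_\beta)!$ becomes a determinant of monic polynomials of degree $b-\alpha$ in $i'_\beta$, i.e.\ a Vandermonde in the distinct $i'_\beta$, hence nonzero. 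So the gap is concretely the mis-identified entry; once it is fixed, your argument is a valid alternative to --- and in effect a transposed form of --- the paper's trapezoid computation.
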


\begin{proof}
We use the classical fact \cite{CLP,GT} that the number of lozenge tilings of the trapezoid $T_m(x_1,\dotsc,x_n)$ of base length $m$, sides of length $n$, and with unit triangular dents on its base at positions $1\leq x_1<\cdots<x_n\leq m$, is given by
\begin{equation}
\M(T_m(x_1,\dotsc,x_n))=\frac{\Delta(x_1,\dotsc,x_n)}{\Delta(1,\dotsc,n)},
\label{eet}
\end{equation}
where
\begin{equation}
\Delta(x_1,\dotsc,x_n):=\prod_{1\leq i<j\leq n} (x_j-x_i).
\label{eeu}
\end{equation}

Our region $H_{I'}$ (see Figure \ref{feb}) is obtained from the region
\begin{equation}
T_{a/2+k+b}(1,\dotsc,a/2-k,a/2-k+i''_1,\dotsc,a/2-k+i''_{2k})
\nonumber
\end{equation}
by removing the lozenges forced by the $a/2-k$ initial contiguous dents (this effectively removes a triangle of side $a/2-k$ from the left corner of the trapezoid).

Therefore, by \eqref{eet} 
we have
\begin{equation}
\M(H_{I'})=\frac{\Delta(1,\dotsc,a/2-k,a/2-k+i''_1,\dotsc,a/2-k+i''_{2k})}
{\Delta(1,\dotsc,a/2-k,a/2-k+1,\dotsc,a/2-k+{2k})}.
\label{eev}
\end{equation}

Clearly, one can write\footnote{ Here $[n]$ denotes the sequence of integers $1,2,\dotsc,n$.}
\begin{equation}
\frac{\Delta([n],n+i_1,\dotsc,n+i_l)}{\Delta([n],n+1,\dotsc,n+l)}
=
\frac{\Delta([n],n+i_1)}{\Delta([n],n+1)}
\frac{\Delta([n],n+i_2)}{\Delta([n],n+2)}
\cdots
\frac{\Delta([n],n+i_l)}{\Delta([n],n+l)}
\frac{\Delta(n+i_1,\dotsc,n+i_l)}{\Delta(n+1,\dotsc,n+l)}.
\label{eew}
\end{equation}
One readily verifies that
\begin{equation}
\frac{\Delta([n],n+t)}{\Delta([n],n+i)}
=
\frac{(n+i)_{t-i}}{(i)_{t-i}}.
\label{eex}
\end{equation}
Apply \eqref{eew} to the right hand side of  \eqref{eev}, replacing $n$ by $a/2-k$, $l$ by $2k$ and $i_j$ by $i''_j$. By \eqref{eex}, for $j=1,\dotsc,2k$, the $j$th resulting fraction in the product is
\begin{equation}
\frac{(a/2-k+j)_{i''_j-j}}{(j)_{i''_j-j}},
\nonumber
\end{equation}
and has therefore degree $i''_j-j$ in $a$. Since the last fraction in the product --- which comes from the last fraction on the right hand side of \eqref{eew} --- is a constant (as a polynomial in $a$), we obtain for $\M(H_{I'})$ an explicit expression as a product of linear factors in $a$, having the degree specified on the right hand side of \eqref{ees}. 
This completes the proof. \end{proof}

\begin{lem}
\label{tee}
Let $B(n)$ be the matrix given by \eqref{eeo}. Then for any $I,J\subset[n]$, $|I|=|J|=s$, $J=\{j_1<j_2<\cdots<j_s\}$, the degree of $\det B(n)_I^J$ as a polynomial in $a$ satisfies
\begin{equation}
\deg_a \det B(n)_I^J \leq(j_1-1)+(j_2-2)+\cdots+(j_{s}-s).
\label{eez}
\end{equation}

\end{lem}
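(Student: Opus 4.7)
\medskip

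\noindent\textbf{Proof proposal.} The plan is to factor the matrix $B(n)$ as a product in which all the $a$-dependence is concentrated in an upper-triangular factor with low-degree entries, and then invoke the Cauchy--Binet formula. The key tool is the Chu--Vandermonde convolution
\[
\binom{(i-1)+(a+j-1)}{j-1}=\sum_{k=1}^{j}\binom{i-1}{k-1}\binom{a+j-1}{j-k},
\]
which is exactly $B(n)_{i,j}=\binom{a+i+j-2}{j-1}$ on the left. This shows that $B(n)=U\,W$, where the $n\times n$ matrices
\[
U=\left(\binom{i-1}{k-1}\right)_{1\le i,k\le n},\qquad
W=\left(\binom{a+j-1}{j-k}\right)_{1\le k,j\le n}
\]
have the following properties: $U$ is lower-triangular with unit diagonal and its entries do not depend on $a$; $W$ is upper-triangular (since $\binom{a+j-1}{j-k}=0$ when $j<k$ by the convention in \eqref{ebb}), and its entry $W_{k,j}$ is a polynomial in $a$ of degree exactly $j-k$ whenever $k\le j$.

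Next I would apply the Cauchy--Binet formula to the submatrix $B(n)_I^J=(UW)_I^J$:
\[
\det B(n)_I^J=\sum_{K\subset[n],\ |K|=s}\det U_I^K\,\det W_K^J,
\]
where the sum runs over all $s$-subsets $K=\{k_1<\dots<k_s\}$ of $[n]$. Since $U$ is independent of $a$, the entire $a$-dependence of each summand lies in $\det W_K^J$.

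To bound $\deg_a\det W_K^J$, expand this determinant by the Leibniz formula. Any permutation $\sigma$ of $[s]$ contributes a term of $a$-degree at most $\sum_{l=1}^s\deg_a W_{k_l,j_{\sigma(l)}}$, which (when all the entries involved are nonzero) equals
\[
\sum_{l=1}^s\bigl(j_{\sigma(l)}-k_l\bigr)=\sum_{l=1}^s j_l-\sum_{l=1}^s k_l,
\]
independently of $\sigma$. Therefore $\deg_a\det W_K^J\le \sum_l(j_l-k_l)$. This upper bound is maximized over all $s$-subsets $K\subset[n]$ by taking $k_l=l$ for $l=1,\dots,s$, giving the overall bound
\[
\deg_a\det B(n)_I^J\le\sum_{l=1}^s(j_l-l)=(j_1-1)+(j_2-2)+\cdots+(j_s-s),
\]
which is exactly \eqref{eez}.

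There is no real obstacle here: the only nontrivial step is spotting that Chu--Vandermonde yields a factorization of $B(n)$ in which $U$ carries the row index and $W$ carries both the column index and the parameter $a$, with $W$ moreover upper-triangular of low column-degrees. Once this factorization is in hand, Cauchy--Binet combined with a one-line degree count finishes the argument.
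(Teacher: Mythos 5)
Your proof is correct, but it takes a genuinely different route from the paper's. The paper deforms $a$ into independent row variables $a_{i_1},\dotsc,a_{i_s}$, observes that every Leibniz term of the deformed minor has total degree at most $(j_1-1)+\cdots+(j_s-1)$, and then gains the missing $\binom{s}{2}$ from antisymmetry: the deformed minor vanishes whenever $a_{i_u}+i_u=a_{i_v}+i_v$, hence is divisible by $\prod_{u<v}\bigl[(a_{i_v}+i_v)-(a_{i_u}+i_u)\bigr]$ (this is the factorization \eqref{eey}), a factor which becomes a constant in $a$ after specializing all row variables back to $a$. You instead factor $B(n)=UW$ via Chu--Vandermonde, with $U=\bigl(\binom{i-1}{k-1}\bigr)$ lower-unitriangular and free of $a$, and $W=\bigl(\binom{a+j-1}{j-k}\bigr)$ upper-triangular with $\deg_a W_{k,j}=j-k$ for $k\le j$; then Cauchy--Binet applied to $B(n)_I^J=U_I^{[n]}W_{[n]}^J$ reduces everything to $\deg_a\det W_K^J\le\sum_l(j_l-k_l)$, and the gain of $\binom{s}{2}$ now comes from the constraint $k_1<\cdots<k_s$, i.e.\ $\sum_l k_l\ge\binom{s+1}{2}$. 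Both arguments are short and complete; the paper's multivariate trick makes the degree drop a visible consequence of row antisymmetry, while your factorization concentrates all $a$-dependence in a triangular factor and expresses $\det B(n)_I^J$ explicitly as a sum of products of $a$-free Pascal minors with minors of $W$, which could be exploited further (e.g.\ for leading-term information) beyond the mere degree bound needed here. One small slip: the vanishing $\binom{a+j-1}{j-k}=0$ for $j<k$ follows from the binomial-coefficient convention displayed just before Lemma~\ref{gelfand}, not from \eqref{ebb} (the Pochhammer convention, under which negative indices do not give zero); the fact itself is exactly what you need and is correct.
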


\begin{proof}
Consider the multivariate generalization $\tilde{B}(n)$ obtained from $B(n)$ by replacing $a$ by $a_i$ in all entries of row $i$, for $i=1,\dotsc,n$. Then we have
\begin{equation}
\renewcommand{\arraystretch}{2.3}
\tilde{B}(n)_I^J=
\begin{bmatrix}
{\displaystyle {a_{i_1}+i_1+j_1-2\choose j_1-1}} & {\displaystyle {a_{i_1}+i_1+j_2-2\choose j_2-1}} & . & . & . &
{\displaystyle {a_{i_1}+i_1+j_s-2\choose j_s-1}}\\
{\displaystyle {a_{i_2}+i_2+j_1-2\choose j_1-1}} & {\displaystyle {a_{i_2}+i_2+j_2-2\choose j_2-1}} & . & . & . &
{\displaystyle {a_{i_2}+i_2+j_s-2\choose j_s-1}}\\

.                &    .                        &   &   &   &   .              \\
.                &    .                        &   &   &   &   .              \\
.                &    .                        &   &   &   &   .              \\
{\displaystyle {a_{i_s}+i_s+j_1-2\choose j_1-1}} & {\displaystyle {a_{i_s}+i_s+j_2-2\choose j_2-1}} & . & . & . &
{\displaystyle {a_{i_s}+i_s+j_s-2\choose j_s-1}}\\
\end{bmatrix}.
\nonumber
\end{equation}

All entries in column $k$ are polynomials in $a_{i_1},a_{i_2},\dotsc,a_{i_s}$ of total degree $j_k-1$. It follows that in the expansion of $\det \tilde{B}(n)_I^J$ as a sum over permutations, each term, regarded as a polynomial in $a_{i_1},a_{i_2},\dotsc,a_{i_s}$, has total degree $(j_1-1)+\cdots+(j_s-1)$. Therefore, the total degree of $\det \tilde{B}(n)_I^J$ is at most $(j_1-1)+\cdots+(j_s-1)$.

However, note that when any two of $a_{i_1}+i_1,a_{i_2}+i_2,\dotsc,a_{i_s}+i_s$ are equal, there are two identical rows in $\tilde{B}(n)_I^J$, so its determinant is zero. This means that  
\begin{equation}
\det \tilde{B}(n)_I^J=P(a_{i_1},\dotsc,a_{i_s})\prod_{1\leq u<v\leq s} [(a_{i_v}+i_v)-(a_{i_u}+i_u)],
\label{eey}
\end{equation}
where $P$ is a polynomial of total degree at most $(j_1-1)+\cdots(j_s-1)-{s\choose2}=(j_1-1)+\cdots(j_s-s)$.
When specializing back all $a_i$'s to $a$, the degree in $a$ of the product on the right hand side of \eqref{eey} becomes zero. This completes the proof. \end{proof}

%
%
%


\section{Proof of  Theorem \ref{tbc}}

The correlation of the core and its three satellites could also be measured exclusively via rotationally symmetric tilings, by defining
\begin{equation}
\label{efaa}
\omega_r(a,b,k):=\lim_{n\to\infty}\frac{\M_r(S_{2n,a,b,k})}{\M_r(S_{2n,a,b,0})}.
\end{equation}

The asymptotics of this correlation is given in the following result.

\begin{prop} For non-negative $a$, $b$ and $k$ with $a$ even, we have
\label{tfa}
\begin{equation}
\label{efa}
\omega_r(a,b,k)\sim
3^{b^2/4}\frac
{G\left(\frac{b}{2}+1\right)^{\!2}}
{
\left\{
\dfrac{\Gamma(\frac{a}{6}+\frac{b}{2}+\frac13)}{\Gamma(\frac{a}{6}+\frac{b}{2}+\frac23)}
\dfrac{\Gamma(\frac{a}{6}+\frac23)}{\Gamma(\frac{a}{6}+\frac13)}
\dfrac{G(\frac{a}{2}+\frac{3b}{2}+1)}{G(\frac{a}{2}+1)}
\right\}^{2/3}
}
\,\,k^{b(a+b)/2},\ \ \ k\to\infty.
\end{equation}
%

%

\end{prop}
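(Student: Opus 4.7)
The plan is to compute both the inner limit $n\to\infty$ and the subsequent asymptotic $k\to\infty$ starting from the explicit product formula for $\M_r(S_{2n,2a,b,k})$ supplied by Theorem \ref{tbb}. Since Proposition \ref{tfa} is stated with $a$ even (meaning $a$ plays the role of what Theorem \ref{tbb} calls $2a$), a cosmetic rescaling $a\mapsto a/2$ is the first move. After that, I would form the ratio $\M_r(S_{2n,a,b,k})/\M_r(S_{2n,a,b,0})$ and cancel the many factors that do not depend on $k$: the Pochhammer $(a+2n+\tfrac{3b}{2}+\tfrac12)_n$ and the $\prod_{i=1}^{n/2-1}$ factor both cancel outright, while the prefactor ratio produces a clean $2^{k^2+k}(\tfrac{b}{2}+n-k+\tfrac12)_k/(\tfrac12+n-k)_k$ together with three Pochhammer symbols of length $k$ that are polynomials in $n$ but have limits of a definite form as $n\to\infty$.

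Second, I would take $n\to\infty$ in the remaining expression. The key inputs here are the standard asymptotics $(x+n)_m/(y+n)_m\to 1$ for fixed $m$, the asymptotic $(x)_n/(y)_n\sim n^{x-y}\,\Gamma(y)/\Gamma(x)$, and the fact that individually divergent triple products of the form $\prod_{i=1}^{n/2-k}(a+\tfrac{3b}{2}+3k+2i)_i(a+\tfrac{3b}{2}+3k+2i-1)_{i-1}$ telescope upon division by their $k=0$ counterparts. After reindexing, each such ratio reduces (using $G(z+1)=\Gamma(z)G(z)$) to a finite Barnes $G$-ratio of arguments linear in $a,b,k$, and the $n$-dependence disappears. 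This yields $\omega_r(a,b,k)$ in closed form as a product of $\Gamma$- and $G$-functions of arguments depending only on $a,b,k$.

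Third, the outer asymptotic $k\to\infty$ is extracted by Stirling in the $G$-form
\begin{equation*}
\log G(z+1)=\tfrac{z^2}{2}\log z - \tfrac{3z^2}{4}+\tfrac{z}{2}\log(2\pi)-\tfrac{1}{12}\log z+\zeta'(-1)+o(1),
\end{equation*}
combined with the Gauss multiplication formula
\begin{equation*}
\Gamma(3z)=\frac{3^{3z-1/2}}{2\pi}\,\Gamma(z)\Gamma(z+\tfrac13)\Gamma(z+\tfrac23),
\end{equation*}
which is the natural mechanism by which products of three consecutive $\Gamma$-values (arising from the Pochhammer symbols whose length grows linearly in $k$) collapse to produce the $\tfrac13,\tfrac23$ shifts displayed in the denominator of \eqref{efa}. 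Degree-counting in the Pochhammer symbols of fixed length whose upper index grows with $k$ gives the leading power $k^{b(a+b)/2}$; the three factors $3^{b^2/4}G(b/2+1)^2$ come from the Gauss formula applied to the three triple products and the Stirling-type evaluation of $\prod_{i=1}^{b}$ subfactors contributed by the last bracketed product in Theorem \ref{tbb}.

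The main obstacle is not conceptual but organizational: Theorem \ref{tbb} contains on the order of ten separate product factors inside a square, each of which needs to be tracked carefully through both limits, so that the $3$-powers from the Gauss multiplication formula, the $(2\pi)$-powers from Stirling, and the shifts of $\tfrac16$ and $\tfrac12$ in the $\Gamma$-arguments all line up exactly to produce the displayed expression. A useful consistency check at the end is the case $b=0$, where $\omega_r(a,0,k)=1$ identically and both sides of \eqref{efa} reduce to $1$; another is the $a=0$ specialization, which after the reduction $k=0$ of \cite{CEKZ} must match the known asymptotics of $\M_r$ for a hexagon with a single central hole of size $3b$.
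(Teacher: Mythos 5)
Your overall strategy---take the ratio of explicit product formulas, let $n\to\infty$, then extract the $k\to\infty$ asymptotics from Gamma and Barnes-$G$ asymptotics---is the same two-step scheme the paper uses, but your starting point is genuinely different. The paper does not invoke Theorem \ref{tbb} at all in proving Proposition \ref{tfa}: it returns to the original Lai--Rohatgi expressions, treating $b$ even via the factorization \eqref{efc} into $P_1,P_2$ (limits \eqref{efg}--\eqref{efh}, closed form \eqref{efi}, then only the elementary estimate \eqref{efk} together with the identity \eqref{eflb}), and $b$ odd via $F_1,F_2$ of \cite{LR}, where the Glaisher/Barnes asymptotics \eqref{GK} are needed and the computation splits into sub-cases according to the residue of $a$ modulo $3$. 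Starting instead from the unified formula \eqref{ebg} buys you a single computation for both parities of $b$ with no congruence sub-cases, which is a real advantage; the price is bookkeeping that is more entangled than your sketch suggests: in the ratio, the squared product $\prod_{i=1}^{k}$ of \eqref{ebg} contains Pochhammer symbols of length about $n$ with no $k=0$ counterpart, so the inner limit cannot be taken block by block, and the triple products only half telescope (writing $(C+3k+2i)_i=\Gamma(C+3k+3i)/\Gamma(C+3k+2i)$ with $C=a+\tfrac{3b}{2}$, the step-$3$ Gammas cancel against the $k=0$ product to an $n$-free quantity, but the step-$2$ Gammas do not and must be played off against the other blocks before letting $n\to\infty$). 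Also note that the prefactor ratio is $2^{k^2+k}\bigl(\tfrac{a}{2}+\tfrac{k}{2}+\tfrac12\bigr)_k\,\bigl(n-k+\tfrac12\bigr)_k\big/\bigl(\tfrac{b}{2}+n-k+\tfrac12\bigr)_k$ (your fraction is inverted and drops a factor), and that \eqref{ebg} is only the even-$n$ branch of Theorem \ref{tbb}, so as written you evaluate the limit \eqref{efaa} along a subsequence and should either run the same argument on \eqref{ebh} or justify the restriction. None of this is a conceptual gap: your identification of the Gauss triplication formula and the Stirling expansion of $G$ as the source of the $\tfrac13,\tfrac23$ shifts and of the constant $3^{b^2/4}G\bigl(\tfrac{b}{2}+1\bigr)^2$ is exactly the mechanism behind the paper's \eqref{eflb} and \eqref{GK}, and the $b=0$ sanity check is apt.
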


\begin{proof} We use the formulas for $\M_r(S_{n,a,b,k})$ provided in \cite{LR}. Since these are quite different for even and odd $b$, we distinguish between these two cases. Throughout this proof, $n$ is even (this can be assumed without loss of generality, as \eqref{efaa} only involves even values of the $n$-parameter of $S_{n,a,b,k}$).

{\it Case 1: $b$ even}. 
The $120^\circ$-rotationally invariant tilings of $S_{n,a,b,k}$ can be identified with the perfect matchings of the quotient graph $G$ of its planar dual under the action of the group generated by a $120^\circ$ rotation (see e.g. \cite{pptwo} for a detailed discussion of the case $b=0$, which readily adapts to the case of general $b$). This quotient graph $G$ is a bipartite planar graph that can be drawn in the plane so that it is symmetric about an axis. Thus, the factorization theorem of \cite{FT} can be applied to it. The resulting two ``halves'' are planar duals of regions whose lozenge tilings were enumerated by Lai and Rohatgi in \cite{LR}. The statement of the factorization theorem of \cite{FT} then yields
\begin{equation}
\label{efc}
\M_r(S_{n,a,b,k})=2^{n+b}
P_1\left(\frac{a}{2}+1,k,\frac{n}{2}-k-1,\frac{b}{2}\right)
P_2\left(\frac{a}{2}+1,k,\frac{n}{2}-k,\frac{b}{2}\right),
\end{equation}
where, cf. \cite{LR} (see equation (2.4) and (2.5) there), $P_1$ and $P_2$ are given by
\begin{align}
\label{efd}
&
P_1(x,y,z,a):=\frac{1}{2^{y+z}}\prod_{i=1}^{y+z}\frac{(2x+6a+2i)_i(x+3a+2i+\frac12)_{i-1}}{(i)_i(x+3a+i+\frac12)_{i-1}}
\nonumber
\\[5pt]
\times
&
\prod_{i=1}^a\frac
{(z+i)_{y+a-2i+1}(x+y+2z+2a+2i)_{2y+2a-4i+2}(x+3i-2)_{y-i+1}(x+3y+2i-1)_{i-1}}
{(i)_y(y+2z+2i-1)_{y+2a-4i+3}(2z+2i)_{y+2a-4i+1}(x+y+z+2a+i)_{y+a-2i+1}}
\end{align}
and
\begin{align}
\label{efe}
&\!\!\!\!\!\!
P_2(x,y,z,a):=\frac{(\frac{x}{2}+\frac{3y}{2})_a(x+2y+z+2a)_a}{2^{2a}(\frac{x}{2}+\frac{3y}{2}+z+a+\frac12)_a}
\frac{1}{2^{y+z}}
\prod_{i=1}^{y+z}\frac{(2x+6a+2i-2)_{i-1}(x+3a+2i-\frac12)_i}{(i)_i(x+3a+i-\frac12)_{i-1}}
\nonumber
\\[5pt]
\times
&\ \ \ \
\prod_{i=1}^a\frac
{(z+i)_{y+a-2i+1}(x+y+2z+2a+2i-1)_{2y+2a-4i+3}(x+3i-2)_{y-i}(x+3y+2i-1)_{i-1}}
{(i)_y(y+2z+2i-1)_{y+2a-4i+3}(2z+2i)_{y+2a-4i+1}(x+y+z+2a+i-1)_{y+a-2i+2}}.
\end{align}
Combining \eqref{efc} with its $k=0$ specialization we get
\begin{equation}
\label{eff}
\frac{\M_r(S_{n,a,b,k})}{\M_r(S_{n,a,b,0})}
=
\frac{P_1(\frac{a}{2}+1,k,\frac{n}{2}-k-1,\frac{b}{2})}{P_1(\frac{a}{2}+1,0,\frac{n}{2}-1,\frac{b}{2})}
\frac{P_2(\frac{a}{2}+1,k,\frac{n}{2}-k,\frac{b}{2})}{P_2(\frac{a}{2}+1,0,\frac{n}{2},\frac{b}{2})}.
\end{equation}
It is easy to see that, for fixed $a$, $b$ and $k$, as $n\to\infty$ we have
\begin{equation}
\label{efg}
\lim_{n\to\infty}\frac{P_1(\frac{a}{2}+1,k,\frac{n}{2}-k-1,\frac{b}{2})}{P_1(\frac{a}{2}+1,0,\frac{n}{2}-1,\frac{b}{2})}
=
\prod_{i=1}^{b/2}
\frac{1}{(i)_k}
\frac{(\frac{a}{2}+3i-1)_{k-i+1}}{(\frac{a}{2}+3i-1)_{-i+1}}
\frac{(\frac{a}{2}+3k+2i)_{i-1}}{(\frac{a}{2}+2i)_{i-1}}
\end{equation}
and
\begin{equation}
\label{efh}
\lim_{n\to\infty}\frac{P_2(\frac{a}{2}+1,k,\frac{n}{2}-k,\frac{b}{2})}{P_2(\frac{a}{2}+1,0,\frac{n}{2},\frac{b}{2})}
=
\frac{(\frac{a}{4}+\frac{3k}{2}+\frac12)_{b/2}}{(\frac{a}{4}+\frac12)_{b/2}}
\prod_{i=1}^{b/2}
\frac{1}{(i)_k}
\frac{(\frac{a}{2}+3i-1)_{k-i}}{(\frac{a}{2}+3i-1)_{-i}}
\frac{(\frac{a}{2}+3k+2i)_{i-1}}{(\frac{a}{2}+2i)_{i-1}}.
\end{equation}
Combining \eqref{efc}, \eqref{efg} and \eqref{efh} we obtain
\begin{align}
\label{efi}
&
\omega_r(a,b,k)=\lim_{n\to\infty}\frac{\M_r(S_{n,a,b,k})}{\M_r(S_{n,a,b,0})}
=
\nonumber
\\[5pt]
&\ \ \ \ \ \ \ \ \ \ \ \ 
\frac{(\frac{a}{4}+\frac{3k}{2}+\frac12)_{b/2}}{(\frac{a}{4}+\frac12)_{b/2}}
\prod_{i=1}^{b/2}\frac{1}{[(i)_k]^2}
\frac{(\frac{a}{2}+3i-1)_{k-i}(\frac{a}{2}+3i-1)_{k-i+1}}{(\frac{a}{2}+3i-1)_{-i}(\frac{a}{2}+3i-1)_{-i+1}}
\left[
\frac{(\frac{a}{2}+3k+2i)_{i-1}}{(\frac{a}{2}+2i)_{i-1}}
\right]^2.
\end{align}
To finish proving this case, we need to determine the asymptotics of the right hand side above as $k\to\infty$.

As $a$ and $b$ are fixed, we have
\begin{equation}
\label{efj}
\frac{(\frac{a}{4}+\frac{3k}{2}+\frac12)_{b/2}}{(\frac{a}{4}+\frac12)_{b/2}}
\sim
\frac{(\frac32)^{b/2}}{(\frac{a}{4}+\frac12)_{b/2}}\,k^{b/2},\ \ \ k\to\infty.
\end{equation}
Expressing the factor in the product in \eqref{efi} in terms of Gamma functions via formula \eqref{ebc}, and using that for any fixed $a$ and $b$ we have
\begin{equation}
\label{efk}
\frac{\Gamma(x+a)}{\Gamma(x+b)}\sim x^{a-b},\ \ \ x\to\infty
\end{equation}
(see e.g. equation (5.02) on page 119 of \cite{Olver}), we are led to
\begin{align}
\label{efl}
&
\frac{1}{[(i)_k]^2}
\frac{(\frac{a}{2}+3i-1)_{k-i}(\frac{a}{2}+3i-1)_{k-i+1}}{(\frac{a}{2}+3i-1)_{-i}(\frac{a}{2}+3i-1)_{-i+1}}
\left[
\frac{(\frac{a}{2}+3k+2i)_{i-1}}{(\frac{a}{2}+2i)_{i-1}}
\right]^2
\sim
\nonumber
\\[10pt]
&\ \ \ \ \ \ \ \ \ \ \ \ \ \ \ \ \ \ \ \ \ \ \ \ \ \ \ \ \ \ \ \ \ \ \ \ \ \ \ \ \ \ \ \ \ \ \ \ 
3^{2(i-1)}\left(\frac{a}{2}+2i-1\right)\frac{[\Gamma(i)]^2}{[\Gamma(\frac{a}{2}+3i-1)]^2}\,k^{a+4i-3},\ \ \ k\to\infty.
\end{align}
Using \eqref{efj} and \eqref{efl} in equation \eqref{efi}, we arrive at
\begin{equation}
\omega_r(a,b,k)\sim
3^{\frac{b^2}4}\prod_{i=1}^{b/2}\frac{\Gamma(i)^2}{\Gamma\left(\frac{a}{2}+3i-1\right)^2}
k^{ab/2+b^2/2}
,\ \ \ k\to\infty.
\label{efla}
\end{equation}
Clearly, $\prod_{i=1}^{b/2}\Gamma(i)=G(\frac{b}{2}+1)$. Furthermore, it is a straightforward exercise to show that
\begin{equation}
\label{eflb}
\prod_{i=1}^{b/2}{\Gamma\left(\frac{a}{2}+3i-1\right)}
=
\left\{
\dfrac{\Gamma(\frac{a}{6}+\frac{b}{2}+\frac13)}{\Gamma(\frac{a}{6}+\frac{b}{2}+\frac23)}
\dfrac{\Gamma(\frac{a}{6}+\frac23)}{\Gamma(\frac{a}{6}+\frac13)}
\dfrac{G(\frac{a}{2}+\frac{3b}{2}+1)}{G(\frac{a}{2}+1)}
\right\}^{1/3}.
\end{equation}
Using this, \eqref{efla} can, after some manipulation, be rewritten as \eqref{efa}.

{\it Case 2: b odd.} 
In the same fashion as we obtained \eqref{eff} for even $b$, we get for odd $b$ that
\begin{equation}
\label{efm}
\frac{\M_r(S_{n,a,b,k})}{\M_r(S_{n,a,b,0})}
=
\frac{F_1(\frac{a}{2}+1,k,\frac{n}{2}-k-1,\frac{b+1}{2})}{F_1(\frac{a}{2}+1,0,\frac{n}{2}-1,\frac{b+1}{2})}
\frac{F_2(\frac{a}{2}+1,k,\frac{n}{2}-k,\frac{b-1}{2})}{F_2(\frac{a}{2}+1,0,\frac{n}{2},\frac{b-1}{2})},
\end{equation}
where $F_1$ and $F_2$ are given by formulas (2.6) and (2.7) of \cite{LR} (these are more involved expressions than the ones for $P_1$ and $P_2$, and to keep the focus we do not list them here). Just as it was the case with equations \eqref{efg} and \eqref{efh}, it is straightforward to see that
\begin{align}
\label{efn}
&
\lim_{n\to\infty}\frac{F_1(\frac{a}{2}+1,k,\frac{n}{2}-k-1,\frac{b+1}{2})}{F_1(\frac{a}{2}+1,0,\frac{n}{2}-1,\frac{b+1}{2})}
=
\frac{1}{2^k(\frac{a}{4}+\frac{b}{2}+\frac{k}{2}+\frac12)_k}
\nonumber
\\[10pt]
&\ \ \ 
\times
\prod_{i=1}^{\lfloor (b+1)/6 \rfloor}\frac{(\frac{a}{2}+3k+6i-2)_{3(b+1)/2-9i+1}}{(\frac{a}{2}+6i-2)_{3(b+1)/2-9i+1}}
\prod_{i=1}^{\lfloor (b-1)/6 \rfloor}\frac{\frac{a}{2}+6i-1}{\frac{a}{2}+6i+3k-1}
\prod_{i=1}^{(b-1)/2}\frac{(\frac{a}{2}+3i-1)_{k-i+1}}{(\frac{a}{2}+3i-1)_{-i+1}}
\nonumber
\\[10pt]
&\ \ \ \ \ \ 
\times
\prod_{i=1}^k \frac{\Gamma(\frac{b}{2}+i+\frac32)}{\Gamma(i+\frac32)}
\frac{\Gamma(\frac{a}{2}+\frac{3b}{2}+3i-1)}{\Gamma(\frac{a}{2}+\frac{3b}{2}+3i-\frac52)}
\frac{1}{(i)_{(b+3)/2}(i+\frac32)_{(b-3)/2}}
\end{align}
and
\begin{align}
\label{efo}
&
\lim_{n\to\infty}\frac{F_2(\frac{a}{2}+1,k,\frac{n}{2}-k,\frac{b-1}{2})}{F_2(\frac{a}{2}+1,0,\frac{n}{2},\frac{b-1}{2})}
=
\frac
{\prod_{i=1}^{\lfloor (k+1)/3 \rfloor} (\frac{a}{2}+3i-1)_{3k-9i+4}}
{\prod_{i=1}^{\lfloor k/3 \rfloor} \frac{a}{2}+3k-6i+1}
\nonumber
\\[10pt]
&\ \ \ \ \ \ \ \ \ \ \ \ \ \ \ \ \ \ \ \ \ \ \ \ 
\times
\prod_{i=1}^k \frac{\Gamma(\frac{b}{2}+i+\frac32)}{\Gamma(i+\frac32)}
\frac{\Gamma(\frac{a}{2}+\frac{3b}{2}+3i-1)}{\Gamma(\frac{a}{2}+b+k+2i-1)}
\frac{1}{(i)_{(b+3)/2}(i+\frac32)_{(b-3)/2}}.
\end{align}
Looking back at \eqref{efm}, we see that we need to determine the $k\to\infty$ asymptotics of the expressions \eqref{efn} and  \eqref{efo}.

One new feature we have now is that these expressions contain products whose upper limit involves $k$. Because of this, in addition to \eqref{efk} we also need to use the asymptotics of the Barnes $G$-function given by \eqref{GK}.

The details of the calculations depend on the residue of $a$ modulo 3. If $a$ is a multiple of~3, since by assumptions $a$ is even, is is in fact a multiple of 6. Writing then $6a$ for the size of the core, and $2b+1$ for the size of the satellite (as the latter is assumed odd in the current case), we obtain after some straightforward if lengthy manipulations that the $k\to\infty$ asymptotics of the expression on the right in \eqref{efn} (in which $a$ is replaced by $6a$ and $b$ by $2b+1$) is
\begin{align}
\label{efp}
&\!\!\!\!
\frac{2^{41/36}\pi e^{1/12}}{3^{23/24}A\Gamma\left(\frac23\right)^2}
\nonumber
\\[10pt]
&
\times
\frac
{\prod_{i=0}^b\Gamma(i+\frac12)\prod_{i=1}^{a+b}\Gamma(3i-1)\prod_{i=1}^{\lfloor b/3 \rfloor}(3a+6i-1)}
{\prod_{i=0}^{a+b}\Gamma(3i+\frac12)\prod_{i=1}^{b}\Gamma(3a+2i)\prod_{i=1}^{\lfloor (b+1)/3 \rfloor}(3a+6i-2)_{3b-9i+4}}
\,3^{b^2/2}k^{3ab+b^2+3a/2+b+1/4},
\end{align}
while the $k\to\infty$ asymptotics of the expression on the right in \eqref{efo} (with $a$ is replaced by $6a$ and $b$ by $2b+1$) is
\begin{equation}
\label{efq}
\frac{2^{41/36}\pi e^{1/12}}{3^{11/24}A\Gamma\left(\frac23\right)^2}
\frac
{\prod_{i=1}^a\Gamma(3i-1)\prod_{i=0}^b\Gamma(i+\frac12)}
{\prod_{i=0}^{a+b}\Gamma(3i+\frac12)}
\,3^{b^2/2+b}k^{3ab+b^2+3a/2+b+1/4}.
\end{equation}
Then by \eqref{efm}--\eqref{efq} we obtain
\begin{align}
\omega_r(6a,2b+1,k)\sim
\sqrt{3}
&
\left[\frac{2^{41/36}\pi e^{1/12}}{3^{23/24}A\Gamma\left(\frac23\right)^2}\right]^2
\left[\frac{\prod_{i=0}^b\Gamma\left(i+\frac12\right)}{\prod_{i=0}^{a+b}\Gamma\left(3i+\frac12\right)}\right]^2
\frac
{\prod_{i=1}^{a}\Gamma(3i-1)\prod_{i=1}^{a+b}\Gamma(3i-1)}
{\prod_{i=1}^{b}\Gamma(3a+2i)}
\nonumber
\\[10pt]
\times
&
\frac
{\prod_{i=1}^{\lfloor b/3 \rfloor} 3a+6i-1}
{\prod_{i=1}^{\lfloor (b+1)/3 \rfloor} (3a+6i-2)_{3b-9i+4}}
3^{b(b+1)}k^{6ab+2b^2+3a+2b+1/2},\ \ \ k\to\infty.
\label{efr}
\end{align}
After some manipulation, using the recurrence \eqref{ebl} and the value of $G(1/2)$ given by \eqref{ebm}, one sees that \eqref{efr} can be written in terms of the Barnes $G$-function as
\begin{align}
&
\omega_r(6a,2b+1,k)\sim
3^{(2b+1)^2/4}
\frac
{G\left(\frac{2b+1}{2}+1\right)^2}
{
\left\{
\dfrac{\Gamma(a+\frac{2b+1}{2}+\frac13)}{\Gamma(a+\frac{2b+1}{2}+\frac23)}
\dfrac{\Gamma(a+\frac23)}{\Gamma(a+\frac13)}
\dfrac{G(3a+\frac{3(2b+1)}{2}+1)}{G(3a+1)}
\right\}^{2/3}
}
\nonumber
\\[10pt]
&\ \ \ \ \ \ \ \ \ \ \ \ \ \ \ \ \ \ \ \ \ \ \ \ \ \ \ \ \ \ \ \ \ \ \ \ \ \ \ \ \ \ \ \ \ \ \ \ \ \ \ \ \ \ \ \ 
\times
k^{(2b+1)(6a+2b+1)/2},\ \ \ k\to\infty.
\label{efra}
\end{align}
The remaining cases, when the size of the core is of the form $6a+2$ or $6a+4$ for some integer $a$, are handled similarly. Together they prove that for all even core sizes $a$ and odd satellite sizes $b$ we have
\begin{equation}
\label{efrb}
\omega_r(a,b,k)\sim
3^{b^2/4}
\frac
{G\left(\frac{b}{2}+1\right)^2}
{
\left\{
\dfrac{\Gamma(\frac{a}{6}+\frac{b}{2}+\frac13)}{\Gamma(\frac{a}{6}+\frac{b}{2}+\frac23)}
\dfrac{\Gamma(\frac{a}{6}+\frac23)}{\Gamma(\frac{a}{6}+\frac13)}
\dfrac{G(\frac{a}{2}+\frac{3b}{2}+1)}{G(\frac{a}{2}+1)}
\right\}^{2/3}
}
\,\,k^{b(a+b)/2},\ \ \ k\to\infty.
\end{equation}
This completes the proof.
\end{proof}

{\it Proof of Theorem \ref{tbc}.} 
Taking the limit as $n\to\infty$ in the statement of Conjecture \ref{tbaa}, it follows by \eqref{ebca} and \eqref{ebcb} that
\begin{equation}
\label{efs}
\frac{{\omega}(S_{n,a,b,k})}{{\omega}_r(S_{n,a,b,k})^3}=\left[\prod_{i=1}^k\frac{(a+6i-4)(a+3b+6i-2)}{(a+6i-2)(a+3b+6i-4)}\right]^2.
\end{equation}
One readily gets, using \eqref{efk}, that
\begin{equation}
\label{eft}
\prod_{i=1}^k\frac{(a+6i-4)(a+3b+6i-2)}{(a+6i-2)(a+3b+6i-4)}\to
\frac
{\Gamma(\frac{a}{6}+\frac23)\Gamma(\frac{a}{6}+\frac{b}{2}+\frac13)}
{\Gamma(\frac{a}{6}+\frac13)\Gamma(\frac{a}{6}+\frac{b}{2}+\frac23)},\ \ \ k\to\infty.
\end{equation}
Thus, the constant approached by the right hand side of \eqref{efs} as $k\to\infty$ precisely cancels the factors involving the Gamma function at the denominator in the cube of the right hand side of \eqref{efa}. Using \eqref{eft} and the expression \eqref{efa} for ${\omega}_r(S_{n,a,b,k})$, equation \eqref{efs} yields then formula~\eqref{ebn}.
\hfill $\square$





\section{Concluding remarks}

%










In this paper we presented an ``experiment'' designed to give the exact value of the correlation of a core and three satellite triangular holes. It relies on the first author's two decade old observation that if the satellites are enclosed symmetrically by a hexagon, the number of lozenge tilings of the resulting region is round, and on its almost decade-old generalization that brings in the presence of the core. We also presented asymptotic consequences of our exact formulas, which include the verification of the electrostatic conjecture \cite[Conjecture 1]{ov} for the system of gaps consisting of the core and satellites (it was the special case of this when the core is empty that was the original motivation for this work). In fact, combining our results with those in \cite{3b}, we obtain a verification of \cite[Conjecture 1]{ov} for arbitrary triples of bowtie gaps arranged in a triad, a satisfying generalization of our motivating case. Other consequences we present include a strengthening of \cite[Conjecture 1]{ov} (by specifying exactly the multiplicative constant), an unexpected exact way to calibrate the hexagonal lattice against the square lattice so that the monomer-monomer correlations decay at precisely the same rate, and a heuristic derivation of the special value $G(1/2)$ of the Barnes $G$-function.

%
%
%
%
%
%
%
%






\bigskip


%
%
%
%
%
%
%
%
%


\end{document}